\theoremstyle{plain}
\newtheorem{theorem}{Theorem}[section]
\newtheorem{proposition}[theorem]{Proposition}
\newtheorem{lemma}[theorem]{Lemma}
\newtheorem{corollary}[theorem]{Corollary}
\newtheorem{notation}{Notation}[section]
\theoremstyle{definition}
\theoremstyle{remark}
\numberwithin{equation}{section}
\DeclareMathOperator{\s}{s}
\DeclareMathOperator{\card}{card}
\DeclareMathOperator{\ord}{Ord}
\DeclareMathOperator{\dom}{dom}
\begin{document}
\title[On retract varieties of algebras]{On retract varieties of algebras}

\author[E. Halu\v skov\'a\ \and D. Jakub\'ikov\'a-Studenovsk\'a] %
{Em\'ilia Halu\v skov\'a* \and Danica Jakub\'ikov\'a-Studenovsk\'a**}

\newcommand{\acr}{\newline\indent}

\address{\llap{*\,}Slovak Academy of Sciences\acr
                   Mathematical Institute\acr
                   Gre\v s\'akova 5\acr
                   04101 Ko\v sice\acr
                   SLOVAKIA}
\email{ehaluska@saske.sk}

\address{\llap{**\,}P.J.\v Saf\'arik University\acr
                   Institute of Mathematics\acr
                   Jesenn\'a 5\acr
                   04154 Ko\v sice\acr
                   SLOVAKIA}
\email{danica.studenovska@upjs.sk}

\thanks{This work was supported by VEGA grants 1/0152/22 and
 2/0104/24.}

\subjclass[2010]{Primary 08A60, 08C99; Secondary 08A35.}

\keywords{retract, direct product, closed class, monounary algebra, generator}

\begin{abstract}
A  retract variety is defined as a class of algebras closed under isomorphisms, retracts and products.
Let a principal retract variety be generated by one algebra and a set-principal retract variety be generated by some set of algebras.
It is shown that 
(a) not each set-principal retract variety is principal, and 
(b) not each retract variety is set-principal.
A class of connected monounary algebras $\mathcal{S}$ such that 
every retract variety of monounary algebras is generated by algebras that have all connected components from $\mathcal{S}$  and at most two connected components are isomorphic is defined, this generating class is constructively described.
All set-principal retract varieties of monounary algebras are characterized
via degree function of monounary algebras.

\end{abstract}

\maketitle

\section{Introduction}

In many branches of mathematics, classes of structures were investigated, the classes being determined by whether they are closed with respect to certain operators.
Among the first there were the operators 
$\mathbf{H}$ (homomorphic images),
$\mathbf{S}$ (subalgebras),
$\mathbf{P}$ (direct products),
creating the class named variety, for recent papers see, e.g.,
\cite{Ad,DZ,Mas,ZP}.
For years, several modifications and extensions of operators have been studied and we recall a only two:
quasivariety (closed under isomorphisms, $\mathbf{P}$, $\mathbf{S}$ and ultraproducts),
pseudovariety (closed under $\mathbf{H}$, $\mathbf{S}$ and finite direct products). 

In 1981, Duffus and Rival \cite{DR} introduced the notion of order variety as a class of posets closed with respect to isomorphism, direct products and retracts.
From papers in this direction let us mention \cite{DL,K,W,Z}.
The notion of retract connects homomorphisms and subalgebras in some sense
(see, e.g., \cite{Cz,M,Ro,JP}).

For algebras,the notion of retract variety was introduced in \cite{jak} analogously as for posets: 
it is a class of algebras closed under operators $\mathbf{P}$ and $\mathbf{R}$ (= forming retracts and their isomorphic copies).
A retract variety generated by a class of algebras $\mathcal{K}$ is equal to 
$\mathbf{RP}(\mathcal{K})$.
Retract varieties were studied for lattice-ordered groups \cite{jak} and
monounary algebras \cite{stu1,stu4}.

We will concentrate to retract varieties of monounary algebras.
Monounary algebras are very well represented by oriented graphs.
In \cite{stu1} it was shown that the system $\mathcal{L}$ of all retract varieties  
of monounary algebras forms a proper class (i.e., it is not a set).
Moreover, ordering $\mathcal{L}$ by inclusion, there exist a chain and an antichain in 
$\mathcal{L}$ which are proper classes.

Notice that each variety of algebras is principal (= generated by one algebra).
For quasivarieties this fails to hold \cite{MRW}.
We will show that this is not valid for retract varieties as well.
Dealing with set-principal (= generated by \emph{a set} of algebras) retract varieties,
there appear questions as
\begin{itemize}
\item
Is each retract variety set-principal?
\item
Is each set-principal retract variety principal?
\item
Is a system of all set-principal retract varieties a set?
\end{itemize}
The aim of this paper is a description of set-principal retract varieties of monounary algebras.
It is done in Theorem~\ref{(th2)} by using a function that is assigned to a monounary algebra as degree function, cf.~\cite{JaStPo} or as grade function, cf. ~\cite{chkn}.
As a particular results, not directly involved in the main ones, we have defined a class
$\mathcal{S}$ of some reduced connected monounary algebras such that
\begin{enumerate}
\item 
each retract variety generated by a class of connected monounary algebras is generated by a subclass of $\mathcal{S}$, see Corollary~\ref{S}.
\item
each retract variety is generated by algebras that have all connected components from $\mathcal{S}$  and at most two connected components are isomorphic, 
see Corollary~\ref{cor 5.0.2}.
\end{enumerate}

\section{Preliminaries}

The set of all positive integers will be denoted by $\mathbb{N}$;
the set of all integers will be denoted by $\mathbb{Z}$.
The cardinality of a set $A$ will be denoted by $\card A$
and the class of all ordinals by $\ord$.
We use braces $\{ \}$ for collections of elements, not exclusively  for sets.

We will apply notations and definitions concerning (partial) monounary algebras from \cite{FaSt,JaStPo,RaTh}; let us recall some of them.

A partial monounary algebra is a pair $(A,f)$, where $A$ is a nonempty set 
and $f$ is a partial unary operation on $A$.
If domain of $f$ is equal to $A$, then $(A,f)$ is called a monounary algebra.

 A  partial monounary algebra $A=(A,f)$ is called \emph{connected} if 
\begin{center}
 for every 
$x,y\in A$ there exist $m,n\in \mathbb{N} \cup \{ 0\}$ such that \\
$f^m(x),f^n(y)$ are defined and $f^m(x)=f^n(y)$.
\end{center}
 
 \begin{notation}
\label{notation T}
{\rm{
Let us denote the class of all monounary algebras by $\mathcal{U}$ and 
 the class of all connected monounary algebras by $\mathcal{U}_c$. 
 
 Further, let 
 $\mathcal{T}$ be a class of all partial monounary algebras $(A,f)$ such that
\begin{enumerate}
\item
$(A,f)$ is connected partial monounary algebra and
\item
$\card (A\setminus \dom f)= 1.$
\end{enumerate}
If $(A,f)\in \mathcal{T}$, then we denote by $c_A$ the unique element of $A\setminus \dom f$.
}}
\end{notation}

Let $(A,f)$ be a partial monounary algebra.
We will also use $A$ for this algebra, without distinguishing between the set and the algebra, if no misunderstanding can occur.

If $x\in A$, put
$$f^{-1}(x)=\{y\in A \colon y\in \dom f\ {\rm{and\ }}f(y)=x\}$$
and by induction for $n\in \mathbb{N}, n>1$
$$f^{-n}(x)=\bigcup _{z\in f^{-(n-1)}}f^{-1}(z).$$
Further, put
$$
P(x)=\{ x\} \cup \bigcup\limits_{n\in\mathbb{N}}f^{-n}(x).
$$

It is obvious that $(P(x),f\! \upharpoonleft \!\! P(x))$ is a connected partial monounary algebra and the relationship
$\card (P(x)\setminus \dom f)\leq 1$ is valid.
Moreover,
$(P(x),f\! \upharpoonleft \!\! P(x)))\in \mathcal{T}$ if and only if $x$ is not cyclic element of $(A,f)$.

\begin{lemma}
\label{lem 2.1}
Let $(A,f)$ be a partial monounary algebra, $a,b\in A$,
$ a\neq b$ and  $f(a)=a=f(b)$. 
Then the algebra 
$(P(a),f\! \upharpoonleft \!\! P(a))$  is not isomorphic to $(P(b), f\upharpoonleft P(b))$.
\end{lemma}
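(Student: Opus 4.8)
The plan is to distinguish the two algebras by an isomorphism invariant, namely the number of elements lying outside the domain of the operation. By the remark preceding the lemma, $\card\bigl(P(x)\setminus\dom(f\!\upharpoonright\!P(x))\bigr)\in\{0,1\}$ for every $x$, and more precisely it equals $1$ exactly when $(P(x),f\!\upharpoonright\!P(x))\in\mathcal{T}$, i.e.\ exactly when $x$ is not a cyclic element of $(A,f)$. So it suffices to observe that $a$ is cyclic (since $f(a)=a$) while $b$ is not.

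For $a$: since $f(a)=a$, the element $a$ is cyclic, hence $(P(a),f\!\upharpoonright\!P(a))\notin\mathcal{T}$ and $\card\bigl(P(a)\setminus\dom(f\!\upharpoonright\!P(a))\bigr)=0$; equivalently, one checks directly from $P(a)=\{a\}\cup\bigcup_{n}f^{-n}(a)$ that $f\!\upharpoonright\!P(a)$ is total (if $x\in f^{-n}(a)$ with $n\geq 1$ then $x\in\dom f$ and $f(x)\in f^{-(n-1)}(a)\subseteq P(a)$, reading $f^{-0}(a)=\{a\}$). For $b$: because $f^{n}(b)=a\neq b$ for all $n\in\mathbb{N}$, the element $b$ is not cyclic, so $(P(b),f\!\upharpoonright\!P(b))\in\mathcal{T}$ and $\card\bigl(P(b)\setminus\dom(f\!\upharpoonright\!P(b))\bigr)=1$, the missing element being $c_{P(b)}=b$. (Concretely, $b\in\dom f$ in $(A,f)$, but $f(b)=a\notin P(b)$, since $a\in P(b)$ would force $f^{k}(a)=b$ for some $k\geq 0$, impossible as $f^{k}(a)=a\neq b$.)

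Finally, any isomorphism $\varphi$ of partial monounary algebras satisfies $x\in\dom f\iff\varphi(x)\in\dom f$, so it preserves $\card\bigl(\,\cdot\setminus\dom f\bigr)$; as this quantity is $0$ for $(P(a),f\!\upharpoonright\!P(a))$ and $1$ for $(P(b),f\!\upharpoonright\!P(b))$, the two algebras are not isomorphic. An essentially equivalent route, avoiding the remark, is to use that $(P(a),f\!\upharpoonright\!P(a))$ has a fixed point $a$, whereas a fixed point $x$ of $(P(b),f\!\upharpoonright\!P(b))$ would satisfy $x=f^{n}(x)=b$ for the $n\geq 0$ with $f^{n}(x)=b$, forcing $f(b)=b$ against $f(b)=a\neq b$; fixed points being preserved by isomorphisms, the conclusion follows. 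I do not expect a genuine obstacle; the only points requiring a little care are the degenerate reading $f^{-0}(a)=\{a\}$ in the totality check and ensuring the argument ``$a\notin P(b)$'' uses nothing beyond $f(a)=a$ and $a\neq b$.
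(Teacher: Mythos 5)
Your proof is correct and follows essentially the same route as the paper: the paper simply notes that $\mathcal{T}$ is closed under isomorphism while $(P(a),f\!\upharpoonright\!P(a))\notin\mathcal{T}$ and $(P(b),f\!\upharpoonright\!P(b))\in\mathcal{T}$, which is exactly your invariant $\card\bigl(P(x)\setminus\dom(f\!\upharpoonright\!P(x))\bigr)$ being $0$ versus $1$ (via the remark that $P(x)\in\mathcal{T}$ iff $x$ is not cyclic). You merely spell out the verifications (totality on $P(a)$, $a\notin P(b)$) that the paper leaves implicit, plus an equivalent fixed-point variant; no discrepancy.
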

\begin{proof}
The class $\mathcal{T}$ is closed under isomorphisms.
We have $(P(a),f\! \upharpoonleft \!\! P(a))\notin \mathcal{T}$ and 
$(P(b),f\! \upharpoonleft \!\! P(b))\in \mathcal{T}$.
\end{proof}
\begin{notation}
\label{notation 2.1}
{\rm{
We define the following condition ($\bigstar$):

 if 
  $f(x_{1})=f(x_{2})=f(x_{3})$ and algebras 
  $(P(x_1),f\! \upharpoonleft \!\! P(x_1)),(P(x_2),f\! \upharpoonleft \!\! P(x_2))$ and
   
  $(P(x_3),f\!\upharpoonleft \!\! P(x_3))$ are isomorphic,
 then $\card \{x_{1},x_{2},x_{3}\}\leqq 2$.

We denote

$\mathcal{T}^{\bigstar}=\{ (A,f)\in \mathcal{T} \colon$ the condition ($\bigstar$) is satisfied for all $x_1,x_2,x_3\in A\}$.
}}
\end{notation}

Let us remark  that if $(A,f)\in \mathcal{U}$ and $x_1,x_2,x_3\in A$ are such that 
$f(x_{1})=f(x_{2})=f(x_{3})$ and partial algebras 
  $(P(x_1),f\! \upharpoonleft \!\! P(x_1)),(P(x_2),f\! \upharpoonleft \!\! P(x_2))$ 
    $(P(x_3),f\!\upharpoonleft \!\! P(x_3))$ are isomorphic, 
    then  none of these three elements is a fixed element of $(A,f)$ 
    according to Lemma~\ref{lem 2.1}.
    
   We will substantially use the notion of the degree of an element $x\in A$, where 
$(A,f)$ is a (partial) monounary algebra; cf. e.g. \cite{JaStPo}.
  The degree of $x$ is an ordinal or the symbol $\infty$ and it is denoted by $\s_{f}(x)$.
  It is - roughly speaking - an expression of how far we can go back from a point $x$ in the graph of algebra $A$.
  We remind the definition.
\begin{notation}
{\rm{
Let us denote by $A^{(\infty)}$  the system of all elements $x\in A$ such that there exists a sequence $\{x_{n}\}_{n\in\mathbb N\cup\{0\}}$ of elements belonging to $A$ with the property $x_0=x$ and $x_n\in \dom f, f(x_n)=x_{n-1}$ 
for each $n\in \mathbb N$. Further, we put 
$$A^{(0)}=\{x\in A\colon f^{-1}(x)=\emptyset\}.$$ 
Now we define a set $A^{(\lambda)}\subseteq A$ for each ordinal $\lambda>0$ by induction. Assume that we have defined $A^{(\alpha)}$ for each ordinal $\alpha<\lambda$. Then we put
\begin{eqnarray*}
A^{(\lambda)}=\{x\in A-\bigcup\limits_{\alpha<\lambda}A^{(\alpha)} \colon f^{-1}(x)\subseteq \bigcup\limits_{\alpha<\lambda}A^{(\alpha)}\}.
\end{eqnarray*}
The sets $A^{(\lambda)}$ are pairwise disjoint. For each $x\in A$, either $x\in A^{(\infty)}$ or there is an ordinal $\lambda$ with $x\in A^{(\lambda)}$. 
We set
$$
\s_{f}(x)=
\begin{cases}
\lambda &  \text{if there is}\ 
\lambda \in \ord \text{such that}\ x\in A^{(\lambda)},\\
\infty & \text{otherwise}.
\end{cases}
$$
We put $\lambda<\infty$ for every ordinal $\lambda$.
}}
\end{notation}
Let us remark that for every $\lambda \in \ord$ there exists an algebra 
$(A,f)\in \mathcal{U}$ such that $s_f(x)=\lambda$ for some $x\in A$.

We will use the following notation:
\begin{notation}
\label{notation 2.3}
{\rm{
Let $(A,f)\in \mathcal{U}$. 
We denote
$$A^{'}=\{x\in A\setminus A^{(\infty)} \colon 
f(x)\in A^{(\infty)}\}.$$

Let $\beta\in\ord$. Denote 

\begin{center}
\begin{tabular}{rcl}
$\mathcal{M}_{\beta}$&$=$&$\{ (A,f)\in \mathcal{U} \colon \s_{f}(x)\leqq\beta$ for each
$x\in A\setminus A^{(\infty)}\},$\\
$\mathcal{T}_{\beta}$&$=$&$\{ (A,f)\in \mathcal{T}^{\bigstar}\colon 0<\s_{f}(c_A)\leqq\beta \}$.
\end{tabular}
\end{center}
}}
\end{notation}

\subsection{Retracts and special algebras}

A nonempty subset $M$ of $A$ is said to be \textit{retract} of $(A,f)\in \mathcal{U}$ if there is a mapping $h\colon A\to M$ such that $h$ is an endomorphism of $(A,f)$ and $h(x)=x$ for each $x\in M$. 
The mapping $h$ is then called a retraction endomorphism corresponding to the retract $M$.

The following theorem proved in \cite{stu2} is essentially applied in several proofs. 
\begin{theorem} 
\label{Thm}
Let $(A,f)\in \mathcal{U}$ and $(M,f)$ be a subalgebra of $(A,f)$. 
Then $M$ is a retract of $(A,f)$ if and only if the following conditions are satisfied:
\begin{enumerate}
\item[\rm (a)] If $y\in f^{-1}(M)$, then there is $z\in M$ such that 
\begin{center}
$f(y)=f(z)$ and $\s_{f}(y)\leq\s_{f}(z)$.
\end{center}
\item[\rm (b)] For any connected component $K$ of $(A,f)$ with $K\cap M=\emptyset$, the following conditions are satisfied.
\begin{enumerate}
\item[\rm (b1)] If $K$ contains a cycle with $d$ elements, then there is a connected component $K^{*}$ of $(A,f)$ with $K^{*}\cap M\neq \emptyset$ and there is $n\in\mathbb{N}$ such that $n|d$ and $K^{*}$ has a cycle with $n$ elements.
\item[\rm (b2)] If $K$ contains no cycle and $x_{0}$ is a fixed element of $K$, then there is $y_{0}\in M$ such that 
$\s_{f}(f^{k}(x_{0}))\leq \s_{f}(f^{k}(y_{0}))$ for each 
$k\in \mathbb{N} \cup {0}$.
\end{enumerate}
\end{enumerate}
\end{theorem}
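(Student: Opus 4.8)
The plan is to prove the two implications separately: necessity is short, while sufficiency amounts to constructing a retraction endomorphism.

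For necessity, let $h\colon A\to M$ be a retraction endomorphism. The one auxiliary fact I would record first is that a homomorphism never decreases degree, i.e. $\s_f(x)\le\s_f(h(x))$ for all $x\in A$; this is proved by transfinite induction on $\s_f(h(x))$, using that $h$ maps $f^{-1}(x)$ into $f^{-1}(h(x))$ and that an infinite backward sequence at $x$ is pushed forward to one at $h(x)$. Granting this: for (a), given $y\in f^{-1}(M)$ put $z=h(y)\in M$, so $f(z)=h(f(y))=f(y)$ and $\s_f(y)\le\s_f(z)$. For (b), note that $h$ carries a connected component $K$ into a single component $K^{*}$ and $K^{*}\cap M\ne\emptyset$ since $h(K)\subseteq M$; if $K$ has a $d$-element cycle then $h$ sends it onto a cycle of $K^{*}$ whose length $n$ divides $d$, giving (b1); and if $K$ has no cycle then $y_{0}:=h(x_{0})\in M$ satisfies $\s_f(f^{k}(x_{0}))\le\s_f(f^{k}(y_{0}))$ for every $k$, giving (b2).

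The substantial direction is sufficiency: assuming (a) and (b), build $h\colon A\to M$ that is the identity on $M$. The engine is the following consequence of (a): for $w\in M$, if $\gamma<\s_f(w)$ then $w$ has a preimage $v\in M$ with $\s_f(v)\ge\gamma$, and if $\s_f(w)=\infty$ then $w$ has a preimage in $M$ of degree $\infty$. Indeed, the definition of the sets $A^{(\lambda)}$ forces $w$ to have a preimage $u$ in $A$ with $\s_f(u)$ at least the required amount (treat $\s_f(w)$ a successor, a limit, or $\infty$ separately), and then (a) applied to $u\in f^{-1}(M)$ yields $v\in M$ with $f(v)=w$ and $\s_f(v)\ge\s_f(u)$. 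From this I would extract the tool that does all the work: whenever $z\in M$ and $\s_f(y)\le\s_f(z)$ for some $y\in A$, there is a homomorphism of $(P(y),f\!\upharpoonright\!P(y))$ into $(M,f)$ sending $y$ to $z$; it is defined by recursion on the finite distance of a point of $P(y)$ from $y$, a child $w$ of an already-mapped point being sent to a preimage in $M$ of its image of degree at least $\s_f(w)$ --- exactly what the previous sentence supplies --- so the invariant $\s_f(t)\le\s_f(h(t))$ is preserved along the way.

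It then remains to assemble $h$. For a component meeting $M$, one checks (using that $M$ is a subalgebra) that its cycle, if any, lies entirely in $M$, and that every point of the component is either in $M$ or in $P(y)$ for a unique $y\in f^{-1}(M)\setminus M$; set $h=\mathrm{id}$ on $M$ and, on each such $P(y)$, choose by (a) some $z\in M$ with $f(z)=f(y)$ and $\s_f(y)\le\s_f(z)$ and apply the tool to map $P(y)$ into $M$ with $y\mapsto z$ (the seam is fine because $f(h(y))=f(z)=f(y)=h(f(y))$). For a component $K$ disjoint from $M$: in case (b1), take the component $K^{*}$ and the divisor $n\mid d$ supplied by (b1), map the $d$-cycle of $K$ onto the ($M$-contained) $n$-cycle of $K^{*}$ by the natural homomorphism of cycles, and extend outward over the subtrees rooted on the cycle by the same recursion on distance; in case (b2), take $y_{0}\in M$ supplied by (b2), send $f^{k}(x_{0})\mapsto f^{k}(y_{0})$, and extend outward over all of $K$ likewise. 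That the resulting $h$ is an endomorphism is then a local check. I expect the main obstacle to be the bookkeeping in this sufficiency argument --- keeping the invariant $\s_f\le\s_f\circ h$ across the whole transfinite range of degrees and, in particular, handling points of degree $\infty$, where $\s_f(w)=\s_f(f(w))$ can occur and one must appeal to the $\infty$-clause of the consequence of (a); once the ``map a dominated subtree into $M$'' tool is established, the three subcases of (b) fall out uniformly.
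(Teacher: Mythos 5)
The paper does not contain its own proof of this statement: it is quoted verbatim from \cite{stu2} and used as a black box, so there is no in-paper argument to compare against. Your blind proof is correct and is essentially the standard argument for this characterization: necessity via the observation that homomorphisms never decrease the degree function $\s_f$; sufficiency by extracting from (a) that every $z\in M$ has preimages in $M$ of any prescribed degree below $\s_f(z)$ (and of degree $\infty$ when $\s_f(z)=\infty$), using this to map each set $P(y)$ into $M$ by recursion on the distance to $y$, and assembling the retraction componentwise, with (b1)/(b2) supplying the target component or target orbit for the components disjoint from $M$. The two points you flag as the delicate ones are indeed the only ones needing care in a full write-up: the $y$'s to which the subtree tool is applied must be non-cyclic (which holds because a cycle of a component meeting $M$ lies entirely in $M$, so the recursion on distance is well founded), and the $\infty$-clause of the preimage lemma must be invoked at elements of infinite degree.
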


We name several meaningful monounary algebras for this paper in the next notation.

\begin{notation}
{\rm{
Denote by $Z=(\mathbb{Z}, f)$ a monounary algebra such that $f(k)=k+1$ for all $k\in \mathbb Z$. 
For $n\in \mathbb{N}$ let $\mathbb{Z}_{n}$ be the set of all integers modulo $n$ and $\underline{n}=(\mathbb Z_{n}, f)$ be a monounary algebra  such that 
if $k\in \mathbb{Z}_{n}$ then $f(k) = k+1\ (\textrm{mod}\ n)$. 

Let us define the monounary algebra $(E,f)$ as follows:

\begin{center}
\begin{tabular}{l}
$E=\mathbb{Z}\cup\{(k,1)\colon k\in \mathbb{N}\}$,\\
$f(k)=k+1\ \ \text{for each}\ \ k\in \mathbb{Z}$,\\
$
f((k,1))=
\begin{cases}
(k-1,1) &  \text{if}\ \ k\in \mathbb{N} \ \, k>1,\\
0& \text{if}\ \ k=1.
\end{cases}
$
\end{tabular}
\end{center}

For $n\in \mathbb{N}$ we define a monounary algebra $\widehat{n}=(\widehat{n},f)$ by putting

\begin{center}
\begin{tabular}{l}
$
\widehat{n}=Z_{n}\cup\{(n,1):n\in \mathbb{N}\},
$\\
$
f(k)= k+1 (\textrm{mod}\ n)\ \ \text{for each}\ \ k\in \mathbb{Z}_{n},
$\\
$
f((k,1))=
\begin{cases}
(k-1,1) &  \text{if}\ \ k\in \mathbb{N} \ \, k>1,\\
0& \text{if}\ \ k=1.
\end{cases}
$
\end{tabular}
\end{center}
}}
\end{notation}

\subsection{Several facts about retract varieties}

\begin{notation}
{\rm{
Let $\mathcal{K}$ be a class of algebras of the same type.
The retract variety generated by $\mathcal{K}$ is denoted by $\mathbf{V}(\mathcal{K})$. 
The class of algebras whose elements are all retracts (all products) of members of $\mathcal{K}$ and their isomorphic images is denoted by $\mathbf{R}(\mathcal{K})$ ($\mathbf{P}(\mathcal{K})$).

If $n\in \mathbb N$, $A_{1},\dots, A_{n}$ be algebras of the same signature, then we write $\mathbf{V}(A_{1}\dots, A_{n})$ instead of $\mathbf{V}(\{A_{1}\dots, A_{n}\})$.

For a (partial) algebra $A$ we denote by $[A]$ the class all $B$ isomorphic to $A$.
If $B\in [A]$, then we write $B\cong A$.
}}
\end{notation}

Proposition 1.3 in \cite{stu4} says that 
$\mathbf{V}(\mathcal{K}) = \mathbf{RP}(\mathcal{K})$ for each $\mathcal{K}\subseteq \mathcal{U}$.
The proof of this Proposition works generally, therefore we obtain
\begin{lemma}
Let $\mathcal{K}$ be a class of algebras of the same type.
Then $\mathbf{V}(\mathcal{K}) = \mathbf{RP}(\mathcal{K})$.
\end{lemma}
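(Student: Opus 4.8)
The plan is to run the classical operator-calculus argument familiar from Birkhoff's $\mathbf{HSP}$ theorem, adapted to the operators $\mathbf{R}$ and $\mathbf{P}$. One inclusion is immediate: $\mathbf{V}(\mathcal{K})$ contains $\mathcal{K}$ and is closed under $\mathbf{P}$ and $\mathbf{R}$, so (using monotonicity of the operators) $\mathbf{RP}(\mathcal{K})\subseteq\mathbf{V}(\mathcal{K})$. For the reverse inclusion I would show that $\mathbf{RP}(\mathcal{K})$ is itself a retract variety containing $\mathcal{K}$; since $\mathbf{V}(\mathcal{K})$ is the smallest retract variety containing $\mathcal{K}$, this gives $\mathbf{V}(\mathcal{K})\subseteq\mathbf{RP}(\mathcal{K})$. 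That $\mathcal{K}\subseteq\mathbf{RP}(\mathcal{K})$ is clear, because every algebra is (isomorphic to) the one-factor product of itself and is its own retract under the identity endomorphism; and closure of $\mathbf{RP}(\mathcal{K})$ under isomorphisms is built into $\mathbf{R}$.

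So it remains to check that $\mathbf{RP}(\mathcal{K})$ is closed under $\mathbf{R}$ and under $\mathbf{P}$, and for this I would establish three elementary operator inclusions, each valid for algebras of any single fixed signature. First, $\mathbf{RR}\subseteq\mathbf{R}$: if $N$ is a retract of $M$ with retraction $g$ and $M$ is a retract of $A$ with retraction $h$, then $g\circ h\colon A\to N$ is again an endomorphism and fixes every point of $N$, so $N$ is a retract of $A$; together with the fact that $\mathbf{R}$ already absorbs isomorphic copies, this yields $\mathbf{R}(\mathbf{R}(\mathcal{L}))\subseteq\mathbf{R}(\mathcal{L})$ for every class $\mathcal{L}$. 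Second, $\mathbf{PP}\subseteq\mathbf{P}$: a product of a family of products is, after reindexing over the disjoint union of the index sets, isomorphic to a single product, and $\mathbf{P}$ absorbs isomorphic copies. Third — the step carrying the actual content — $\mathbf{PR}\subseteq\mathbf{RP}$: if $B_i$ is a retract of $A_i$ via a retraction $h_i$ for each $i$ in an index set $I$, then the product map $\prod_{i\in I}h_i\colon\prod_{i\in I}A_i\to\prod_{i\in I}B_i$ is an endomorphism fixing $\prod_{i\in I}B_i$ pointwise, so $\prod_{i\in I}B_i$ is a retract of $\prod_{i\in I}A_i\in\mathbf{P}(\mathcal{K})$, hence lies in $\mathbf{RP}(\mathcal{K})$.

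Combining these, $\mathbf{R}(\mathbf{RP}(\mathcal{K}))\subseteq\mathbf{RRP}(\mathcal{K})\subseteq\mathbf{RP}(\mathcal{K})$ and $\mathbf{P}(\mathbf{RP}(\mathcal{K}))\subseteq\mathbf{PRP}(\mathcal{K})\subseteq\mathbf{RPP}(\mathcal{K})\subseteq\mathbf{RP}(\mathcal{K})$, so $\mathbf{RP}(\mathcal{K})$ is a retract variety containing $\mathcal{K}$, and therefore $\mathbf{V}(\mathcal{K})=\mathbf{RP}(\mathcal{K})$. I do not expect a genuine obstacle here: this is precisely the argument of Proposition~1.3 of \cite{stu4}, and the only thing to observe is that none of the three inclusions uses anything peculiar to monounary algebras — each rests solely on the facts that endomorphisms are closed under composition and under products and that products may be reindexed. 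The one point deserving a word of care is the bookkeeping with isomorphic copies (which $\mathbf{R}$ and $\mathbf{P}$ are defined to contain already) and, if $\mathcal{K}$ happens to be a proper class, reading $\mathbf{V}(\mathcal{K})$ as the intersection of all retract varieties containing $\mathcal{K}$, which is again a retract variety.
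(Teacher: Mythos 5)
Your proposal is correct and is essentially the same argument the paper relies on: the paper gives no proof of its own but simply cites Proposition~1.3 of \cite{stu4} and observes that its proof works for arbitrary signatures, and your operator-calculus derivation (via $\mathbf{RR}\subseteq\mathbf{R}$, $\mathbf{PP}\subseteq\mathbf{P}$, $\mathbf{PR}\subseteq\mathbf{RP}$) is precisely that standard argument, correctly noting that nothing monounary-specific is used.
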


\begin{lemma}
\label{pom7}
Let $\mathcal{K},\mathcal{L}$  be a classes of algebras of the same type.
If $A\in \mathbf{V}(\mathcal{L})$ for each $A\in \mathcal{K}$,
then $\mathbf{V}(\mathcal{K})\subseteq \mathbf{V}(\mathcal{L})$.
\end{lemma}
\begin{proof}
Let $B\in \mathbf{V}(\mathcal{K})=\mathbf{RP}(\mathcal{K})$.
Then there exist a set $I$ and $A_i\in \mathcal{K}, i\in I$ such that $B$ is a retract of 
$\prod _{i\in I}A_i$.
The assumption of Lemma says that $A_i\in \mathbf{V}(\mathcal{L})$ for each $i\in I$.
It yields
$$B\in \mathbf{RPV}(\mathcal{L})=\mathbf{V}(\mathcal{L}).$$
\end{proof}

The following two  properties of retract varieties follow from definitions.
\begin{lemma}
\label{pom2}
Let $A$ be an algebra. 
If $\mathcal{V}$ is a retract variety and $A\in \mathcal{V}$,
then $\mathbf{V}(A)\subseteq \mathcal{V}$.
\end{lemma}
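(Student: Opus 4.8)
The plan is simply to unfold the definition of the generated retract variety and use closure under the two defining operators. By the Lemma that gives $\mathbf{V}(\mathcal{K}) = \mathbf{RP}(\mathcal{K})$ for every class $\mathcal{K}$ of algebras of the same type, applied to the one-element class $\{A\}$, we have $\mathbf{V}(A) = \mathbf{RP}(\{A\})$. So it suffices to show $\mathbf{RP}(\{A\}) \subseteq \mathcal{V}$.

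First I would use that $A \in \mathcal{V}$ together with closure of $\mathcal{V}$ under $\mathbf{P}$: every product of a family of copies of $A$, and every isomorphic image of such a product, belongs to $\mathcal{V}$, that is, $\mathbf{P}(\{A\}) \subseteq \mathcal{V}$. Then I would apply closure of $\mathcal{V}$ under $\mathbf{R}$ to this inclusion: from $\mathbf{P}(\{A\}) \subseteq \mathcal{V}$ we get $\mathbf{R}(\mathbf{P}(\{A\})) \subseteq \mathbf{R}(\mathcal{V}) = \mathcal{V}$, since $\mathbf{R}$ is monotone and $\mathcal{V}$ is $\mathbf{R}$-closed. Combining, $\mathbf{V}(A) = \mathbf{RP}(\{A\}) \subseteq \mathcal{V}$, as required.

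There is essentially no obstacle here; the statement is immediate from the definitions. The only point worth keeping in mind is that the operators $\mathbf{R}$ and $\mathbf{P}$ as defined in the preceding notation already include passage to isomorphic copies, and a retract variety is by definition closed under these operators, so no separate invocation of closure under isomorphism is needed — it is absorbed into $\mathbf{R}$ and $\mathbf{P}$.
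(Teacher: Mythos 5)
Your argument is correct and is exactly the routine unfolding the paper has in mind: the paper states this lemma without proof, remarking only that it ``follows from definitions,'' and your chain $\mathbf{V}(A)=\mathbf{RP}(\{A\})\subseteq\mathbf{R}(\mathcal{V})=\mathcal{V}$ via closure under $\mathbf{P}$ and $\mathbf{R}$ is the intended justification. No issues.
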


\begin{lemma}
\label{pom1}
$\mathbf{V}(\underline{1})=[\underline{1}]$ and if $n\in \mathbb N, n>1$, then
$\mathbf{V}(\underline{n})$ is the class of all monounary algebras which have every component isomorphic to $\underline{n}$.
\end{lemma}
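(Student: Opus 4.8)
First I would dispatch the claim about $\mathbf{V}(\underline{1})$. Since $\underline{1}$ is a one-element algebra, any product $\prod_{i\in I}\underline{1}$ is again a one-element algebra, hence isomorphic to $\underline{1}$; and the only nonempty subset of a one-element set is the whole set, so the only retract of $\underline{1}$ is $\underline{1}$ itself. Thus $\mathbf{V}(\underline 1)=\mathbf{RP}(\underline 1)=[\underline 1]$.

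For $n>1$, let $\mathcal{C}_n$ denote the class of all monounary algebras each of whose connected components is isomorphic to $\underline{n}$; I want to show $\mathbf{V}(\underline n)=\mathbf{RP}(\underline n)=\mathcal{C}_n$. The inclusion $\mathbf{RP}(\underline n)\subseteq\mathcal{C}_n$ has two halves. For products: a direct power $\underline n^{I}$ is a monounary algebra on which $f^n=\mathrm{id}$, and one checks directly that each of its connected components is a cycle whose length divides $n$; but a single application of $f$ has no fixed point unless $n\mid 1$, and more precisely the orbit of any element $(a_i)_{i\in I}$ has length equal to $\mathrm{lcm}$ of the orbit lengths of the coordinates, each of which is $n$ (every coordinate lies in $\underline n$, whose unique orbit has length exactly $n$ since $n>1$), so every component has exactly $n$ elements and is therefore isomorphic to $\underline n$. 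Hence $\underline n^{I}\in\mathcal{C}_n$. For retracts: if $M$ is a retract of an algebra $B\in\mathcal{C}_n$ via a retraction endomorphism $h$, then $M$ is a subalgebra, and a subalgebra of a disjoint union of copies of $\underline n$ is again a disjoint union of copies of $\underline n$ (any subalgebra meeting a component $K\cong\underline n$ must, being closed under $f$, contain a full $f$-orbit inside $K$, which is all of $K$); so $M\in\mathcal{C}_n$. Since $\mathcal{C}_n$ is visibly closed under isomorphism, we get $\mathbf{RP}(\underline n)\subseteq\mathcal{C}_n$.

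For the reverse inclusion $\mathcal{C}_n\subseteq\mathbf{RP}(\underline n)$, take any $B\in\mathcal{C}_n$, say with components $\{K_j\}_{j\in J}$, each $K_j\cong\underline n$. I would realize $B$ as a retract of the power $\underline n^{J}$. Embed $B$ into $\underline n^{J}$ by sending a point in component $K_j$ to the tuple whose $j$-th coordinate records its position in $K_j$ (fixing an isomorphism $K_j\cong\underline n$) and whose other coordinates are, say, all $0$; this is an injective endomorphism onto a subalgebra $M\cong B$ of $\underline n^{J}$. It remains to exhibit a retraction endomorphism $\underline n^{J}\to M$; here I would invoke Theorem~\ref{Thm}. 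Condition (a) is vacuous since in $\underline n^{J}$ every element is cyclic, so $f^{-1}$ of anything is a single cyclic element already accounted for; and condition (b1) is what one must verify for each component $K$ of $\underline n^{J}$ disjoint from $M$: such a $K$ is a cycle with $n$ elements, and $M$ contains the component through $(0,0,\dots)$ which is a cycle with $n$ elements, so taking $K^{*}$ to be that component and $n=d$ works (with $n\mid d$). Condition (b2) is vacuous as there are no cycle-free components. Hence $M$ is a retract of $\underline n^{J}$, so $B\cong M\in\mathbf{RP}(\underline n)$.

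The only mildly delicate point is the product computation: one must be careful that $n>1$ guarantees every coordinate orbit has length exactly $n$ (not a proper divisor), so that the $\mathrm{lcm}$ of the coordinate orbit lengths is exactly $n$ and no shorter cycles appear in the power; this is where the hypothesis $n>1$ is used, mirroring its role in the statement. Everything else is a routine application of the structure theory of monounary algebras together with the retract criterion of Theorem~\ref{Thm}.
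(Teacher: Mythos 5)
The paper offers no argument for this lemma (it is stated as following from the definitions), so your proof stands or falls on its own. Most of it is fine: the $\underline{1}$ case, the computation that every component of a nonempty power $\underline{n}^{I}$ is an $n$-cycle (using $n>1$ so that every coordinate orbit has length exactly $n$), and the observation that a retract of a disjoint union of $n$-cycles is again such a union are all correct.

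The gap is in the reverse inclusion. The map you describe, sending $x\in K_j$ to the tuple with $j$-th coordinate $\iota_j(x)$ and all other coordinates $0$, is \emph{not} a homomorphism: applying $f$ in $\underline{n}^{J}$ advances \emph{every} coordinate, so $f(\Phi(x))$ has the value $1$ in all coordinates $i\neq j$, whereas $\Phi(f(x))$ has $0$ there, and $1\neq 0$ in $\mathbb{Z}_n$ since $n>1$. In particular your set $M$ is not closed under $f$, so the appeal to Theorem~\ref{Thm} does not yet apply. The repair is standard but must be made: a component of $\underline{n}^{J}$ is the orbit $\{c+k\mathbf{1}\colon k\in\mathbb{Z}_n\}$ of a tuple $c$, and $\underline{n}^{J}$ has $n^{\card J-1}\geqq\card J$ such components (respectively $2^{\card J}>\card J$ for infinite $J$, possibly after enlarging the exponent set). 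Choose an injection assigning to each $j\in J$ a distinct component $L_j$ of the power and an isomorphism $K_j\to L_j$ (each $L_j$ is an $n$-cycle); the union $M=\bigcup_j L_j$ is then a genuine subalgebra isomorphic to $B$, it is a union of components, and your verification of conditions (a) and (b1) of Theorem~\ref{Thm} goes through verbatim for this $M$. With that substitution the proof is complete.
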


\begin{proposition}
\label{lemma 2.1}
The retract variety $\mathbf{V}(\underline{2}, \underline{3})$
is not principal.
\end{proposition}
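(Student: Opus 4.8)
The plan is to first pin down the class $\mathbf{V}(\underline{2},\underline{3})=\mathbf{RP}(\{\underline{2},\underline{3}\})$ completely and then show no single algebra can generate it. The starting point is the observation that $\underline{2}\times\underline{3}\cong\underline{6}$: since $\gcd(2,3)=1$, the product of the $2$-cycle and the $3$-cycle is one $6$-cycle, so $\underline{6}\in\mathbf{V}(\underline{2},\underline{3})$. More generally, for any nonempty family $(C_i)_{i\in I}$ with each $C_i\in\{\underline{2},\underline{3}\}$, an element $x=(x_i)$ of $P=\prod_{i\in I}C_i$ satisfies $f^{n}(x)=x$ exactly when the length of each $C_i$ divides $n$; hence $f^{d}=\mathrm{id}_P$ for $d=\operatorname{lcm}\{|C_i|:i\in I\}$, so $f$ has finite order on $P$, every element of $P$ is cyclic, and every connected component of $P$ is a cycle of length exactly $d$. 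As $|C_i|\in\{2,3\}$, the only possibilities are $d\in\{2,3,6\}$.

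Next I would pass to retracts. A retract is in particular a subalgebra; $\underline{d}$ is generated by any one of its elements, hence has no proper subalgebra; and a subalgebra of a disjoint union of $d$-cycles meets each component in $\emptyset$ or the whole component. Therefore a subalgebra of a disjoint union of $d$-cycles is again a disjoint union of $d$-cycles. Combining this with the previous paragraph and $\mathbf{V}(\underline{2},\underline{3})=\mathbf{RP}(\{\underline{2},\underline{3}\})$, every member of $\mathbf{V}(\underline{2},\underline{3})$ is isomorphic to a disjoint union of $d$-cycles for a single $d\in\{2,3,6\}$ (the one-element algebra $\underline{1}$ also occurs if the empty product is admitted). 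By Lemma~\ref{pom1} such an algebra lies in $\mathbf{V}(\underline{d})$, and the components of every member of $\mathbf{V}(\underline{d})$ are all isomorphic to $\underline{d}$.

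Finally I would argue by contradiction: suppose $\mathbf{V}(A)=\mathbf{V}(\underline{2},\underline{3})$ for some monounary algebra $A$. Then $A\in\mathbf{V}(\underline{2},\underline{3})$, so either $A\cong\underline{1}$, in which case $\mathbf{V}(A)=[\underline{1}]\not\ni\underline{2}$, a contradiction; or $A$ is a disjoint union of $d$-cycles for some fixed $d\in\{2,3,6\}$, so $A\in\mathbf{V}(\underline{d})$ and hence $\mathbf{V}(A)\subseteq\mathbf{V}(\underline{d})$ by Lemma~\ref{pom2}. But $\underline{2}$ and $\underline{3}$ both lie in $\mathbf{V}(\underline{2},\underline{3})=\mathbf{V}(A)\subseteq\mathbf{V}(\underline{d})$, and since each of them is connected this forces $\underline{2}\cong\underline{d}$ and $\underline{3}\cong\underline{d}$ at the same time, which is impossible. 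This contradiction proves that $\mathbf{V}(\underline{2},\underline{3})$ is not principal.

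The only point requiring real care is the structural description of $\mathbf{RP}(\{\underline{2},\underline{3}\})$ carried out in the first two paragraphs — verifying that direct products of the $2$- and $3$-cycle produce nothing beyond disjoint unions of cycles of length $2$, $3$, or $6$, and that this property survives passage to retracts. Once that is in hand, the contradiction is a short computation with Lemmas~\ref{pom1} and~\ref{pom2}; in particular the full retract criterion of Theorem~\ref{Thm} is not needed, only the trivial implication that a retract is a subalgebra.
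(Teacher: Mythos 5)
Your proof is correct and follows essentially the same route as the paper's: both rest on the observation that a nonempty product of copies of $\underline{2}$ and $\underline{3}$ is a disjoint union of cycles of length $2$, $3$, or $6$, that retracts (being subalgebras) preserve this structure, and then derive the contradiction via Lemmas~\ref{pom1} and~\ref{pom2}. The only difference is that you establish just the inclusion $\mathbf{V}(\underline{2},\underline{3})\subseteq\mathbf{V}(\underline{2})\cup\mathbf{V}(\underline{3})\cup\mathbf{V}(\underline{6})$, which indeed suffices for non-principality, whereas the paper also proves the reverse inclusion (and hence the full decomposition), for which it needs Theorem~\ref{Thm}.
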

\begin{proof}
We will prove two statements:
\begin{enumerate}
\item
$\mathbf{V}(\underline{2},\underline{3})=\mathbf{V}(\underline{2})\cup
\mathbf{V}(\underline{3})\cup \mathbf{V}(\underline{6})$
\item
If $A\in \mathcal{U}$ is such that 
$\mathbf{V}(A)\subseteq \mathbf{V}(\underline{2},\underline{3})$, then $\mathbf{V}(A)\in\{\mathbf{V}(\underline{2}), \mathbf{V}(\underline{3}), \mathbf{V}(\underline{6})\}$.
\end{enumerate}

Let $B\in \mathbf{V}(\underline{2},\underline{3})$. 
We have $\mathbf{V}(\underline{2},\underline{3})= \mathbf{RP}(\underline{2},\underline{3})$, 
thus there are sets $I, J$ and monounary algebras $A_{i}$ 
for each $i\in I$, $B_{j}$ for each $j\in J$ such that
 $A_{i}\cong\underline{2}$ for each $i\in I$,
 $B_{j}\cong\underline{3}$ for each $j\in J$ and
 $B$ is isomorphic to some retract $M$ of 
 $\prod\limits_{i\in I}A_{i}\times\prod\limits_{j\in J}B_{j}$.

If $I= \emptyset$, then $M$ is a retract of 
$\prod\limits_{j\in J}B_{j}$, 
hence  $B\in \mathbf{V}(\underline{3})$.  
Analogously, if $J=\emptyset$, then $B\in \mathbf{V}(\underline{2})$.

Let $I\neq\emptyset\neq J$. Then $\prod\limits_{i\in I}A_{i}\times\prod\limits_{j\in J}B_{j}$ is a disjoint union of cycles with cardinality $6$, hence $M$ is a disjoint union of $6$-element cycles, therefore $B\in \mathbf{V}(\underline{6})$. 

If $n\in \{ 2,3\}$, then $\mathbf{V}(\underline{n})\subset \mathbf{V}(\underline{2},\underline{3})$.
Suppose that $A\in \mathbf{V}(\underline{6})$. 
For each $a\in A$ let $D_{a}=\underline{2}$ and $E_{a}=\underline{3}$.
Put $D= \prod\limits_{a\in A}D_{a}\times\prod\limits_{a\in A}E_{a} $. 
Then $D$ is a disjoint union of at least $\card A$ cycles with cardinality $6$. 
We have $A\in \mathbf{R}(D)$ according to Theorem~\ref{Thm} and therefore 
$A\in \mathbf{V}(\underline{2},\underline{3})$.

It yields 
the statement (1).

Suppose that $\mathbf{V}(A)\subseteq \mathbf{V}(\underline{2},\underline{3})$.
Then $A\in \mathbf{V}(\underline{2},\underline{3})$ and thus  
$A\in \mathbf{V}(\underline{n})$ for some $n\in \{2,3,6\}$.
Therefore  $\mathbf{V}(A)\subseteq \mathbf{V}(\underline{n})$ according to Lemma~\ref{pom2}.
Further, $\underline{n}\in \mathbf{R}(A)$.
This implies
 $\mathbf{V}(\underline{n})\subseteq \mathbf{V}(A)$ according to Lemma~\ref{pom7}.
\end{proof}

\section{Auxiliary results}

The definition of degree of an element yields
\begin{lemma}
\label{pom3}
Let $(A,f)\in \mathcal{U}$ and $B\in \mathbf{V}(A)$.
If $A^{(\infty )}=A$, then $B^{(\infty )}=B$.
\end{lemma}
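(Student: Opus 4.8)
The plan is to isolate the property ``$X^{(\infty)}=X$'' and show that the class of monounary algebras having it is closed under products and under retracts; the statement then follows at once, since $B\in\mathbf{V}(A)=\mathbf{RP}(A)$ and $A$ itself has the property by hypothesis.

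First I would treat products. Suppose $\{(C_i,f_i)\}_{i\in I}$ is a family of monounary algebras with $C_i^{(\infty)}=C_i$ for every $i$, and let $(C,f)=\prod_{i\in I}(C_i,f_i)$. Given $x=(x_i)_{i\in I}\in C$, for each $i$ pick a sequence $\{x_n^{(i)}\}_{n\in\mathbb{N}\cup\{0\}}$ in $C_i$ with $x_0^{(i)}=x_i$ and $f_i(x_n^{(i)})=x_{n-1}^{(i)}$ for all $n\in\mathbb{N}$; such a sequence exists because $x_i\in C_i^{(\infty)}$. Setting $x_n=(x_n^{(i)})_{i\in I}$ we obtain a sequence in $C$ witnessing $x\in C^{(\infty)}$. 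Hence $C^{(\infty)}=C$.

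Next the retract step. Let $(C,f)$ be a monounary algebra with $C^{(\infty)}=C$, let $M$ be a retract of $C$, and let $h\colon C\to M$ be a corresponding retraction endomorphism. Note first that $M$ is a subalgebra: if $x\in M$ then $f(x)=f(h(x))=h(f(x))\in M$. Take $x\in M$. Since $x\in C^{(\infty)}$, there is a sequence $\{x_n\}_{n\in\mathbb{N}\cup\{0\}}$ in $C$ with $x_0=x$ and $f(x_n)=x_{n-1}$ for each $n\in\mathbb{N}$. Put $y_n=h(x_n)$; then $y_n\in M$, $y_0=h(x)=x$, and $f(y_n)=f(h(x_n))=h(f(x_n))=h(x_{n-1})=y_{n-1}$ for each $n\in\mathbb{N}$. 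So $\{y_n\}$ witnesses $x\in M^{(\infty)}$, computed in the subalgebra $(M,f\!\upharpoonright\! M)$. Hence $M^{(\infty)}=M$.

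Finally, the property is plainly invariant under isomorphism, and $B$ is isomorphic to a retract of a power of $A$, to which the two closure steps apply; therefore $B^{(\infty)}=B$. There is no serious obstacle here; the only points needing a little care are that the backward sequence in the product is assembled coordinatewise (using choice), and that applying $h$ to a backward sequence again yields a backward sequence, which holds precisely because $h$ commutes with $f$ and fixes $M$ pointwise.
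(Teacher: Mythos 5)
Your proof is correct and is exactly the routine verification that the paper leaves implicit (the lemma is stated there with only the remark that it follows from the definition of degree): closure of the property $X^{(\infty)}=X$ under products via coordinatewise backward sequences, and under retracts via applying the retraction endomorphism to a backward sequence. Both steps are carried out accurately, including the observation that a retract is a subalgebra.
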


Let us remark that direct products preserve  injectivity of operations. Therefore
\begin{lemma}
\label{pom5}
Let $(A,f)\in \mathcal{U}$ and $B\in \mathbf{V}(A)$.
If
the operation $f$ is injective on $A$,
then
the operation $f$ is injective on $B$.
\end{lemma}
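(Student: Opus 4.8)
The plan is to exploit the description $\mathbf{V}(A)=\mathbf{RP}(\{A\})$ recalled above, together with the elementary fact that injectivity of the fundamental operation is preserved by each of the three operators whose composition produces $\mathbf{V}(A)$: forming powers, passing to retracts (which are in particular subalgebras), and taking isomorphic copies.

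First I would record, exactly as in the remark preceding the statement, that powers preserve injectivity: if $f$ is injective on $A$ and $I$ is any set, then for $x=(x_i)_{i\in I}$, $y=(y_i)_{i\in I}$ in $A^I$ with $f(x)=f(y)$ we get $f(x_i)=f(y_i)$ for every $i\in I$, hence $x_i=y_i$ for every $i$ by injectivity of $f$ on $A$, so $x=y$. Thus $f$ is injective on $A^I$.

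Next, since $B\in\mathbf{V}(A)=\mathbf{RP}(\{A\})$, there are a set $I$ and a retract $M$ of $A^I$ with $B\cong M$. By definition a retract is a subalgebra, so $(M,f\!\upharpoonright\!M)$ is a subalgebra of $(A^I,f)$, and the restriction of the injective map $f$ (on $A^I$) to $M$ is again injective. Finally, transporting through the isomorphism $B\cong M$ shows that $f$ is injective on $B$. There is no genuine obstacle in this argument; the only points requiring care are invoking the correct closure form $\mathbf{RP}(\{A\})$ of $\mathbf{V}(A)$ and using that a retract is in particular a subalgebra, so the subalgebra step applies verbatim.
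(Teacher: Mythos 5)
Your argument is correct and matches the paper's (very terse) reasoning: the paper simply remarks that direct products preserve injectivity and leaves the retract-as-subalgebra and isomorphism steps implicit, exactly the steps you spell out via $\mathbf{V}(A)=\mathbf{RP}(\{A\})$. No issues.
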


\begin{lemma}
\label{pom6}
Let $(A,f)\in \mathcal{U}$ and $B\in \mathbf{V}(A)$.
If the operation $f$ is injective on 
$A\setminus \bigcup _{a\in A'}P(a)$, then
the operation $f$ is injective on 
$B\setminus \bigcup _{b\in B'}P(b)$.
\end{lemma}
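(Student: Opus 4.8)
The plan is to realise $B$ as a retract of a power of $A$ and to push the hypothesis through both operators. Write $\bar{A}=A\setminus\bigcup_{a\in A'}P(a)$, and likewise $\bar{X}$ for every $X\in\mathcal U$. A first observation is that $\bar{A}$ is a subalgebra of $A$: every set $P(a)$ is closed under taking $f$-preimages, so $x\notin\bigcup_{a\in A'}P(a)$ forces $f(x)\notin\bigcup_{a\in A'}P(a)$ (were $f^{k}(f(x))=a\in A'$, then $f^{k+1}(x)=a$), and $\bar{A}$ is nonempty (it contains $A^{(\infty)}$, which is nonempty as soon as $A'\neq\emptyset$, and equals $A$ when $A'=\emptyset$). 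Thus the hypothesis says precisely that $f$ is injective on the subalgebra $\bar{A}$, and hence, for any family $(A_{i})_{i\in I}$ of copies of $A$, the operation $f$ is injective on the subalgebra $C_{0}:=\prod_{i\in I}\bar{A}_{i}$ of $C:=\prod_{i\in I}A_{i}$, a direct product of injective maps being injective (this is the mechanism behind Lemma~\ref{pom5}).

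Since $B\in\mathbf V(A)=\mathbf{RP}(A)$, fix a set $I$, algebras $A_{i}\cong A$ ($i\in I$), and an isomorphism of $B$ onto a retract $M$ of $C=\prod_{i\in I}A_{i}$, with retraction endomorphism $h\colon C\to M$. An isomorphism carries $X^{(\infty)}$, $X'$ and the sets $P(x)$ onto the corresponding objects of its image, so it suffices to show that $f$ is injective on $\bar{M}$; and for that it is enough to prove
\[
\bar{M}\ \subseteq\ C_{0},
\]
since $f$ is injective on $C_{0}$.

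I would prove $\bar{M}\subseteq C_{0}$ in two steps. Step~1: $M^{(\infty)}=M\cap C^{(\infty)}$; the inclusion $\subseteq$ is clear because $M$ is a subalgebra of $C$, and for $\supseteq$ one applies $h$ to an infinite backward sequence in $C$ issuing from a point of $M\cap C^{(\infty)}$, obtaining such a sequence in $M$ from the same point --- this is the device of Lemma~\ref{pom3}. Step~2: using the identity $C^{(\infty)}=\{x\in C:\ x_{i}\in A_{i}^{(\infty)}\text{ for all }i\}$, one checks that $x\notin\bigcup_{c\in C'}P(c)$ is equivalent to the condition ``$x\in C^{(\infty)}$, or $f^{n}(x)\notin C^{(\infty)}$ for every $n$'', and one then shows that this condition forces $x_{i}\in\bar{A}_{i}$ for each $i$, i.e. $C\setminus\bigcup_{c\in C'}P(c)\subseteq C_{0}$. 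Granting both steps, let $x\in\bar{M}$: if $x\in M^{(\infty)}$ then $x\in M\cap C^{(\infty)}\subseteq C_{0}$; otherwise $x$ lies in a connected component $K$ of $M$ disjoint from $M^{(\infty)}$, so $f^{n}(x)\in K$ and $f^{n}(x)\notin C^{(\infty)}$ for all $n$ (because $K\cap C^{(\infty)}=K\cap M^{(\infty)}=\emptyset$), whence $x\in C\setminus\bigcup_{c\in C'}P(c)\subseteq C_{0}$. This yields $\bar{M}\subseteq C_{0}$ and finishes the argument.

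The step I expect to be the main obstacle is Step~2, the inclusion $C\setminus\bigcup_{c\in C'}P(c)\subseteq C_{0}$: the point is that a single coordinate $x_{i}$ may sit arbitrarily deep in a tree hanging below $A_{i}^{(\infty)}$, or inside a connected component of $A_{i}$ disjoint from $A_{i}^{(\infty)}$, so there is no uniform bound on how long it takes the forward orbit of $x$ to enter $C^{(\infty)}$. I would attack it by separating the coordinates of a point $x\notin\bigcup_{c\in C'}P(c)$ into those with $x_{i}\in A_{i}^{(\infty)}$, those with $x_{i}$ in a component of $A_{i}$ disjoint from $A_{i}^{(\infty)}$, and those with $x_{i}$ in a hanging tree, and by invoking the hypothesis to describe the components of $A$ disjoint from $A^{(\infty)}$: on such a component $f$ is injective and there is no cycle, so it is a single forward ray on which $\s_{f}$ coincides with the position. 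Combining this with the definition of $C'$ so as to exclude a coordinate whose forward orbit does reach $A_{i}^{(\infty)}$ is the technical core; the rest is bookkeeping, after which one reads off that $f$ is injective on $\bar{M}\cong B\setminus\bigcup_{b\in B'}P(b)$.
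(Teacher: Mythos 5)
Your reduction to showing $\bar M\subseteq C_{0}$ and your Step~1 (that $M^{(\infty)}=M\cap C^{(\infty)}$) are correct, and up to that point you are following the paper's (very terse) argument. But Step~2, which you yourself identify as the crux and leave unproved, is false, and no amount of bookkeeping will rescue it: a point $x\in C$ can satisfy $f^{n}(x)\notin C^{(\infty)}$ for every $n$ even though every coordinate $x_{i}$ eventually falls into $A_{i}^{(\infty)}$ --- all that is needed is that the entry times be unbounded over $i$. Concretely, let $A=\mathbb{Z}\cup\{c_{n,k}\colon n\in\mathbb{N},\,1\le k\le n\}\cup\{c^{*}_{n}\colon n\in\mathbb{N}\}$ with $f(k)=k+1$ on $\mathbb{Z}$, $f(c_{n,1})=0$, $f(c_{n,k})=c_{n,k-1}$ for $k\ge 2$, $f(c^{*}_{1})=0$ and $f(c^{*}_{n})=c_{n,n-1}$ for $n\ge 2$. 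Then $A$ is connected, $A^{(\infty)}=\mathbb{Z}$, $A\setminus\bigcup_{a\in A'}P(a)=\mathbb{Z}$, and $f$ is injective there, so the hypothesis holds. In $C=A^{\mathbb{N}}$ put $x=(c_{n,n})_{n\in\mathbb{N}}$ and $y=(c^{*}_{1},c_{2,2},c_{3,3},\dots)$: then $x\neq y$ and $f(x)=f(y)$, while for every $m$ the $n$-th coordinate of $f^{m}(x)$ lies outside $\mathbb{Z}$ whenever $n>m$, so the forward orbits of $x$ and $y$ never meet $C^{(\infty)}=\mathbb{Z}^{\mathbb{N}}$ and both points lie in $C\setminus\bigcup_{c\in C'}P(c)$. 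In particular $x\notin C_{0}$ (its first coordinate lies in $P(c_{1,1})$ with $c_{1,1}\in A'$), so Step~2 fails, and $f$ is not injective on $C\setminus\bigcup_{c\in C'}P(c)$.

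Since $C=A^{\mathbb{N}}\in\mathbf{V}(A)$, this is not merely a gap in your write-up: the lemma as stated is false, and the paper's own proof makes exactly the inference you could not justify (``therefore the operation of the algebra $D\setminus\bigcup_{d\in D'}P(d)$ is injective''), which does not follow from $D^{(\infty)}=\prod_{i}A_{i}^{(\infty)}$. What your Step~1, combined with the injectivity of $f$ on $C_{0}\supseteq C^{(\infty)}$, does prove is the weaker statement obtained by replacing the two difference sets by $A^{(\infty)}$ and $B^{(\infty)}$; and that weaker statement is all that is needed where the lemma is invoked (in Lemma~\ref{lemma 4.5} the algebras are connected with nonempty $(\infty)$-part, so there $X\setminus\bigcup_{x\in X'}P(x)=X^{(\infty)}$). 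So the salvageable content of your proposal is Step~1 plus the observation about $C_{0}$; the extra generality claimed by the lemma --- the connected components of $B$ disjoint from $B^{(\infty)}$ --- is precisely where both your Step~2 and the published proof break down.
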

\begin{proof}
Suppose that $B\in \mathbf{V}(A)$ and
\begin{center}
$B\in \mathbf{R}(D)$, where $D=\prod _{i\in I} A_i, A_i\cong A$ for all $i\in I$.
\end{center} 
An element $d=(a_i, i\in I)\in D^{(\infty)}$ if and only if $a_i \in A^{(\infty)}$ for each $i\in I$.
Therefore the operation of the algebra 
$D\setminus \bigcup _{d\in D'}P(d)$ is injective and the operation of
$B\setminus \bigcup _{b\in B'}P(b)$ is injective, too, since it is a subalgebra of $D\setminus \bigcup _{d\in D'}P(d)$.
\end{proof}

\begin{proposition}
\label{prop_SP}
Let $\mathcal{V}$ be a set-principal retract variety of monounary algebras. Then there is $\beta\in \ord$ such that 
$\mathcal{V}\subseteq \mathcal{M}_{\beta}$.
\end{proposition}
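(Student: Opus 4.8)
The plan is to read off from a set of generators of $\mathcal V$ a single ordinal bound $\beta$, and then to check that the class $\mathcal M_\beta$ is itself a retract variety; since it will contain all the generators, it will contain $\mathcal V$. Because $\mathcal V$ is set-principal, fix a \emph{set} $\mathcal K\subseteq\mathcal U$ with $\mathcal V=\mathbf V(\mathcal K)=\mathbf{RP}(\mathcal K)$. As $\mathcal K$ is a set and each of its members is a set, the ordinals $\s_f(x)$ arising from pairs $(A,f)\in\mathcal K$ with $x\in A\setminus A^{(\infty)}$ form a set of ordinals; let $\beta$ be their supremum (an ordinal, with $\beta=0$ if that set is empty). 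Then $\mathcal K\subseteq\mathcal M_\beta$ by construction, so it suffices to show $\mathbf R(\mathcal M_\beta)\subseteq\mathcal M_\beta$ and $\mathbf P(\mathcal M_\beta)\subseteq\mathcal M_\beta$ (closure under isomorphism being clear); this gives $\mathcal V=\mathbf{RP}(\mathcal K)\subseteq\mathbf{RP}(\mathcal M_\beta)\subseteq\mathcal M_\beta$.

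Both closure statements rest on the recursion $\s_f(x)=\sup\{\,\s_f(y)+1\colon y\in f^{-1}(x)\,\}$, valid for $x\notin A^{(\infty)}$ with $\sup\emptyset=0$; one first notes that for such $x$ each $y\in f^{-1}(x)$ has $f(y)=x\notin A^{(\infty)}$, so $\s_f(y)$ is an ordinal strictly below $\s_f(x)$. The product lemma I need is: if $D=\prod_{i\in I}A_i$, $d=(a_i)\in D$, and $a_j\notin A_j^{(\infty)}$ for some $j$, then $d\notin D^{(\infty)}$ and $\s_f(d)\le\s_f(a_j)$. This follows by transfinite induction on $\s_f(a_j)$ from $f^{-1}(d)=\prod_i f^{-1}(a_i)$: any preimage $e$ of $d$ projects in coordinate $j$ to a preimage of $a_j$, which therefore lies outside $A_j^{(\infty)}$ with strictly smaller degree, so by induction $e\notin D^{(\infty)}$ and $\s_f(e)\le\s_f(\pi_j(e))<\s_f(a_j)$; hence $d$ has no preimage of infinite degree, so $d\notin D^{(\infty)}$ and $\s_f(d)=\sup\{\s_f(e)+1\}\le\s_f(a_j)$. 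Combined with the remark already used in the proof of Lemma~\ref{pom6}, namely that $d\in D^{(\infty)}$ exactly when $a_i\in A_i^{(\infty)}$ for every $i$, this yields $\mathbf P(\mathcal M_\beta)\subseteq\mathcal M_\beta$: any $d\in D\setminus D^{(\infty)}$ has a coordinate $a_j\notin A_j^{(\infty)}$, and then $\s_f(d)\le\s_f(a_j)\le\beta$.

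For $\mathbf R(\mathcal M_\beta)\subseteq\mathcal M_\beta$, suppose $M$ is a retract of $A\in\mathcal M_\beta$ with retraction endomorphism $h$. I claim $M^{(\infty)}=M\cap A^{(\infty)}$: the inclusion $\subseteq$ is clear, and if $x\in M\cap A^{(\infty)}$ admits an infinite backward sequence $x=y_0,y_1,\dots$ in $A$, then $x=h(y_0),h(y_1),\dots$ is such a sequence in $M$. Also, for any subalgebra $M$ of $A$ and any $z\in M$ with $z\notin A^{(\infty)}$, the degree of $z$ computed in $M$ is at most its degree in $A$ (transfinite induction, using $f^{-1}_M(z)\subseteq f^{-1}_A(z)$). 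Hence for $x\in M\setminus M^{(\infty)}=M\setminus A^{(\infty)}$ the degree of $x$ in $M$ is $\le\s_f(x)\le\beta$, so $M\in\mathcal M_\beta$; since $\mathcal M_\beta$ is plainly isomorphism-closed, $\mathbf R(\mathcal M_\beta)\subseteq\mathcal M_\beta$.

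I expect the product lemma to be the main point to get right: the induction must deliver simultaneously that $d\notin D^{(\infty)}$ and the sharp inequality $\s_f(d)\le\s_f(a_j)$, and one must use that a single coordinate outside $A_j^{(\infty)}$ already suffices — the remaining coordinates, possibly lying in their own $A_i^{(\infty)}$, only shrink preimage sets and never raise $\s_f$. The remaining ingredients (the supremum being an ordinal, the subalgebra inequality, and $M^{(\infty)}=M\cap A^{(\infty)}$) are routine once the recursion for $\s_f$ is available.
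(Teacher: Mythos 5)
Your proof is correct and follows essentially the same route as the paper: take $\beta$ as an ordinal bound on the (non-$\infty$) degrees occurring in a generating set, then observe that degrees do not increase under products (a coordinate outside $A_j^{(\infty)}$ bounds the degree of the tuple) or under passing to a retract. You spell out the two closure facts that the paper states without proof (the product degree bound and the step from $D$ to its retract $A$, where your observation $M^{(\infty)}=M\cap A^{(\infty)}$ is exactly what is needed to make ``since $A\subseteq D$'' legitimate), so your write-up is if anything more complete.
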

\begin{proof}
Suppose that $\mathcal{V}=\mathbf{V}(\{A_{i}\colon i\in I\})$, where $I$ is a set,
$A_i\in \mathcal{U}$ for each $i\in I$. 
Let $$S=\{\s_{f}(x)\colon x\in A_{i}, i\in I\} \setminus \{\infty\}.$$
There exists $\beta\in \ord$ such that 
\begin{enumerate}
\item $\beta\geqq \alpha$ for each $\alpha\in S$.
\end{enumerate}
Assume that $A\in \mathcal{V}$ and 
$D\in \mathbf{P}(\{A_{i}\colon i\in I\})$ is such that $A\in \mathbf{R}(D)$.
Let 
\begin{center}
$D=\prod\limits_{j\in J}B_{j}$, where 
$B_j \in \{A_{i}\colon i\in I\}$
for each $j\in J$.
\end{center}
Take $x\in D\setminus D^{(\infty)}$. 
If $j\in J$, then $\s_{f}(x(j))\in S\cup\{\infty\}$ and there is $j_{0}\in J$ with $\s_{f}(x(j_{0}))\neq\infty$, therefore (1) yields
 $\s_{f}(x(j_{0}))\leqq\beta$.
From this we obtain
 $\s_{f}(x)\leqq\beta$.
Therefore $A\in \mathcal{M}_{\beta}$ since $A\subseteq D$.
\end{proof}

\subsection{On classes $\mathcal{T}_{\beta}$}

Now we will deal with the class $\mathcal{T}$. 
If $(A,f)\in \mathcal{T}$, then $(A,f)$ is connected and contains 
the unique element $c_A$ such that the operation $f$ is not defined in it, see Notation~\ref{notation T}.
Remind that classes $\mathcal{T}_{\beta}$ for $\beta \in \ord$ were introduced in Notation~\ref{notation 2.3}.

\begin{lemma}
\label{lemma 3.1}
Let $n\in \mathbb{N}$. Then $\mathcal{T}_{n}/\cong$ is a finite set.
\end{lemma}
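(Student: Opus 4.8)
The plan is to show that every algebra in $\mathcal{T}_n$ is, up to isomorphism, determined by finitely many pieces of finitary data, so that there are only finitely many isomorphism types. Recall that an algebra $(A,f)\in\mathcal{T}_n$ is a connected partial monounary algebra with a unique element $c_A$ outside $\dom f$ and with $0<\s_f(c_A)\leqq n$, and that it satisfies the condition ($\bigstar$). The key structural observation is that such an algebra is a tree rooted at $c_A$ (edges pointing toward $c_A$), and $\s_f(c_A)\leqq n$ forces the whole tree to have ``depth'' at most $n$: indeed, for any $x\in A$, connectedness gives $f^m(x)=c_A$ for some $m$, and since $\s_f$ is non-decreasing along $f$ while $\s_f(c_A)\leqq n$, one gets $\s_f(x)\leqq n$ for all $x\in A$; combined with the definition of the sets $A^{(\lambda)}$ this bounds how long a strictly back-going chain can be.

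The argument would proceed by induction on $\beta=\s_f(c_A)\in\{1,\dots,n\}$, proving the stronger statement that for each $\beta\leqq n$ there are only finitely many isomorphism types of algebras $(A,f)\in\mathcal{T}^{\bigstar}$ with $\s_f(c_A)=\beta$ (equivalently, finitely many isomorphism types of rooted trees of this form with root-degree $\beta$). First I would observe that, writing $c=c_A$, the algebra is reconstructed from the root together with, for each element $x$, the multiset of isomorphism types of the subtrees $(P(y),f\!\upharpoonleft\!\!P(y))$ hanging at the preimages $y\in f^{-1}(x)$. Each such $(P(y),f\!\upharpoonleft\!\!P(y))$ again lies in $\mathcal{T}^{\bigstar}$ and, because $\s_f$ strictly drops when we pass from $x$ to a preimage $y$ with $f^{-1}(y)\neq\emptyset$, we have $\s_f(c_{P(y)})=\s_f(y)<\s_f(x)\leqq\beta$ whenever $y$ is not a leaf. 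So the subtrees hanging below the root are of two kinds: single leaves (one isomorphism type), and algebras in $\mathcal{T}^{\bigstar}$ of strictly smaller root-degree, of which there are only finitely many types by the induction hypothesis. The crucial point where condition ($\bigstar$) enters is that it caps multiplicities: at every vertex, each isomorphism type of hanging subtree occurs at most twice among the preimages. Hence the multiset of subtree-types at the root is a function from a finite set of types into $\{0,1,2\}$, giving finitely many possibilities, and therefore finitely many isomorphism types at level $\beta$.

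The main obstacle is making the ``bound the branching at every vertex, not just the root'' step precise: ($\bigstar$) is stated for triples $x_1,x_2,x_3$ with a common value $f(x_i)$, i.e. it bounds multiplicities of isomorphic subtrees among siblings, but one must check this is exactly what the inductive reconstruction needs at each internal vertex, and that passing to $(P(y),f\!\upharpoonleft\!\!P(y))$ preserves membership in $\mathcal{T}^{\bigstar}$ (it does, since ($\bigstar$) is a local condition inherited by $P(y)$, and $P(y)$ is connected with $\card(P(y)\setminus\dom f)\leqq 1$, the value being $1$ precisely when $y$ is not cyclic, which holds here as $A$ is acyclic). Once the reconstruction is set up, one would formalize it via a ``type function'' $\tau$ sending each $x\in A$ to the pair consisting of whether $f^{-1}(x)=\emptyset$ and, if not, the map assigning to each isomorphism type $t$ of a proper hanging subtree the number $\min(2,\#\{y\in f^{-1}(x): (P(y),f\!\upharpoonleft\!\!P(y))\cong t\})$ --- except that ($\bigstar$) already makes this $\min$ unnecessary. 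Two algebras in $\mathcal{T}_n$ with the same type at their roots are isomorphic, by an induction that builds the isomorphism level by level; and the set of possible root-types is finite by the induction on $\beta$ sketched above. This yields $\card(\mathcal{T}_n/\!\cong)<\aleph_0$, completing the proof.
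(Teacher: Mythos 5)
Your proposal is correct and follows essentially the same route as the paper: induct on the degree of the root, decompose the algebra at $c_A$ into the subalgebras $(P(y),f\!\upharpoonleft\!\!P(y))$ for $y\in f^{-1}(c_A)$, apply the induction hypothesis to bound the number of isomorphism types of these subalgebras, and use ($\bigstar$) to cap the multiplicity of each type at two. Your additional care about leaves, about $P(y)$ inheriting membership in $\mathcal{T}^{\bigstar}$, and about why equal root-types force isomorphism only fleshes out steps the paper leaves implicit.
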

\begin{proof}
Let us prove the assertion by induction.
\par
(I) Let $n=1$. Suppose that $(A,f)\in \mathcal{T}_{1}$. Then Notation~\ref{notation 2.3} implies that $A=\{c_{A}\}\cup f^{-1}(c_{A})$. 
According to 
($\bigstar$)
we obtain that $\card f^{-1}(c_{A})\leqq 2$. Thus $\card (\mathcal{T}_{1}/\cong)=2$.
\newline\par
(II) Let $n\in\mathbb{N}$, $n>1$ and suppose that $\mathcal{T}_{m}/\cong$ is a finite set for each $m\in\mathbb{N}$, $m < n$. Let $(A,f)\in \mathcal{T}_{n}$. By the definition of $\s_{f}$ we get
\begin{enumerate}
\item [(1)] if $y\in f^{-1}(c_{A})$, then $\s_{f}(y)\in \{m\in\mathbb{N} \colon m < n\}$.
\end{enumerate}
Hence,
\begin{enumerate}
\item [(2)] if $y\in f^{-1}(c_{A})$, then $(P(y),f)$ belongs to $\mathcal{T}_{m}$ for some $m\in\mathbb{N}$, $m < n$.
\end{enumerate}
Denote, for $m\in\mathbb{N}$, $m < n$,
\begin{align*}
A_{m}=\{(P(y),f) \colon y\in f^{-1}(c_{A}), \s_{f}(y)=m\}.
\end{align*}
By ($\bigstar$) and by the induction assumption, there are finitely many possibilities for $A_{m}$ ($m<n$), thus there are only finitely many non-isomorphic $(A,f)$ in $\mathcal{T}_{n}$.
\end{proof}

\begin{notation}
\label{tau-beta-notation}
Let $\beta\in\ord$. We define $\tau(\beta)$ by induction as follows:
\begin{enumerate}
\item[\rm{(i)}] if $\beta$ is finite, then $\tau(\beta)=\aleph_{0}$; 
\item[\rm{(ii)}] if $\beta$ is infinite, then $\tau(\beta)=\sup\{2^{\tau(\alpha)} \colon \alpha\in\ord, \alpha<\beta\}$.
\end{enumerate}
\end{notation}

\begin{lemma}
\label{tau-beta-lemma}
Let $\beta\in\ord$. Then $\mathcal{T}_{\beta}/\cong$ is a set and $\card(\mathcal{T}_{\beta}/\cong)\leqq \tau(\beta)$.
\end{lemma}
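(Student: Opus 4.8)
The plan is to prove both assertions together by transfinite induction on $\beta$, the induction being driven by a structural decomposition of the members of $\mathcal{T}_\beta$; the stages below $\omega$ are supplied directly by Lemma~\ref{lemma 3.1}.

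\emph{Structural decomposition.} Let $(A,f)\in\mathcal{T}_\beta$. Since $(A,f)\in\mathcal{T}$ is connected and has $c_A$ as its only element outside $\dom f$, it contains no cycle and every element of $A$ reaches $c_A$ under iteration of $f$; hence $A=\{c_A\}\cup\bigcup\{P(y)\colon y\in f^{-1}(c_A)\}$, the sets $P(y)$ are pairwise disjoint, none of them contains $c_A$, and each $(P(y),f\!\upharpoonright\!P(y))$ is a member of $\mathcal{T}$ with $c_{P(y)}=y$. The degree of $y$ is the same computed in $(A,f)$ or in $(P(y),f\!\upharpoonright\!P(y))$, and by the definition of $\s_f$ one has $\s_f(y)<\s_f(c_A)\leqq\beta$ for every $y\in f^{-1}(c_A)$; thus $(P(y),f\!\upharpoonright\!P(y))$ is either the one-element member of $\mathcal{T}$ (when $\s_f(y)=0$) or an element of $\mathcal{T}_{\s_f(y)}$ with $0<\s_f(y)<\beta$ (the condition $(\bigstar)$ of Notation~\ref{notation 2.1} being inherited by the preimage-closed subalgebra $P(y)$). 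Moreover $f^{-1}(c_A)\neq\emptyset$ because $\s_f(c_A)>0$, and $(\bigstar)$ applied in $(A,f)$ says precisely that, for each fixed isomorphism type, at most two of the $y\in f^{-1}(c_A)$ have $(P(y),f\!\upharpoonright\!P(y))$ of that type.

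\emph{$\mathcal{T}_\beta/\!\cong$ is a set.} Let $\mathcal{S}_{<\beta}$ be the collection whose members are the isomorphism class of the one-element algebra of $\mathcal{T}$ together with all isomorphism classes in $\bigcup_{0<\alpha<\beta}\mathcal{T}_\alpha$. By the induction hypothesis each $\mathcal{T}_\alpha/\!\cong$ ($\alpha<\beta$) is a set, and there are only set-many such $\alpha$, so $\mathcal{S}_{<\beta}$ is a set. By the decomposition above, the isomorphism type of $(A,f)\in\mathcal{T}_\beta$ is completely determined by the function $m_A\colon\mathcal{S}_{<\beta}\to\{0,1,2\}$ counting, for each class, how many $y\in f^{-1}(c_A)$ yield $(P(y),f\!\upharpoonright\!P(y))$ in that class, and $m_A$ is not identically $0$; conversely each nonzero such function is realized (attach the prescribed numbers of representatives below a new non-domain point and check membership in $\mathcal{T}^{\bigstar}$). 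Hence $(A,f)\mapsto m_A$ embeds $\mathcal{T}_\beta/\!\cong$ into the set of all maps $\mathcal{S}_{<\beta}\to\{0,1,2\}$, so $\mathcal{T}_\beta/\!\cong$ is a set and $\card(\mathcal{T}_\beta/\!\cong)\leqq 3^{\,\card\mathcal{S}_{<\beta}}$.

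\emph{The cardinal bound, and the main obstacle.} If $\beta$ is finite, $\card(\mathcal{T}_\beta/\!\cong)\leqq\aleph_0=\tau(\beta)$ is exactly Lemma~\ref{lemma 3.1} and Notation~\ref{tau-beta-notation}(i). If $\beta$ is infinite, one uses the induction hypotheses $\card(\mathcal{T}_\alpha/\!\cong)\leqq\tau(\alpha)$ for $\alpha<\beta$ — together with the fact that the finite classes contribute only $\aleph_0$, again by Lemma~\ref{lemma 3.1} — to bound $\card\mathcal{S}_{<\beta}$ by a cardinal controlled by $\sup\{\tau(\alpha)\colon\alpha<\beta\}$, and then must verify that $3$ raised to that power is at most $\tau(\beta)=\sup\{2^{\tau(\alpha)}\colon\alpha<\beta\}$ of Notation~\ref{tau-beta-notation}. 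I expect this last comparison of cardinals to be the crux. For a successor $\beta=\gamma+1$ it is immediate: the supremum defining $\tau(\beta)$ is attained at $\gamma$, so $\card\mathcal{S}_{<\beta}\leqq\tau(\gamma)$ and $3^{\tau(\gamma)}=2^{\tau(\gamma)}=\tau(\beta)$ at once. For a limit $\beta$ the exponential does not in general commute with the supremum, so one has to argue more carefully — tracking the cofinality of $\beta$, the monotonicity of $\tau$, and the precise recursion of Notation~\ref{tau-beta-notation} — to keep $3^{\,\card\mathcal{S}_{<\beta}}$ from exceeding $\tau(\beta)$; this limit step is the delicate point of the whole argument.
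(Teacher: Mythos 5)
Your structural decomposition of $A\in\mathcal{T}_\beta$ into $\{c_A\}$ together with the branches $P(y)$, $y\in f^{-1}(c_A)$, and the encoding of the isomorphism type by a multiplicity function with values in $\{0,1,2\}$ (bounded by $(\bigstar)$) is exactly the mechanism of the paper's proof, which phrases the same count as ``$\prod_{\alpha<\beta}2\cdot(2^{\card(\mathcal{T}_{\alpha}/\cong)})$ possibilities'' on top of the base case from Lemma~\ref{lemma 3.1}. Your proof that $\mathcal{T}_\beta/\cong$ is a set, and your verification of the bound for finite and successor $\beta$, are complete. The genuine gap is the one you flag yourself: you never prove $3^{\card\mathcal{S}_{<\beta}}\leqq\tau(\beta)$ for limit $\beta$, so the proposal does not establish the stated inequality there. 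The paper closes that step by asserting $\prod_{\alpha<\beta}2^{\tau(\alpha)}=\tau(\beta)$, i.e.\ by identifying $2^{\sup_{\alpha<\beta}\tau(\alpha)}$ with $\sup_{\alpha<\beta}2^{\tau(\alpha)}$ --- precisely the commutation of exponential and supremum that you correctly refuse to take for granted.

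You should know that your caution is vindicated: the limit step cannot be completed with $\tau$ as in Notation~\ref{tau-beta-notation}. Take $\beta=\omega\cdot2$. Your successor argument gives (both as upper and lower bound) $\card(\mathcal{T}_{\omega+n}/\cong)=\beth_{n+1}$, the $(n{+}1)$-fold iterated exponential of $\aleph_0$, and $\tau(\omega+n)=\beth_{n+1}$ as well; hence $\card\mathcal{S}_{<\omega\cdot2}=\sup_n\beth_{n+1}=\beth_\omega$ while $\tau(\omega\cdot2)=\sup_n 2^{\beth_{n+1}}=\beth_\omega$. But every nonzero multiplicity function on $\mathcal{S}_{<\omega\cdot2}$ is realized by some $A\in\mathcal{T}_{\omega\cdot2}$ (all children have degree $<\omega\cdot2$, so $\s_f(c_A)\leqq\omega\cdot2$ automatically, and $(\bigstar)$ is preserved), and distinct functions give non-isomorphic algebras; therefore $\card(\mathcal{T}_{\omega\cdot2}/\cong)=3^{\beth_\omega}=2^{\beth_\omega}>\beth_\omega=\tau(\omega\cdot2)$, contradicting the inequality of Lemma~\ref{tau-beta-lemma} as stated. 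The defect is in the definition of $\tau$, not in your combinatorics: if one sets $\tau(\beta)=2^{\sup\{\tau(\alpha)\colon\alpha<\beta\}}$ for infinite $\beta$ (which is what the paper's product actually computes), then $3^{\card\mathcal{S}_{<\beta}}=2^{\card\mathcal{S}_{<\beta}}\leqq 2^{\sup_{\alpha<\beta}\tau(\alpha)}=\tau(\beta)$ and your argument, limit case included, goes through verbatim; the qualitative conclusions drawn from the lemma later in the paper are unaffected.
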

\begin{proof}
By induction:
\par
(I) If $n\in\mathbb{N}$, then 
$\card(\mathcal{T}_{n}/\cong)< \aleph_{0} = \tau(n)$ according to Lemma~\ref{lemma 3.1}.
\newline\par
(II) Let $\beta$ be an infinite ordinal and suppose that for each $\alpha\in\ord$, $\alpha < \beta$ the assertion is valid. Consider $(A,f)\in\mathcal{T}_{\beta}$. The definition of $\s_{f}$ implies
\begin{enumerate}
\item[(1)] if $y\in f^{-1}(c_{A})$, then $\s_{f}(y)\in \{\alpha\in\ord \colon \alpha < \beta\}$.
\end{enumerate}
Hence,
\begin{enumerate}
\item[(2)] if $y\in f^{-1}(c_{A})$, then $(P(y),f)$ belongs to $\mathcal{T}_{\alpha}$ for some $\alpha < \beta$.
\end{enumerate}
The condition ($\bigstar$) is valid in $(A,f)$, thus  there are 
$\prod\limits_{\alpha<\beta}2\cdot (2^{\card(\mathcal{T}_{\alpha}/\cong)})$ 
 possibilities how $(A,f)$ can look like.
 It yields that $\mathcal{T}_{\beta}/\cong$ is a set and
 \begin{align*}
\card(\mathcal{T}_{\beta}/\cong)=
\prod\limits_{\alpha<\beta}2\cdot (2^{\card(\mathcal{T}_{\alpha}/\cong)})\leqq \prod\limits_{\alpha<\beta}2^{\tau(\alpha)} = \tau(\beta).
\end{align*}
\end{proof}

\section{Connected algebras}

\begin{notation}
\label{S notation}
{\rm {
Consider the following sets of connected monounary algebras:
}}

$\mathcal{U}^{\bigstar}_c=\{ (A,f)\in \mathcal{U}_c \colon$ the condition ($\bigstar$) is satisfied for all $x_1,x_2,x_3\in A\}$,

\vspace{0.1cm}
$\mathcal{S}^{(0)}=[ Z,E]\cup \bigcup _{n\in \mathbb{N}} [\underline{n},\widehat{n}]$,

$\mathcal{S}^{(1)}=\{ A\in \mathcal{U}^{\bigstar}_c \colon$ there is $a\in A$
such that $A^{'}=\{a\}, A\setminus P(a)\in \mathcal{S}^{(0)}\}$,
 
$\mathcal{S}^{(2)}=\{ A\in \mathcal{U}^{\bigstar}_c\colon A^{(\infty)}=\emptyset \}$.

{\rm{Put}}
$$\mathcal{S}=\mathcal{S}^{(0)}\cup \mathcal{S}^{(1)}\cup \mathcal{S}^{(2)}.$$
\end{notation}

We remark that 
$\mathcal{S}^{(0)}\subset \mathcal{U}^{\bigstar}_c$. 
\begin{lemma}
\label{lemma 4.1}
Let $A\in \mathcal{U}_c$ possess no cycle.\\
If $A\ncong Z$ and $A^{(\infty)}=A$, then $\mathbf{V}(A)=\mathbf{V}(E)$.
\end{lemma}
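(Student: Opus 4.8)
The plan is to show mutual membership: $A\in\mathbf V(E)$ and $E\in\mathbf V(A)$, whence $\mathbf V(A)=\mathbf V(E)$ by Lemmas~\ref{pom2} and~\ref{pom7} (applied in the form: $X\in\mathbf V(Y)$ implies $\mathbf V(X)\subseteq\mathbf V(Y)$). First note what the hypotheses give us about $A$. Since $A$ is connected, has no cycle, and $A^{(\infty)}=A$, every element $x\in A$ lies on an infinite backward sequence, and $A\ncong Z$ forces the operation $f$ to fail to be injective somewhere (if $f$ were injective on all of $A$, connectedness + no cycle + $A^{(\infty)}=A$ would force $A\cong Z$, which one should note using Lemma~\ref{pom5} as a sanity check but prove directly here). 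So there is an element $c\in A$ with $\card f^{-1}(c)\ge 2$. Because $A^{(\infty)}=A$, from $c$ we can go forward along the unique forward ray $c,f(c),f^2(c),\dots$, and this ray is an isomorphic copy of $Z$ sitting inside $A$ sitting "downstream", while $c$ has at least two preimages each again lying in $A^{(\infty)}$; this is exactly the local picture of $E$ at its branch point $0$.

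For $E\in\mathbf V(A)$: I would build $E$ as a retract of a suitable power of $A$, invoking the retract criterion, Theorem~\ref{Thm}. Concretely, take $D=A^{I}$ for an index set $I$ of size $|A|$ (or just countable); inside $D$ I want to exhibit a subalgebra isomorphic to $E$ and check conditions (a) and (b) of Theorem~\ref{Thm}. Condition (b) is vacuous once the chosen copy of $E$ meets every connected component of $D$ we need — but $D=A^I$ is itself connected (a product of connected monounary algebras with a common "limit direction" need not be connected in general, so more care is needed: one should instead argue with $D$ a product of copies of $A$ together with at least one copy of $E$ already known to embed, or observe that the component of $D$ containing the diagonal image of $E$ is what matters and apply the criterion relative to that component). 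Condition (a) is the degree inequality: at the branch point of the embedded $E$ we must find, among the preimages, one of degree at least as large — this holds because in $A$ the element $c$ has two preimages both in $A^{(\infty)}$, giving degree $\infty$, so the inequality $\s_f(y)\le\s_f(z)$ is automatic. The point of $A^{(\infty)}=A$ is precisely that all degrees in sight are $\infty$, which trivializes every degree comparison in Theorem~\ref{Thm}.

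For $A\in\mathbf V(E)$: since $A$ is connected with $A^{(\infty)}=A$, no cycle, and $A\ncong Z$, I would realize $A$ as a retract of a large power $E^{J}$. Take $J=A$. Map each $a\in A$ into $E^{A}$ by a coordinatewise recipe that records, in coordinate $b\in A$, "how $a$ relates to $b$": send $a\mapsto(\varphi_b(a))_{b\in A}$ where $\varphi_b\colon A\to E$ is built so that $f$ is respected and the diagonal-type image is a subalgebra isomorphic to $A$. Then check that this subalgebra $M\cong A$ is a retract of $E^{A}$ via Theorem~\ref{Thm}: again $E^{(\infty)}=E$ (every point of $E$ is on an infinite backward ray since $\mathbb Z\subseteq E$ and the attached $\mathbb N$-tails feed into $0$), hence $(E^A)^{(\infty)}=E^A$, so (b2) never triggers, (b1) is vacuous (no cycles anywhere), and (a) reduces to: whenever $y\in f^{-1}(M)$ there is $z\in M$ with $f(y)=f(z)$ and $\s_f(y)\le\s_f(z)$ — and since all degrees are $\infty$ this is just "$f(y)\in f(M)$", which holds by the surjectivity built into the construction of $M$.

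The main obstacle is the middle construction: exhibiting a concrete embedding $A\hookrightarrow E^{J}$ whose image satisfies the retract criterion, i.e. engineering the coordinate maps $\varphi_b$ so that (i) they are homomorphisms $A\to E$, (ii) the resulting map $A\to E^J$ is injective, and (iii) for every $d=(d_b)\in E^J$ with $f(d)\in M$, $d$ itself has the same $f$-image as some point of $M$. The idea for (i) is that any connected monounary algebra with $A^{(\infty)}=A$ and no cycle admits, for each "cut point", a homomorphism onto $E$ (or onto $Z$) that unfolds $A$ along the chosen path; the delicate bookkeeping is to choose enough such homomorphisms that collectively they separate points of $A$ and close up under $f$ into a subalgebra, and then to verify (iii) by a back-and-forth argument on the coordinates. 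Once these homomorphisms are in hand, both directions of the retract criterion are routine precisely because the hypothesis $A^{(\infty)}=A$ kills every $\s_f$-comparison.
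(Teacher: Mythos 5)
Your overall skeleton is right --- prove $E\in\mathbf{V}(A)$ and $A\in\mathbf{V}(E)$, and observe that the hypothesis $A^{(\infty)}=A$ makes every degree comparison in Theorem~\ref{Thm} trivial --- but the proof is not actually carried out where it matters. For the easy direction you work much harder than necessary and in doing so get tangled up: there is no need for a power of $A$ at all. Since $f$ is not injective (else $A\cong Z$), pick $c$ with two distinct preimages; the forward orbit of $c$, one backward ray through each preimage (these are pairwise disjoint because $A$ has no cycle) form a subalgebra isomorphic to $E$, and it is a retract of $A$ itself by Theorem~\ref{Thm}: condition (b) is vacuous because $A$ is connected, and condition (a) is automatic because all degrees are $\infty$. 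This is exactly the paper's one-line first step, and it avoids the connectivity-of-products worries you raise and do not resolve.

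The genuine gap is in the direction $A\in\mathbf{V}(E)$. You reduce everything to ``exhibiting a concrete embedding $A\hookrightarrow E^J$'' via a separating family of coordinate homomorphisms $\varphi_b\colon A\to E$, and then explicitly declare this the ``main obstacle,'' to be handled by ``delicate bookkeeping'' and ``a back-and-forth argument on the coordinates.'' That construction is the entire content of the lemma, and it is left undone; note also that a single homomorphism $A\to E$ can separate at most two preimages of any given point (since fibres in $E$ have size at most $2$), so when $\card f^{-1}(c)$ is large the separating family genuinely has to be engineered, not just asserted. The paper sidesteps this entirely: with $\alpha=\card A$ it forms $B=\prod_{i\in I}E$ over $I=\bigcup_{j\in\mathbb{Z}}I_j$ with $\card I_j=\alpha$, and exhibits one connected component $K$ of $B$ (the one containing the element constant equal to $j$ on $I_j$) in which \emph{every} element has degree $\infty$ and at least $\alpha$ preimages. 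Any connected cycle-free algebra of cardinality $\le\alpha$ with all degrees $\infty$ then embeds into $K$ by a straightforward greedy recursion, and the image is a retract of $B$ by Theorem~\ref{Thm} since $B$ has no cycles and all degrees are $\infty$. So the fix is not to separate points of $A$ by maps into $E$, but to make one component of a large power of $E$ fat enough to swallow $A$ as a subalgebra.
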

\begin{proof}
By Theorem~\ref{Thm}, $E\in \mathbf{R}(A)$. 
Thus $\mathbf{V}(E)\subseteq \mathbf{V}(A)$ according to Lemma~\ref{pom7}.

To see the opposite inclusion we need to prove $A\in \mathbf{V}(E)$.
It holds  trivially in the case $A\cong E$. 
Suppose that $A\ncong E$. 
Put $\alpha=\card A$. 
Let $I_{j}$ for $j\in \mathbb{Z}$ be disjoint sets of indices such that 
$\card I_{j} = \alpha$ for each $j\in Z$ and $I=\bigcup\limits_{j\in \mathbb{Z}} I_{j}$. Put $B_{i}=E$ for each $i\in I$ and $B=\prod\limits_{i\in I}B_{i}$. 
Denote by $K$ the connected component of $B$ such that $K$ contains the element $b\in B$ with $b(i)=j$ for each $i\in I_{j}$, $j\in \mathbb{Z}$.

Let $x\in K$.
Then there are $k,l\in \mathbb{N}$ such that $f^k(x)=f^l(b)$ since $K$ is connected.
Denote $d=f^l(b)$.
We have $d(i)=j$ for each $j\in \mathbb{N}$ and $i\in I_{-l+j}$.
Therefore $d(i)=k$ for every $i\in I_{-l+k}$.
Consequently $(f(x))(i)=1$ and $x(i)=0$ since $f^{-1}(1)=\{0\}$ in the algebra $E$.
We get  $\card f^{-1}(x)\geq\alpha$. 

Thus $K$ is a connected algebra such that $\s_{f}(x)=\infty$ and $\card f^{-1}(x)\geq\alpha$ for each $x\in K$. 
Such an algebra contains obviously a subalgebra $T$ such that $T\cong A$. 
Further, $T$ is a retract of $B$ according to Theorem~\ref{Thm} and the fact that no connected component of $B$ contains a cycle.
Hence $A\in \mathbf{RP}(E)=\mathbf{V}(E)$. 
\end{proof}

\begin{lemma}
\label{lemma 4.2}
Let $n\in \mathbb{N}$ and $A\in \mathcal{U}_c$ possess $n$-element cycle.\\ 
If $A\ncong \underline{n}$ and $A^{(\infty)}=A$, then
$\mathbf{V}(A)=\mathbf{V}(\widehat{n})$.
\end{lemma}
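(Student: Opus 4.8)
The plan is to prove the two inclusions $\mathbf{V}(\widehat{n})\subseteq\mathbf{V}(A)$ and $\mathbf{V}(A)\subseteq\mathbf{V}(\widehat{n})$ separately, in the spirit of Lemma~\ref{lemma 4.1}. Write $C=\{c_{0},\dots,c_{n-1}\}$ for the unique cycle of the connected algebra $A$, with $f(c_{r})=c_{r+1}$ (indices mod $n$); the hypothesis $A^{(\infty)}=A$ says that $\s_{f}(x)=\infty$ for every $x\in A$, so in particular $f$ is onto $A$.

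For $\mathbf{V}(\widehat{n})\subseteq\mathbf{V}(A)$ I would exhibit a retract of $A$ isomorphic to $\widehat{n}$. Since $A$ is connected and $A\ncong\underline{n}$ there is an element outside $C$; pushing it forward along $f$ until it first meets $C$ yields a non-cyclic $a\in A$ with $f(a)\in C$, say $f(a)=c_{0}$. Using $A^{(\infty)}=A$, choose $a_{1}=a,a_{2},a_{3},\dots$ with $f(a_{k+1})=a_{k}$; all $a_{k}$ are non-cyclic and pairwise distinct, and $M:=C\cup\{a_{k}:k\in\mathbb{N}\}$ is a subalgebra of $A$ isomorphic to $\widehat{n}$ (send $c_{r}\mapsto r$ and $a_{k}\mapsto(k,1)$). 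By Theorem~\ref{Thm}, $M$ is a retract of $A$: condition (b) is vacuous because $A$ is connected and $M\neq\emptyset$, and condition (a) holds since $f$ maps $M$ onto $M$ while every degree in $A$ is $\infty$. Hence $\widehat{n}\cong M\in\mathbf{R}(A)\subseteq\mathbf{V}(A)$, and Lemma~\ref{pom7} gives $\mathbf{V}(\widehat{n})\subseteq\mathbf{V}(A)$.

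For $\mathbf{V}(A)\subseteq\mathbf{V}(\widehat{n})$ it is enough to show $A\in\mathbf{V}(\widehat{n})=\mathbf{RP}(\widehat{n})$, and for this I would first prove the separation statement: \emph{for any two distinct elements $a,a'$ of $A$ there is a homomorphism $h\colon A\to\widehat{n}$ with $h(a)\neq h(a')$}. Granting this, $A$ embeds into the power $B=\widehat{n}^{\,I}$ where $I$ is the (nonempty) set of all homomorphisms $A\to\widehat{n}$; let $T\cong A$ be the image. Since $A$ is connected, $T$ lies inside a single connected component $K$ of $B$, and $K$ contains an $n$-element cycle, namely the image of $C$. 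Then $T$ is a retract of $B$ by Theorem~\ref{Thm}: all degrees in $B$ equal $\infty$ (as $\widehat{n}^{(\infty)}=\widehat{n}$) and $f$ is onto $T\cong A$, which settles (a); every cycle of $B$ has exactly $n$ elements (a tuple can be cyclic only if each coordinate lies on the $n$-cycle of $\widehat{n}$, whose elements have $f$-period $n$), so (b1) holds with $K^{*}=K$; and (b2) is empty since $B$ has no element of degree smaller than $\infty$. Therefore $A\cong T\in\mathbf{R}(B)\subseteq\mathbf{RP}(\widehat{n})=\mathbf{V}(\widehat{n})$, and Lemma~\ref{pom2} yields $\mathbf{V}(A)\subseteq\mathbf{V}(\widehat{n})$.

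The main work, and the step I expect to be most delicate, is the separation statement. I would prove it by induction on the least $k\geq 0$ with $f^{k}(a)=f^{k}(a')$ or $f^{k}(a),f^{k}(a')\in C$; this parameter is finite because the forward orbit of every element of $A$ reaches $C$. If $f(a)=f(a')=:q$, rotate $C$ so that the image of $q$ is forced to be the element $0$ of $\widehat{n}$ (take $h$ on $C$ to be the isomorphism sending $f^{d}(q)$ to $d\bmod n$, where $d$ is the distance from $q$ to $C$, and map $q$'s forward path accordingly), send the distinct elements $a,a'$ of $f^{-1}(q)$ to the two preimages $n-1$ and $(1,1)$ of $0$ in $\widehat{n}$, and extend the resulting partial map to all of $A$ using that $\widehat{n}$ has no sources. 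If $f(a)\neq f(a')$, the induction hypothesis produces $h$ with $h(f(a))\neq h(f(a'))$, and then $f(h(a))=h(f(a))\neq h(f(a'))=f(h(a'))$ forces $h(a)\neq h(a')$; the base case $a,a'\in C$ is handled by any $h$ restricting to an isomorphism of $C$ onto the cycle of $\widehat{n}$. The two points needing care are that the partial homomorphism always extends (immediate from surjectivity of $f$ on $\widehat{n}$, processing elements by their distance to the part already defined) and that the induction is well founded (it is, since the parameter drops by one when one passes from $(a,a')$ to $(f(a),f(a'))$).
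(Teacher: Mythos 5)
Your proof is correct, and for the substantive inclusion $\mathbf{V}(A)\subseteq\mathbf{V}(\widehat{n})$ it takes a genuinely different route from the paper. The paper forms the product $B=\prod_{a\in A}\widehat{n}$, picks a connected component $K$ containing a cyclic tuple in which every element of $\mathbb{Z}_{n}$ occurs $\card A$ times, observes (as in Lemma~\ref{lemma 4.1}) that every $x\in K$ then satisfies $\s_f(x)=\infty$ and $\card f^{-1}(x)\geq\card A$, and concludes that such a ``fat'' component contains a copy of $A$, which Theorem~\ref{Thm} upgrades to a retract. You instead prove that homomorphisms $A\to\widehat{n}$ separate points and embed $A$ subdirectly into $\widehat{n}^{\,I}$; the separation lemma, with its rotation of the cycle so that the branch point lands on the unique element $0$ of $\widehat{n}$ having two preimages, is the delicate step and is exactly where the hypotheses ($n$-cycle, $A^{(\infty)}=A$, surjectivity of $f$ on $\widehat{n}$) get used. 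The paper's argument is shorter and runs in strict parallel with Lemma~\ref{lemma 4.1}, at the cost of leaving the existence of the embedding into the fat component to the reader; yours is longer but more explicit about which homomorphisms exist, and the same separation lemma would let you pin down $\mathbf{V}(\widehat{n})$ more precisely if needed. One small correction: when $I$ is infinite, $\widehat{n}^{\,I}$ does have connected components without a cycle (take a tuple whose coordinates sit arbitrarily high on the tail), so condition (b2) of Theorem~\ref{Thm} is not vacuous as you claim; it is nonetheless satisfied, because every element of the power has degree $\infty$, so any $y_{0}\in T$ works. This does not affect the validity of the argument.
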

\begin{proof}
Obviously $\widehat{n}\in \mathbf{R}(A)$ and therefore
$ \mathbf{V}(\widehat{n})\subseteq \mathbf{V}(A)$. 
To see the opposite inclusion, we need to prove that
$A\in \mathbf{V}(\widehat{n})$.

Suppose that $A$ is not isomorphic to $\widehat{n}$.
Denote $\alpha=\card A$.
Put $B_{a}=\widehat{n}$ for each $a\in A$ and $B=\prod\limits_{a\in A}B_{a}$. 
We have $B^{(\infty)}= B$ according to Lemma~\ref{pom3}.
Next, each connected component of $B$ contains a cycle with cardinality $n$ and coordinates of cyclic elements of $B$ create subsets of $\mathbb{Z}_n$. 

Consider a component $K$ of $B$ such that there is $b\in K, b$ cyclic and every element of $\mathbb{Z}_n$ occurs $\alpha $ times in $b$.
Analogously as in the proof of the previous lemma we can see that
$\card f^{-1}(x)\geq\alpha$ for each $x\in K$.
Therefore $B$ contains a subalgebra isomorphic to $A$. 
Hence, according to Theorem \ref{Thm}, $A\in \mathbf{R}(B)$. 
This implies that $A\in \mathbf{V}(\widehat{n})$.
\end{proof}

\begin{lemma}
\label{lemma 4.3}
Let $A\in \mathcal{U}_c$. 
Then there are a set $I$ and algebras $B_{i}\in \mathbf{R}(A)$ with 
$\card B_{i}^{'}=1$ for each $i\in I$ such that
$$\mathbf{V}(A)= \mathbf{V}(\{B_{i}\colon i\in I \}).$$
\end{lemma}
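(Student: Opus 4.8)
The plan is to analyze a connected monounary algebra $A$ according to the structure of $A^{(\infty)}$ and then to exhibit, for each element $a\in A'$, a retract $B_a$ of $A$ whose own $B_a'$ is a singleton; the product of the $\mathbf{V}(B_a)$ should recapture $\mathbf V(A)$. First I would observe that if $A^{(\infty)}=A$ then $A'=\emptyset$, and then $\mathbf V(A)$ is already handled by Lemmas~\ref{lemma 4.1} and \ref{lemma 4.2} (or by Lemma~\ref{pom1}), so one may pick $I$ a singleton and $B_i=A$ if in addition $\card A'=1$ vacuously/by convention, or reduce to one of $Z,E,\underline n,\widehat n$, each of which has $\card B' \le 1$; the genuinely interesting case is $A^{(\infty)}\subsetneq A$ together with $A'\neq\emptyset$. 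In that case $A^{(\infty)}$ is itself a connected subalgebra of $A$ and each $a\in A'$ satisfies $f(a)\in A^{(\infty)}$ while $P(a)\cap A^{(\infty)}=\emptyset$.

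Next, for each $a\in A'$ I would define $B_a = A^{(\infty)}\cup P(a)$ with the restricted operation. This is a subalgebra of $A$ (it is closed under $f$ since $f(a)\in A^{(\infty)}$ and $A^{(\infty)}$, $P(a)$ are $f$-closed), it is connected, and $B_a' = \{a\}$: indeed $B_a^{(\infty)} = A^{(\infty)}$ because degrees computed inside $B_a$ agree with those in $A$ on $P(a)$, so $a$ is the unique element of $B_a\setminus B_a^{(\infty)}$ whose image lies in $B_a^{(\infty)}$. Then I would check $B_a\in\mathbf R(A)$ via Theorem~\ref{Thm}: $A$ is connected so there are no components disjoint from $B_a$ (condition (b) is vacuous), and for (a), if $y\in f^{-1}(B_a)\setminus B_a$ then $f(y)\in A^{(\infty)}$ (since every element of $P(a')$ for $a'\in A'$ different from $a$ has image back in $A^{(\infty)}$, and likewise for elements of $A\setminus(A^{(\infty)}\cup\bigcup_{a'\in A'}P(a'))$ the relevant $y$ maps into $A^{(\infty)}$); one then needs a $z\in B_a$ with $f(z)=f(y)$ and $\s_f(y)\le\s_f(z)$. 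For $f(y)=f(a)$ one can take $z=a$ after arranging, by choosing the component structure, that $a$ has maximal degree among the preimages of $f(a)$ in $A$ — or more carefully, one enlarges the index set $I$ to a multiset of pairs so that for each $b\in A^{(\infty)}$ and each relevant degree value there is at least one chosen $a$ realizing it; this is the standard bookkeeping that makes (a) hold.

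Finally, with $I = A'$ (or the refined index set just described) and $B_i = B_{a_i}$, I would prove the two inclusions. For $\mathbf V(\{B_i\})\subseteq \mathbf V(A)$: each $B_i\in\mathbf R(A)\subseteq\mathbf V(A)$, so Lemma~\ref{pom7} applies. For $\mathbf V(A)\subseteq\mathbf V(\{B_i\})$, by Lemma~\ref{pom7} it suffices to show $A\in\mathbf V(\{B_i\})=\mathbf{RP}(\{B_i\})$; I would take the product $D = \prod_{i\in I}B_i$ and verify, using Theorem~\ref{Thm}, that $A$ embeds as a retract of $D$ (or of a suitable power of $D$): the component of $D$ built from a point whose $i$-th coordinate is $a_i\in B_i$ has $A^{(\infty)}$ sitting diagonally with every fibre over it infinitely "fat", and attached to it copies of every $P(a)$, $a\in A'$, so it contains a subalgebra isomorphic to $A$, which is a retract by condition (a) of Theorem~\ref{Thm} (condition (b) being vacuous by connectedness). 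The main obstacle is the verification of condition (a) of Theorem~\ref{Thm} in the retraction $D\to A$: one must ensure that every preimage $y$ in $D$ of an element of the copy of $A$ has a companion $z$ in that copy with $f(z)=f(y)$ and no smaller degree, which forces the careful choice of the index set $I$ so that the degrees $\s_f$ realized over each point of $A^{(\infty)}$ by the various $P(a_i)$ are cofinal in (indeed equal to) those realized in $A$; handling the element $a$ of possibly largest degree in each fibre, and the case where $A$ has an $n$-element cycle versus the no-cycle case ($Z$- vs. $E$-type behaviour inside $A^{(\infty)}$), is where the real work lies.
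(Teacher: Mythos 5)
Your construction coincides with the paper's: for each $a\in A'$ you form $B_a=A^{(\infty)}\cup P(a)$ (the paper's $B_i=A_0\cup P(b_i)$ with $A_0=A\setminus\bigcup_{i}P(b_i)=A^{(\infty)}$), show each is a retract of $A$, and recover $A$ as a retract of $\prod_{a\in A'}B_a$, so the approach is essentially identical and correct. The one point where you overcomplicate: the degree bookkeeping you propose (choosing $a$ of maximal degree among preimages of $f(a)$, enlarging $I$ to a multiset) is unnecessary, because every $y$ requiring a companion in condition (a) of Theorem~\ref{Thm} satisfies $f(y)\in A^{(\infty)}$ and hence has a companion $z\in A^{(\infty)}$ with $\s_{f}(z)=\infty$; similarly, the copy of $A$ inside the product is obtained concretely by fixing a retraction $\varphi$ of $A$ onto $A^{(\infty)}$ and sending $x\in P(a)$ to the tuple equal to $x$ in coordinate $a$ and to $\varphi(x)$ elsewhere, rather than by a fat-fibre argument.
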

\begin{proof}
Let $A^{'}= \{b_{i}\colon i\in I \}$
and $0\notin I$.
We have $\card I>1$. 

Put
$$
A_{0}=A-\bigcup_{i\in I}P(b_{i})
$$
and for each  $i\in I$ put 
$$B_{i}=A_{0}\cup P(b_{i}).$$ 
We obtain $B_{i}\in \mathbf{R}(A)$ and $\card B_{i}^{'}=1$, since  
$f(b_{i})\in A^{(\infty)}$ for each  $i\in I$.
Further, $\mathbf{V}(\{B_{i}\colon i\in I \})\subseteq \mathbf{V}(A)$ according to Lemma~\ref{pom7}.

Obviously $A_{0}$ is a retract of $A$, thus there is a retraction endomorphism $\varphi$ of $A$ onto $A_{0}$. Denote $B=\prod\limits_{i\in I}B_{i}$. Put
$$
T_{0}=\{t\in B\colon (\exists a\in A_{0})(t(i)=a \ \ \text{for each}\ \ i\in I)\}
$$
\begin{center} 
$
T_{i}=\{t\in B\colon t(i)\in P(b_{i}), t(j)=\varphi(t(i)) \ \ \text{for}\ \ j\in I-\{i\}\}
$
for $i\in I$,
\end{center} 
$$T=\bigcup_{i\in I\cup\{0\}}T_{i}.$$

In view of Theorem \ref{Thm}, $T$ is a retract of $B$.

Let us define a mapping $\Phi: A\to T$ as follows.
If $a\in A^{(\infty)}$, then $\Phi (a)= t$ where $t(j)=a$ for all $j\in I$.
If $i\in I$ and $a\in P(b_i)$, then $\Phi (a)= t$ where $t(i)=a$ and $t(j)=\varphi(a)$
for each $j\in I\setminus \{ i\}$.
It can be verified that $\Phi$ is an isomorphism.
 
Therefore we get 
$
A\in \mathbf{RP}(\{B_{i}\colon i\in I\})=\mathbf{V}(\{B_{i}\colon i\in I\}).
$
\end{proof}

\begin{lemma}
\label{lemma 4.4}
Let $A\in \mathcal{U}_c$.\\
Then there exists  $B\in \mathbf{R}(A)\cap \mathcal{U}^{\bigstar}_c$ such that $\mathbf{V}(A)=\mathbf{V}(B)$.
\end{lemma}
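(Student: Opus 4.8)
The plan is to produce $B$ by pruning from $A$ the ``superfluous'' isomorphic copies of subalgebras of the form $(P(x),f)$, keeping exactly two copies of each type below each vertex. Throughout, the inclusion $\mathbf V(B)\subseteq\mathbf V(A)$ is automatic from $B\in\mathbf R(A)$ together with Lemma~\ref{pom2}, so the work lies in arranging the pruning so that (i) $B$ is connected and lies in $\mathcal U^{\bigstar}_c$, (ii) $B$ is a retract of $A$, and (iii) $A\in\mathbf R\mathbf P(B)$.

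\emph{Construction of $B$.} For every $y\in A$ and every isomorphism type $C$ of a partial monounary algebra such that $\{x\in f^{-1}(y)\colon (P(x),f)\cong C\}$ has at least three elements, I would well-order this set and mark for deletion the whole subalgebra $(P(x),f)$ for every $x$ beyond the first two; let $R$ be the union of all marked subalgebras and put $B=A\setminus R$. First I would record the structural facts that make this well defined: if $f(x_1)=f(x_2)=f(x_3)$ and $(P(x_1),f),(P(x_2),f),(P(x_3),f)$ are pairwise isomorphic, then (using Lemma~\ref{lem 2.1} and the remark after Notation~\ref{notation 2.1}, plus a short argument about cycles through $y$) none of the $P(x_i)$ contains a cyclic element, all three roots have equal degree, and the three subalgebras are pairwise disjoint; hence the marked subalgebras are pairwise disjoint or nested, and removing $R$ leaves a subalgebra. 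Connectedness of $B$ follows because for $u\in B$ every forward iterate $f^k(u)$ again lies in $B$ (otherwise $u$ would lie in a marked $P(x)$), so the usual merging of forward orbits happens inside $B$; and $B\neq\emptyset$ since cyclic elements are never marked when $A$ has a cycle, while in the cycle-free case a kept copy at each pruned vertex keeps the vertex itself alive. By construction $B$ satisfies $(\bigstar)$ at every vertex, so $B\in\mathcal U^{\bigstar}_c$.

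\emph{$B\in\mathbf R(A)$.} I would apply Theorem~\ref{Thm} with $M=B$. Condition (b) is vacuous because $A$ is connected and $B\neq\emptyset$, so there is no component of $A$ missing $B$. For condition (a): the only $y\notin B$ with $f(y)\in B$ are the marked roots $x$, and then the surviving twin $w$ (one of the two kept copies with $(P(w),f)\cong (P(x),f)$) lies in $B$, has $f(w)=f(x)$ and $\s_f(w)=\s_f(x)$, which is exactly what (a) demands.

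\emph{$A\in\mathbf R\mathbf P(B)$ — the main obstacle.} Here I would fix a set $I$ of large cardinality, form $B^{I}$, and locate a connected component in which every pruned vertex has regained at least as many predecessors of the relevant type as it had in $A$. The difficulty is that the subalgebra hanging below such a vertex in $B^{I}$ is a \emph{power} $\widetilde C^{\,I}$ of the pruned type $\widetilde C=(P(w),f)$, not the original $C=(P(x),f)$. I would resolve this with two devices: first, the diagonal copy of any monounary algebra is a retract of each of its powers (via the ``copy a fixed coordinate'' retraction), so $\widetilde C^{\,I}$ retracts onto $\widetilde C$; second, recursively on the degree of the root of $C$ — with the $A^{(\infty)}$-part of a pruned subalgebra treated separately through Lemmas~\ref{lemma 4.1} and~\ref{lemma 4.2}, since that part is cycle-free — one has $C\in\mathbf R\mathbf P(\widetilde C)$, because $\widetilde C$ is precisely the pruned form of $C$. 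Assembling these retractions coherently across all levels, and verifying via Theorem~\ref{Thm} that the resulting subalgebra of $B^{I}$ isomorphic to $A$ is genuinely a retract, is the technical heart of the proof; once done it gives $A\in\mathbf R\mathbf P(B)=\mathbf V(B)$, hence $\mathbf V(A)=\mathbf V(B)$.
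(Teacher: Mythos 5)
Your construction of $B$ (keep two copies of each isomorphism type of $(P(x),f)$ among the $f$-siblings at every vertex, delete the rest) is exactly the paper's, and your verifications that $B$ is a connected subalgebra, that $B\in\mathcal U^{\bigstar}_c$, and that $B\in\mathbf R(A)$ via Theorem~\ref{Thm}(a) (a surviving twin $w$ has $f(w)=f(x)$ and $\s_f(w)=\s_f(x)$) are all sound. The problem is the direction $A\in\mathbf{RP}(B)$, where you correctly identify the obstacle but then propose a resolution that is not carried out and, as stated, does not work. Your plan rests on a claim ``$C\in\mathbf{RP}(\widetilde C)$, proved recursively on the degree of the root'' for the rooted partial algebras $C=(P(x),f)$ and their pruned forms $\widetilde C$. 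But $\mathbf R$ and $\mathbf P$ are only defined here for total algebras, the recursion has no treatment of limit ordinals or of roots of degree $\infty$ (and $P(x)$ can certainly contain elements of infinite degree), Lemmas~\ref{lemma 4.1} and~\ref{lemma 4.2} concern total connected algebras with $A^{(\infty)}=A$ and say nothing about re-attaching such a part below a prescribed root inside a product, and the step you call ``assembling these retractions coherently across all levels'' is precisely the entire content of the missing argument. So the technical heart is acknowledged but absent.

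The device the paper uses dissolves your ``main obstacle'' without any recursion: one never tries to recover $P(x)$ as the full subalgebra hanging below a vertex of $B^J$ (which is indeed power-like); instead one embeds $A$ into $C=\prod_{j\in J}B$ by a \emph{twisted diagonal}. Fix isomorphisms $\psi^x_1\colon P(x)\to P(u^i_1)$ and $\psi^x_2\colon P(x)\to P(u^i_2)$ onto the two retained siblings, choose an injection $\xi_i$ from $U_i$ into the $2^{\kappa}$ sign-patterns in $\{u^i_1,u^i_2\}^J$, and send $a\in P(x)$ to the tuple whose $j$-th coordinate is $\psi^x_1(a)$ or $\psi^x_2(a)$ according to the $j$-th entry of $\xi_i(x)$ (elements of $B$ go to the plain diagonal). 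Each coordinate map is an isomorphism, so the image of $P(x)$ is a copy of $P(x)$ itself, not a power of its pruned form; injectivity of $\xi_i$ separates the $\card U_i>2$ siblings; and the retractness of the image $T=\nu(A)$ in $C$ is then a direct (if tedious) check of Theorem~\ref{Thm}(a) using the degree inequalities already guaranteed by $B\in\mathbf R(A)$. Without this embedding, or a completed substitute for your recursive claim, the proof of $\mathbf V(A)\subseteq\mathbf V(B)$ is not established.
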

\begin{proof}
If $A\in \mathcal{U}^{\bigstar}_c$ then  the assertion holds trivially.
Now assume that $x_1,x_2,x_3\in A$ are such that the condition ($\bigstar$) is not valid. 
For each $x\in A$ let
$$
U(x)=\{y\in A\colon f(x)=f(y), P(x)\cong P(y)\}.
$$
We have $U(x_1)=U(x_2)=U(x_3)$ and card $U(x_1)>2$.
Denote
$$
\{U_{i}\colon i\in I\}= \{U(x)\colon x\in A, \card U(x)>2\}.
$$
For each $i\in I$ take two distinct fixed elements of $U_{i}$ and denote them by $u_{1}^{i}$, $u_{2}^{i}$. 
Now put
$$
B=(A\setminus 
(\bigcup_{i\in I} \bigcup _{u\in U_{i}} P(u)))\cup 
\bigcup _{i\in I}(P(u^i _1)\cup P(u^i _2)).
$$
Then $B\in \mathbf{R}(A)\cap \mathcal{U}^{\bigstar}_c$ and 
$\mathbf{V}(B)\subseteq \mathbf{V}(A)$ according to Lemma~\ref{pom7}.

Let $\kappa=\sup\{\card U_{i} \colon i\in I\}$ and let $J$ be a set with $\card J=\kappa$. Put $B_{j}=B$ for each $j\in J$ and let $C=\prod\limits_{j\in J}B_{j}$.

We have that
for $i\in I$ the set 
$$
D_{i}=\{c\in C\colon c(j)\in \{u_{1}^{i},u_{2}^{i}\}\ \ \text{for each}\ \ j\in J\}
$$
has 
$2^{\kappa}$ elements, thus there is an injection $\xi_{i}\colon U_{i}\to D_{i}$. Further, if $i\in I $ and $x\in U_{i}$, then there exists isomorphisms
$$
\psi_{1}^{x} \colon P(x)\to P(u_{1}^{i}),
$$
$$
\psi_{2}^{x} \colon P(x)\to P(u_{2}^{i}).
$$
Now let us define an injective homomorphism $\nu \colon A\to C$.
If $a\in B$, then denote 
\begin{enumerate}
\item[(1)] $\nu(a)=\overline{a}\in C$, where $\overline{a}(j)=a$ for each $j\in J$.
\end{enumerate}
If $i\in I$, $a\in U_{i}\setminus \{u_{1}^{i},u_{2}^{i} \}$, then we define
\begin{enumerate}
\item[(2)] $\nu(a)=\xi_{i}(a)$.
\end{enumerate}
If $i\in I$, $x\in U_{i}\setminus \{u_{1}^{i},u_{2}^{i} \}$ and $a\in P(x)\setminus\{x\}$ then we define
\begin{enumerate}
\item[(3)] $\nu(a)=z$, where
\end{enumerate}
$$
z(j)=
\begin{cases}
\psi_{1}^{x}(a) &  \text{if}\ \, (\xi_{i}(x))(j)=u_1^{i},\\
\psi_{2}^{x}(a) & \text{if} \ \, (\xi_{i}(x))(j)=u_2^{i},
\end{cases}
$$ for each $j\in J$.
Denote 
$T=\nu(A).$
It is a technical matter to verify
that $T$ is a retract of $C$; hence
$$
A\in \mathbf{R}(C)\subseteq \mathbf{RP}(B)= \mathbf{V}(B).
$$
We get $\mathbf{V}(A)\subseteq \mathbf{V}(B)$.
\end{proof}

\begin{lemma}
\label{lemma 4.5}
Let $A\in \mathcal{U}_c$ and
$a\in A$ be such that 
$A^{'}=\{a\}$.  
If  $A$ possesses no cycle and and $A\setminus P(a)\ncong Z$, 
then there is $B\in \mathbf{R}(A)\cap \mathcal{S}^{(1)}$ such that  
$\mathbf{V}(A)=\mathbf{V}(B,E).$
\end{lemma}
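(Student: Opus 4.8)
The plan is to reduce $A$ to a member of $\mathcal{S}^{(1)}$ lying in $\mathbf{R}(A)$ while keeping the retract variety under control, using the same machinery that produced Lemmas 4.1 and 4.4. First I would split $A$ along the unique element $a$ of $A'$: write $A_\infty = A^{(\infty)}$, which is a connected algebra with no cycle and with $A_\infty = A_\infty^{(\infty)}$, and note $A = A_\infty \cup P(a)$ with $f(a)\in A_\infty$. The ``infinite part'' $A_\infty$ is either isomorphic to $Z$ or not; by hypothesis $A\setminus P(a) = A_\infty \ncong Z$, so Lemma~\ref{lemma 4.1} gives $\mathbf{V}(A_\infty) = \mathbf{V}(E)$. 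In particular $E \in \mathbf{R}(A_\infty) \subseteq \mathbf{R}(A)$ (a retraction of $A_\infty$ onto a copy of $E$ extends to a retraction of $A$ by collapsing $P(a)$ appropriately onto $A_\infty$, using Theorem~\ref{Thm}(b2)), so $\mathbf{V}(E)\subseteq \mathbf{V}(A)$.

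Next I would construct $B$. First apply Lemma~\ref{lemma 4.4} to replace $A$ by an algebra in $\mathbf{R}(A)\cap \mathcal{U}_c^{\bigstar}$ with the same retract variety; since retracts preserve the property that $A'$ is a singleton and that there is no cycle, and since $A\setminus P(a)\ncong Z$ is preserved along the relevant retraction, I may assume from the start $A\in \mathcal{U}_c^{\bigstar}$. Now set
$$
B = P(a) \cup E_0,
$$
where $E_0$ is a fixed copy of $E$ glued to $A$ by identifying the zero element of $E$ (the unique element with $\s_f = \infty$ reachable from $f(a)$) with $f(a)\in A$; more precisely, $B$ is obtained from $A$ by replacing the retract $A_\infty$ of $A$ by its retract $E_0\cong E$ and keeping $P(a)$ intact. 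Then $B \in \mathbf{R}(A)$ by Theorem~\ref{Thm}, $B' = \{a\}$, $B\setminus P(a) = E_0 \in \mathcal{S}^{(0)}$, and $B\in \mathcal{U}_c^{\bigstar}$ (condition ($\bigstar$) survives passing to the retract since we only shrank $A_\infty$, which is disjoint from the $P(x)$'s counted by ($\bigstar$) — those live inside $P(a)$ or are shared endpoints). Hence $B \in \mathcal{S}^{(1)}$, and $\mathbf{V}(B)\subseteq \mathbf{V}(A)$ by Lemma~\ref{pom7}, while also $\mathbf{V}(E)\subseteq \mathbf{V}(B)$ since $E\in \mathbf{R}(B)$; combined with the previous paragraph this gives $\mathbf{V}(B,E) = \mathbf{V}(B)\subseteq \mathbf{V}(A)$.

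It remains to show $A \in \mathbf{V}(B,E)$, which is the crux. I would take a large index set: let $\alpha = \card A$, let $\{I_j : j\in\mathbb{Z}\}$ be pairwise disjoint sets of indices of size $\alpha$ together with one extra copy of $B$, and form $D = B \times \prod_{i\in I} E_i$ with $I = \bigcup_{j\in\mathbb{Z}} I_j$ and $E_i = E$. As in the proof of Lemma~\ref{lemma 4.1}, the $E$-factors (with the diagonal-of-level-$j$ choice of base point) produce a connected component in which every element has $\s_f = \infty$ and at least $\alpha$ preimages; gluing this to the $B$-factor along $f(a)$ yields a component $K$ of $D$ that contains a subalgebra isomorphic to $A$: the $P(a)$-part comes from the $B$-factor (which carries $P(a)$ exactly) and the $A^{(\infty)}$-part embeds into the ``fat'' $E$-component because $A^{(\infty)}$ is a connected cycle-free algebra of size $\le\alpha$ with all degrees $\infty$. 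Finally Theorem~\ref{Thm} (no component of $D$ has a cycle, and the degree/fixed-element conditions hold because of the $\alpha$-fold branching) shows this copy of $A$ is a retract of $D$, so $A \in \mathbf{RP}(\{B,E\}) = \mathbf{V}(B,E)$, giving the reverse inclusion $\mathbf{V}(A)\subseteq \mathbf{V}(B,E)$.

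The main obstacle is the last step: one must embed $A$ as a \emph{retract} (not merely a subalgebra) of the product $D$, which requires arranging the branching multiplicities so that every element outside the image has a same-$f$-value partner of at least its degree (Theorem~\ref{Thm}(a)) — this is exactly why the index set is taken of size $\card A$ and stratified over $\mathbb{Z}$, and the bookkeeping of how $P(a)$ sits over the boundary element $f(a)$, where the $B$-factor and the $E$-component meet, is the delicate part.
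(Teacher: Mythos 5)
Your plan is correct and follows the same skeleton as the paper's proof: reduce to $A\in\mathcal{U}^{\bigstar}_c$ via Lemma~\ref{lemma 4.4}, attach $P(a)$ to a canonical core from $\mathcal{S}^{(0)}$ to get $B$, obtain $\mathbf{V}(B,E)\subseteq\mathbf{V}(A)$ from Lemma~\ref{pom7}, and prove the reverse inclusion by exhibiting $A$ as a retract of a product via Theorem~\ref{Thm}. You differ from the paper in two places. First, the paper takes $B=A_0\cup P(a)$ with $A_0\cong Z$ a subalgebra through $f(a)$, so $B\setminus P(a)\cong Z$; you take the core to be a copy of $E$ instead. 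Both land in $\mathcal{S}^{(1)}$ (since $Z,E\in\mathcal{S}^{(0)}$, and in either case $B^{(\infty)}$ is the core, so $B'=\{a\}$), and your choice has the small bonus that $E\in\mathbf{R}(B)$, whence $\mathbf{V}(B,E)=\mathbf{V}(B)$; the cost is that you must check $(\bigstar)$ at the branch point of the $E$-core and justify that a copy of $E$ can be routed through $f(a)$ (which needs the observation that $A^{(\infty)}\ncong Z$ forces $f$ to be non-injective on $A^{(\infty)}$, so a branch point exists somewhere connected to $f(a)$ — at the same level of unspoken detail as the paper's choice of $A_0$). Second, and more substantively, for $A\in\mathbf{V}(B,E)$ the paper uses only the two-factor product $B\times A^{(\infty)}$, embedding $A$ as $\{(x,\psi(x)):x\in P(a)\}\cup\{(\psi(x),x):x\in A\setminus P(a)\}$ for a retraction $\psi$ of $A$ onto the core, and then disposes of the factor $A^{(\infty)}$ by Lemma~\ref{lemma 4.1} together with Lemma~\ref{pom7}; you instead build the large product $B\times\prod_{i\in I}E$ and redo the fat-component argument of Lemma~\ref{lemma 4.1} inside it. Your route works — the degree bookkeeping for Theorem~\ref{Thm}(a) goes through because every coordinate in $\prod E$ has degree $\infty$, so the degree of a point of the product equals the degree of its $B$-coordinate — but the paper's two-factor trick is more economical, since it delegates all the cardinality work to the already-proved Lemma~\ref{lemma 4.1} and keeps the retract verification trivial. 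One cosmetic slip: the retraction of $A$ onto a copy of $E$ inside $A^{(\infty)}$ is licensed by condition (a) of Theorem~\ref{Thm}, not by (b2), which concerns components disjoint from the retract.
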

\begin{proof}
Let $A^*$ be an algebra from Lemma \ref{lemma 4.4}.
Then $(A^*)'=\{ a\}$ according to Lemma \ref{pom3} and 
$A^*\setminus P(a)$ is not isomorphic to $Z$ according to Lemma \ref{pom6}.
Therefore  the same assumptions as for $A$ are valid for $A^*$.
Further, if $B$ is such that the statement is true for $A^*$, then $B\in \mathbf{R}(A^*)\subseteq \mathbf{R}(A)$ and
$\mathbf{V}(A)=\mathbf{V}(A^*)=\mathbf{V}(B,E)$.

So, we can suppose that $A=(A,f)\in \mathcal{U}^{\bigstar}_c$. 
Obviously $E\in \mathbf{R}(A)$.
Denote by 
 $A_{0}$ a subalgebra of $A$ such that
 $A_{0}\cong Z$ and
 $f(a)\in A_{0}$. Put
$B=A_{0}\cup P(a).$
Then
 $B\in \mathbf{R}(A)\cap \mathcal{S}^{(1)}$.
It yields $\mathbf{V}(B,E)\subseteq \mathbf{V}(A)$ according to Lemma~\ref{pom7}.

There is a retraction homomorphism $\psi$ of $A$ onto $A_{0}$. Denote
$$
T=\{t\in B\times A^{\infty} \colon (\exists x\in P(a)) (t=(x,\psi(x)))\ \ \text{or}\ \ (\exists x\in A-P(a)) (t=(\psi(x),x))\}.
$$
It is easy to see that
$T\cong A$ and $T$ is a retract of $B\times A^{(\infty)}$. Therefore 
 $A\in \mathbf{V}(B, A^{(\infty)})$.
Further, the algebra $A^{(\infty)}$ fulfills the assumptions of Lemma~\ref{lemma 4.1}, hence we obtain
 $A^{(\infty)}\in \mathbf{V}(E)$.
Thus $\mathbf{V}(B, A^{(\infty)})\subseteq \mathbf{V}(B,E)$ according to Lemma~\ref{pom7}.
\end{proof}

Analogously, we can prove
\begin{lemma}
\label{lemma 4.6}
Let $A\in \mathcal{U}_c$ and
$a\in A$ be such that 
$A^{'}=\{a\}$. 
If $A$ possesses a cycle with $n$ elements, $n\in \mathbb{N}$ 
and $A\setminus P(a)\notin \mathcal{S}^{(0)}$,
then there is $B\in \mathbf{R}(A)\cap \mathcal{S}^{(1)}$ such that  
$\mathbf{V}(A)=\mathbf{V}(B,\widehat{n})$. 
\end{lemma}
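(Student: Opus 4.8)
The plan is to transcribe the proof of Lemma~\ref{lemma 4.5} almost verbatim, with $\widehat{n}$ taking over the two roles there played by $Z$ and $E$ — as the connected subalgebra carved out of $A^{(\infty)}$ and as the generator appearing in the conclusion — and with Lemma~\ref{lemma 4.2} used in place of Lemma~\ref{lemma 4.1}. Observe first that $A'=\{a\}$ forces $P(a)=A\setminus A^{(\infty)}$, so $A\setminus P(a)=A^{(\infty)}$; since this algebra is connected, equals its own $(\infty)$-part and carries an $n$-element cycle, it is isomorphic to $\underline{n}$ exactly when $f$ is injective on it, so the hypothesis $A\setminus P(a)\notin\mathcal{S}^{(0)}$ will be used only through its consequence that $f$ is not injective on $A^{(\infty)}$.

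First I would reduce to the case $A\in\mathcal{U}^{\bigstar}_c$: by Lemma~\ref{lemma 4.4} pick $A^{*}\in\mathbf{R}(A)\cap\mathcal{U}^{\bigstar}_c$ with $\mathbf{V}(A)=\mathbf{V}(A^{*})$, and check, as in Lemma~\ref{lemma 4.5}, that $(A^{*})'=\{a\}$ by Lemma~\ref{pom3}, that $A^{*}$ still possesses an $n$-element cycle, and that $A^{*}\setminus P(a)\ncong\underline{n}$ — the last point by feeding the non-injectivity of $f$ on $A\setminus\bigcup_{a'\in A'}P(a')$ into Lemma~\ref{pom6} via $A\in\mathbf{V}(A^{*})$. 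So assume $A\in\mathcal{U}^{\bigstar}_c$. Since $A^{(\infty)}\ncong\underline{n}$, it contains a non-cyclic element; an infinite backward chain from such an element (automatically injective, since the $n$-cycle is the only cycle), extended forward until it meets the cycle, is a subalgebra of $A$ isomorphic to $\widehat{n}$, and if $f(a)$ is non-cyclic we take that chain to start at $f(a)$, whereas if $f(a)$ is cyclic it already lies on the cycle. Either way there is $A_{0}\cong\widehat{n}$ with $A_{0}\subseteq A$, $f(a)\in A_{0}$, and $A_{0}\in\mathbf{R}(A)$ by Theorem~\ref{Thm}. Put $B=A_{0}\cup P(a)$. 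Then $B^{(\infty)}=A_{0}$, $B'=\{a\}$ and $B\setminus P(a)=A_{0}\cong\widehat{n}\in\mathcal{S}^{(0)}$; moreover $B\in\mathcal{U}^{\bigstar}_c$, since a $(\bigstar)$-violating triple in $B$ must lie inside $P(a)$ (no such triple can have a member in $A_{0}$: a cyclic one has $P_{B}=B$, which contains a cycle, a non-cyclic one a cycle-free one-sided chain, and $A_{0}$ being a copy of $\widehat{n}$ no two distinct image-sharing elements of it have isomorphic partial algebras $P_{B}$), hence would already violate $(\bigstar)$ in $A$. Thus $B\in\mathbf{R}(A)\cap\mathcal{S}^{(1)}$, and since also $\widehat{n}\cong A_{0}\in\mathbf{R}(A)$, Lemma~\ref{pom7} gives $\mathbf{V}(B,\widehat{n})\subseteq\mathbf{V}(A)$.

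For the opposite inclusion I would copy the remainder of the proof of Lemma~\ref{lemma 4.5}. Fix a retraction endomorphism $\psi$ of $A$ onto $A_{0}$ and set
\[
T=\{(x,\psi(x))\colon x\in P(a)\}\cup\{(\psi(x),x)\colon x\in A^{(\infty)}\}\subseteq B\times A^{(\infty)}.
\]
Since $\psi$ fixes $A_{0}\ni f(a)$, the assignments $x\mapsto(x,\psi(x))$ on $P(a)$ and $x\mapsto(\psi(x),x)$ on $A^{(\infty)}$ agree on the overlap and define an isomorphism $A\to T$; a routine verification with Theorem~\ref{Thm} shows that $T$ is a retract of $B\times A^{(\infty)}$, so $A\in\mathbf{V}(B,A^{(\infty)})$. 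Finally $A^{(\infty)}$ satisfies the hypotheses of Lemma~\ref{lemma 4.2} (connected, an $n$-element cycle, equal to its own $(\infty)$-part, not isomorphic to $\underline{n}$), hence $A^{(\infty)}\in\mathbf{V}(\widehat{n})$, and Lemma~\ref{pom7} yields $\mathbf{V}(B,A^{(\infty)})\subseteq\mathbf{V}(B,\widehat{n})$. Combining this with the previous paragraph gives $\mathbf{V}(A)=\mathbf{V}(B,\widehat{n})$.

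The step I expect to be the main obstacle is the one dispatched as a routine verification in Lemma~\ref{lemma 4.5}: checking through Theorem~\ref{Thm} that $T$ really is a retract of $B\times A^{(\infty)}$, together with the parallel bookkeeping that the three hypotheses survive the passage to $A^{*}$ — in particular that the cycle length is not lost and that the surgery in Lemma~\ref{lemma 4.4} cannot collapse $A^{(\infty)}$ to a copy of $\underline{n}$. Everything else is a line-by-line copy of the cycle-free argument.
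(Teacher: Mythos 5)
Your proof is correct and is exactly what the paper intends: the paper gives no separate argument for Lemma~\ref{lemma 4.6}, stating only that it is proved ``analogously'' to Lemma~\ref{lemma 4.5}, and your transcription (with $\widehat{n}$ replacing $Z$ and $E$, and Lemma~\ref{lemma 4.2} replacing Lemma~\ref{lemma 4.1}) is that analogue, carried out with somewhat more care than the original on the points where care is needed (existence of the subalgebra $A_0\cong\widehat{n}$ containing $f(a)$, and membership of $B$ in $\mathcal{U}^{\bigstar}_c$).
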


\begin{lemma}
\label{lemma 4.7}
If $A\in \mathcal{U}_c$, then there is a set $I$ and algebras $B_{i}\in \mathbf{R}(A)\cap \mathcal{S}$ for each $i\in I$ such that $\mathbf{V}(A)=\mathbf{V}(\{B_{i}\colon i\in I\})$.
\end{lemma}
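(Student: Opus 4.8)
The plan is to run a case analysis governed by the subalgebra $A^{(\infty)}$ and to reduce each case to the lemmas already proved. Throughout I will use two elementary observations: when $\emptyset\neq A^{(\infty)}\neq A$ the algebra $A$ is the disjoint union of the connected subalgebra $A^{(\infty)}$ with the sets $P(a)$, $a\in A'$ (so that $A\setminus P(a)=A^{(\infty)}$ once $A'=\{a\}$), and consequently $A'=\emptyset$ exactly when $A^{(\infty)}\in\{\emptyset,A\}$; and a retract of a retract of $A$ is again a retract of $A$, so it is enough to exhibit each $B_i$ as a retract of a retract of $A$.

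If $A^{(\infty)}=A$, then either $A\cong\underline n$ (when $A$ carries an $n$-element cycle) or $A\cong Z$ (when it carries no cycle), in which case $A\in\mathcal S^{(0)}$ and $I=\{0\}$, $B_0=A$ works; otherwise Lemma~\ref{lemma 4.2}, resp.\ Lemma~\ref{lemma 4.1}, supplies $\widehat n$, resp.\ $E$, in $\mathbf R(A)\cap\mathcal S^{(0)}$ with $\mathbf V(A)=\mathbf V(\widehat n)$, resp.\ $\mathbf V(E)$. If $A^{(\infty)}=\emptyset$, then Lemma~\ref{lemma 4.4} gives $B\in\mathbf R(A)\cap\mathcal U^{\bigstar}_c$ with $\mathbf V(A)=\mathbf V(B)$; since $B$ is a subalgebra of $A$ we have $B^{(\infty)}\subseteq A^{(\infty)}=\emptyset$, hence $B\in\mathcal S^{(2)}$, and $I=\{0\}$, $B_0=B$ works.

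The main case is $\emptyset\neq A^{(\infty)}\neq A$, i.e.\ $A'\neq\emptyset$. I would first apply Lemma~\ref{lemma 4.3} to reduce to $A'=\{a\}$, so that $A\setminus P(a)=A^{(\infty)}$; the (set-indexed) family of retracts it produces is treated one member at a time and the generating sets are unioned at the end. If $A$ has no cycle and $A\setminus P(a)\ncong Z$, Lemma~\ref{lemma 4.5} gives $B\in\mathbf R(A)\cap\mathcal S^{(1)}$ with $\mathbf V(A)=\mathbf V(B,E)$; if $A$ has an $n$-element cycle and $A\setminus P(a)\notin\mathcal S^{(0)}$, Lemma~\ref{lemma 4.6} gives $B\in\mathbf R(A)\cap\mathcal S^{(1)}$ with $\mathbf V(A)=\mathbf V(B,\widehat n)$; in either case $E$, resp.\ $\widehat n$, belongs to $\mathbf R(A)\cap\mathcal S^{(0)}$, so two generators suffice. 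The remaining boundary situations are $A\setminus P(a)\cong Z$ (acyclic case) and $A\setminus P(a)\cong\underline n$ or $A\setminus P(a)\cong\widehat n$ (cyclic case); here I would apply Lemma~\ref{lemma 4.4} to obtain $A^{*}\in\mathbf R(A)\cap\mathcal U^{\bigstar}_c$ with $\mathbf V(A)=\mathbf V(A^{*})$, and then verify — by inspecting the construction in the proof of Lemma~\ref{lemma 4.4} — that every set $U_i$ with $\card U_i>2$ is contained in $P(a)$. For this it is enough to see that a set $U(x)$ meeting the core has at most two elements: its common image lies in the core, and the preimages of a core point consist of the core-preimages — just one when $A\setminus P(a)$ is $\cong Z$ or $\cong\underline n$, and two with non-isomorphic $P$-trees (one carrying the cycle, one not) when $A\setminus P(a)\cong\widehat n$ — together with at most the single element $a$. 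Consequently $(A^{*})^{(\infty)}=A^{(\infty)}$ is left literally intact and $(A^{*})'$ is a singleton, so $A^{*}\in\mathcal S^{(1)}$ and one generator suffices.

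To finish the main case I would let $I$ be the (set-sized) union, over the family of singleton-$A'$ retracts produced by Lemma~\ref{lemma 4.3}, of the one or two generators found for each; every generator then lies in $\mathbf R(A)\cap\mathcal S$, and $\mathbf V(A)=\mathbf V(\{B_i\colon i\in I\})$ follows by applying Lemma~\ref{pom7} in both directions. The step I expect to be the real obstacle is the last one: checking that passing to the $(\bigstar)$-reduction of Lemma~\ref{lemma 4.4} leaves the core of the algebra literally isomorphic to the member of $\mathcal S^{(0)}$ it started as, which is precisely what keeps $A^{*}$ in $\mathcal S^{(1)}$ — and thus avoids replacing the core and forcing in a superfluous generator that could enlarge the retract variety. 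The remainder is routine combination of the earlier lemmas, together with the observations that $E$ and $\widehat n$ occur as retracts of $A$ where claimed.
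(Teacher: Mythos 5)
Your proof is correct, and it uses the same toolbox as the paper (the case split on $A^{(\infty)}$ and Lemmas~\ref{lemma 4.1}--\ref{lemma 4.6}), but the two arguments order the reductions differently, and the difference is not cosmetic. The paper applies the $(\bigstar)$-reduction of Lemma~\ref{lemma 4.4} \emph{first}, and only then splits off the trees $P(b_j)$ via Lemma~\ref{lemma 4.3}; this way the boundary case $B_j\setminus P(b_j)\in\mathcal{S}^{(0)}$ costs nothing, since such a $B_j$ is claimed to lie in $\mathcal{U}^{\bigstar}_c$ already and hence in $\mathcal{S}^{(1)}$ by definition. You apply Lemma~\ref{lemma 4.3} first and push the $(\bigstar)$-reduction into the individual pieces, which forces you to handle the boundary situations $A\setminus P(a)\cong Z,\underline{n},\widehat{n}$ by hand; the price is that the bare statement of Lemma~\ref{lemma 4.4} is not enough there, and you must open its proof to check that every class $U_i$ with $\card U_i>2$ is contained in $P(a)\setminus\{a\}$, so that the reduction leaves $A^{(\infty)}$ and the point $a$ untouched and lands in $\mathcal{S}^{(1)}$. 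You identify this as the critical step and your justification of it is sound: a core point has at most two preimages inside $A^{(\infty)}$ (one for $Z$ and $\underline{n}$ by injectivity, two with non-isomorphic $P$-trees for $\widehat{n}$), the only non-core preimage of a core point is $a$ itself, and $P(a)\in\mathcal{T}$ while $P(x)\notin\mathcal{T}$ for core $x$, so no $U(x)$ meeting $A^{(\infty)}\cup\{a\}$ can exceed two elements. What your ordering buys is that each application of the $(\bigstar)$-reduction is made to an algebra with a single tree attached to the core, where the verification that the pieces inherit $(\bigstar)$ is fully explicit; what the paper's ordering buys is that the boundary case disappears. Both are valid routes to the statement.
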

\begin{proof}
If $A\in \mathcal{S}^{(0)}$, then the assertion holds. 
Let $A\notin \mathcal{S}^{(0)}$. 
By Lemma \ref{lemma 4.4} there is an algebra $B\in \mathbf{R}(A)\cap \mathcal{U}^{\bigstar}_c$ such that $$\mathbf{V}(A)=\mathbf{V}(B).$$
If $B\in \mathcal{S}$,
then the proof is finished; therefore let $B\notin \mathcal{S}$. 
Then
$B^{(\infty)}\neq\emptyset$ since $B\notin \mathcal{S}^{(2)}$.

Assume that $A$ has a cycle of length $n$, $n\in \mathbb{N}$.

Let $B=B^{(\infty)}$. 
Then Lemma
\ref{lemma 4.2} implies that $\mathbf{V}(A)=\mathbf{V}(B)=\mathbf{V}(\widehat{n})$.

Suppose that $B\neq B^{(\infty)}$. 
Then $B^{'}\neq\emptyset$ and 
Lemma~\ref{lemma 4.3} yields that there is a set $J$ and algebras $B_{j}\in \mathbf{R}(B)$ with $\card(B^{'}_{j}) =1$ for each $j\in J$ such that
$$\mathbf{V}(B)=\mathbf{V}(\{B_{j}\colon j\in J\}).$$
Assume that $j\in J$. 
Then $B_j\in \mathcal{U}^{\bigstar}_c$  according to $B_j\in \mathbf{R}(B)$ and $B\in \mathcal{U}^{\bigstar}_c$.
Further, $B'_j=\{ b_j\}$ for some $b_j\in B_j$ since $\card(B^{'}_{j}) =1$.
If $B_{j}\setminus P(b_{j})\in \mathcal{S}^{(0)}$, then $B_j \in \mathcal{S}^{(1)}$ by the definition of $\mathcal{S}^{(1)}$.
If
$B_{j}\setminus P(b_{j})\notin \mathcal{S}^{(0)}$, then in view of \ref{lemma 4.6} there is 
$C_{j}\in \mathbf{R}(B_j)\cap \mathcal{S}^{(1)}$ such that
$$\mathbf{V}(B_j)=\mathbf{V}(C_{j},\widehat{n}).$$
Put 
\begin{center}
$J_{2}=\{j\in J\colon B_{j}\in \mathcal{S}^{(1)}\}$, 
$J_{1}=J\setminus J_{2}$.
\end{center}
If $J_{1}=\emptyset$, then 
the proof is finished.

If $J_{1}\neq \emptyset$, then the set
$$M=\{B_{j}\colon j\in J_{2}\}\cup \{C_{j}\colon j\in J_{1}\}\cup \{\widehat{n}\}
\subset \mathcal{S}^{(0)}\cup \mathcal{S}^{(1)}\subset \mathcal{S}$$ 
and 
$$\mathbf{V}(\{B_{j}\colon j\in J\})=\mathbf{V}(M)$$
according to Lemma~\ref{pom7}.
This completes the proof for $A$ with a cycle. 

If $A$ has no cycle, then we can proceed analogously, use algebras $Z$, $E$ 
and Lemmas \ref{lemma 4.1}, \ref{lemma 4.3} and \ref{lemma 4.5}.
\end{proof}

\begin{corollary}
\label{S}
Let $\mathcal{K}\subseteq \mathcal{U}_c$. 
Then there exists $\mathcal{L}\subseteq \mathcal{S}$ such that $\mathbf{V}(\mathcal{L})=\mathbf{V}(\mathcal{K})$.
\end{corollary}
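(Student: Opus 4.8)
The plan is to reduce the statement directly to Lemma~\ref{lemma 4.7}, with Lemma~\ref{pom7} doing the gluing. First, for each $A\in\mathcal{K}$, Lemma~\ref{lemma 4.7} provides a set $I_A$ and algebras $B^A_i\in\mathbf{R}(A)\cap\mathcal{S}$ for $i\in I_A$ with $\mathbf{V}(A)=\mathbf{V}(\{B^A_i\colon i\in I_A\})$. I would fix one such witnessing family for every $A\in\mathcal{K}$ and set $\mathcal{L}=\bigcup_{A\in\mathcal{K}}\{B^A_i\colon i\in I_A\}$, so that $\mathcal{L}\subseteq\mathcal{S}$ holds by construction.

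Next I would verify $\mathbf{V}(\mathcal{L})\subseteq\mathbf{V}(\mathcal{K})$. Every member of $\mathcal{L}$ has the form $B^A_i$ with $B^A_i\in\mathbf{R}(A)$ for some $A\in\mathcal{K}$, hence $B^A_i\in\mathbf{R}(A)\subseteq\mathbf{V}(A)\subseteq\mathbf{V}(\mathcal{K})$. Since this holds for every element of $\mathcal{L}$, Lemma~\ref{pom7} gives $\mathbf{V}(\mathcal{L})\subseteq\mathbf{V}(\mathcal{K})$.

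Finally I would verify the reverse inclusion. Fix $A\in\mathcal{K}$. Then $A\in\mathbf{V}(A)=\mathbf{V}(\{B^A_i\colon i\in I_A\})\subseteq\mathbf{V}(\mathcal{L})$, the last inclusion because $\{B^A_i\colon i\in I_A\}\subseteq\mathcal{L}$ and $\mathbf{V}$ is monotone (again Lemma~\ref{pom7}, or directly from $\mathbf{V}=\mathbf{RP}$). Thus $A\in\mathbf{V}(\mathcal{L})$ for every $A\in\mathcal{K}$, and Lemma~\ref{pom7} yields $\mathbf{V}(\mathcal{K})\subseteq\mathbf{V}(\mathcal{L})$. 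Combining the two inclusions gives $\mathbf{V}(\mathcal{L})=\mathbf{V}(\mathcal{K})$.

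There is essentially no obstacle beyond bookkeeping: all the substance is contained in Lemma~\ref{lemma 4.7}. The only point deserving a word of care is that $\mathcal{K}$, and hence $\mathcal{L}$, may be a proper class; phrasing the construction as ``choose one witnessing family for each $A$ and take the union of their ranges'' avoids forming a class of families and keeps the argument clean.
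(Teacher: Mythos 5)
Your proof is correct and is exactly the argument the paper intends: Corollary~\ref{S} is stated without proof as an immediate consequence of Lemma~\ref{lemma 4.7}, obtained by taking the union of the witnessing families over all $A\in\mathcal{K}$ and gluing with Lemma~\ref{pom7}, just as you do. Your remark about $\mathcal{K}$ possibly being a proper class is a reasonable extra care that the paper itself glosses over.
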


\subsection{Retract varieties generated by a set of connected algebras}

Let $\beta\in\ord$.
The class $\mathcal{M}_{\beta}$ is defined in Notation~\ref{notation 2.3}.
\begin{notation}
{\rm{
 Denote  

$\mathcal{S}_{\beta} =\mathcal{M}_{\beta}\cap \mathcal{S}$,
$\mathcal{S}_{\beta}^{(1)} =\mathcal{M}_{\beta}\cap \mathcal{S}^{(1)}$,
$\mathcal{S}_{\beta}^{(2)} =\mathcal{M}_{\beta}\cap \mathcal{S}^{(2)}$.
}}
\end{notation}

We remark that $\mathcal{M}_{\beta}\cap \mathcal{S}^{(0)}= \mathcal{S}^{(0)}$ for every $\beta\in\ord$. 

\begin{lemma}
Let $\beta\in\ord$. Then  
$\mathcal{S}_{\beta}^{(1)}/\cong$ is a set and 
$\card(\mathcal{S}_{\beta}^{(1)}/\cong)\leqq \tau(\beta).$ 
\end{lemma}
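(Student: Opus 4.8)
The plan is to decompose each $A\in\mathcal{S}_\beta^{(1)}$ into its ``infinite core'' $A^{(\infty)}$ and a ``tree'' $P(a)$ hanging below the unique element $a$ of $A'$, and to bound the number of isomorphism types available for each piece, together with the attachment datum. Fix $A\in\mathcal{S}_\beta^{(1)}$ and let $a$ be the element with $A'=\{a\}$. First I would record a few elementary structural facts. Since $f^{-1}(P(a))=\bigcup_{n\geqq 1}f^{-n}(a)\subseteq P(a)$, the set $A\setminus P(a)$ is a subalgebra of $A$. As $A^{(\infty)}$ is forward closed and $a\notin A^{(\infty)}$, every $x\in P(a)$ (which has $f^{k}(x)=a$ for some $k$) lies outside $A^{(\infty)}$, so $P(a)\cap A^{(\infty)}=\emptyset$. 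Conversely, using connectedness of $A$ and $f(a)\in A^{(\infty)}\neq\emptyset$, the forward orbit of any $x\notin A^{(\infty)}$ enters $A^{(\infty)}$, and the last point before entering must lie in $A'=\{a\}$, so $x\in P(a)$; hence $A\setminus P(a)=A^{(\infty)}$. By the definition of $\mathcal{S}^{(1)}$ this means $A^{(\infty)}\in\mathcal{S}^{(0)}$, and since $\mathcal{S}^{(0)}=[Z,E]\cup\bigcup_{n\in\mathbb{N}}[\underline n,\widehat n]$ there are only $\aleph_{0}$ isomorphism types for $A^{(\infty)}$.

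Next I would analyse $(P(a),f\!\upharpoonright\! P(a))$ as a partial algebra. Because $a\notin A^{(\infty)}$, $a$ is not a cyclic element of $A$, so $(P(a),f\!\upharpoonright\! P(a))\in\mathcal{T}$; and it inherits condition $(\bigstar)$ from $A$, since for $x\in P(a)$ the backward orbit of $x$ stays inside $P(a)$ and $P_{A}(x)=P_{P(a)}(x)$, so $(P(a),f\!\upharpoonright\! P(a))\in\mathcal{T}^{\bigstar}$. For the same reason $\s_{f\upharpoonright P(a)}(c_{P(a)})=\s_{f}(a)$, and $a\in A\setminus A^{(\infty)}$ together with $A\in\mathcal{M}_{\beta}$ gives $\s_{f}(a)\leqq\beta$. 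Hence $(P(a),f\!\upharpoonright\! P(a))$ is either the one-element partial algebra (the case $\s_{f}(a)=0$) or a member of $\mathcal{T}_{\beta}$. By Lemma~\ref{tau-beta-lemma}, $\mathcal{T}_{\beta}/\cong$ is a set of cardinality at most $\tau(\beta)$, so there are at most $\tau(\beta)+1=\tau(\beta)$ isomorphism types for $(P(a),f\!\upharpoonright\! P(a))$ (recall $\tau(\beta)\geqq\aleph_{0}$).

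Finally I would reconstruct $A$ from these data. As a set $A=A^{(\infty)}\cup P(a)$ with the two parts disjoint, the operation of $A$ restricts to that of $A^{(\infty)}$ on the first part, to $f\!\upharpoonright\! P(a)$ on $P(a)\setminus\{a\}$, and sends $a$ to the point $f(a)\in A^{(\infty)}$; that is, $A$ is obtained by ``gluing'' $A^{(\infty)}$ and the partial algebra $P(a)$ by defining $f$ at the previously undefined element $c_{P(a)}=a$ to equal $f(a)$. So I would fix a set of representatives $\{C_{k}:k\in K\}$ for $\mathcal{S}^{(0)}/\cong$ (with $\card K=\aleph_{0}$) and a set of representatives $\{Q_{l}:l\in L\}$ for $(\mathcal{T}_{\beta}/\cong)$ together with the one-element partial algebra (with $\card L\leqq\tau(\beta)$), and check that every $A\in\mathcal{S}_{\beta}^{(1)}$ is isomorphic to the gluing of some $C_{k}$ and some $Q_{l}$ along some $p\in C_{k}$ (combine an isomorphism $C_{k}\to A^{(\infty)}$ with one $Q_{l}\to P(a)$ carrying $c_{Q_{l}}$ to $a$, and take $p$ to be the preimage of $f(a)$). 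Choosing one such triple $(k,p,l)$ for each isomorphism class then injects $\mathcal{S}_{\beta}^{(1)}/\cong$ into the set $\{(k,p,l):k\in K,\ p\in C_{k},\ l\in L\}$, which has cardinality at most $\aleph_{0}\cdot\aleph_{0}\cdot\tau(\beta)=\tau(\beta)$ because each $C_{k}$ is countable. This yields that $\mathcal{S}_{\beta}^{(1)}/\cong$ is a set and $\card(\mathcal{S}_{\beta}^{(1)}/\cong)\leqq\tau(\beta)$.

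The main obstacle is making the ``gluing'' description genuinely complete and canonical: one must verify carefully that $A=A^{(\infty)}\,\cup\,P(a)$ is a disjoint decomposition and that the operation acts exactly as described on each block, so that the isomorphism type of $A$ is recovered from the triple $(k,p,l)$; the rest is routine, including the cardinal bookkeeping that absorbs the two $\aleph_{0}$-sized choices into $\tau(\beta)\geqq\aleph_{0}$, while the bound on $\card(\mathcal{T}_{\beta}/\cong)$ is already supplied by Lemma~\ref{tau-beta-lemma}.
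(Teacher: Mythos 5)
Your proof is correct and follows essentially the same strategy as the paper: inject $\mathcal{S}_{\beta}^{(1)}/\cong$ into a product of a countable set of ``core plus attachment point'' data with $\mathcal{T}_{\beta}/\cong$, and then apply the bound $\card(\mathcal{T}_{\beta}/\cong)\leqq\tau(\beta)$. The only cosmetic difference is that the paper encodes the attachment datum by numerical invariants in $(\mathbb{N}\cup\{0\})\times\mathbb{Z}$ via a case analysis on whether $A\setminus P(a)$ is $Z$, $\underline{n}$ or $\widehat{n}$, whereas you use a point of a fixed countable representative of $\mathcal{S}^{(0)}/\cong$, which gives the same cardinality bound.
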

\begin{proof}
We define a mapping 
$$
\varphi\colon (\mathcal{S}_{\beta}^{(1)}/\cong)\to 
(\mathbb{N}\cup \{0\})\times\mathbb{Z}\times(\mathcal{T}_{\beta}/\cong)
$$
as follows. 
Let $A\in \mathcal{S}^{(1)} ,A^{'}=\{a\}$.
If $A\setminus P(a)\cong Z$, then put 
$$
\varphi([A])=(0,0,[P(a)]).
$$
If $A\setminus P(a)\cong \underline{n}$ for some $n\in \mathbb{N}$, then put 
$$
\varphi([A])=(n,0,[P(a)]).
$$
If $A\setminus P(a)\cong \widehat{n}$ for some $n\in \mathbb{N}$, then there is a uniquely determined $k\in \mathbb{N}\cup\{0\}$ such that $f^{k}(a)$ does not belong to a cycle, $f^{k+1}(a)$ belongs to a cycle; put 
$$
\varphi([A])=(n,k+1,[P(a)]).
$$
The mapping $\varphi$ is injective, therefore $\mathcal{S}_{\beta}^{(1)}/\cong$ is a set and Lemma~\ref{tau-beta-lemma} implies
$$
\card(\mathcal{S}_{\beta}^{(1)}/\cong)\leqq \aleph_{0}\times\aleph_{0}\times \tau(\beta)=\tau(\beta).
$$
\end{proof}

\begin{lemma}
Let $\beta\in\ord$. Then  
$\mathcal{S}_{\beta}^{2}/\cong$ is a set and 
$\card(\mathcal{S}_{\beta}^{(2)}/\cong)\leqq (\tau(\beta))^{\aleph_{0}}.$ 
\end{lemma}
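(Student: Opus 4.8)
The plan is to encode each $A\in\mathcal{S}_{\beta}^{(2)}$, up to isomorphism, by a countable sequence of isomorphism types of algebras from $\mathcal{T}_{\beta}$, in the spirit of the previous lemma. I would first record two easy facts: a connected monounary algebra with $A^{(\infty)}=\emptyset$ has no cycle (a cyclic element admits an infinite backward sequence and hence lies in $A^{(\infty)}$), so in particular $\card A\geqq 2$; and, since $A^{(\infty)}=\emptyset$, the definition of $\mathcal{M}_{\beta}$ gives $\s_{f}(x)\leqq\beta$ for \emph{every} $x\in A$. Now fix $a_{0}\in A$ with $f^{-1}(a_{0})\neq\emptyset$ (possible since $A$ has no cycle and $\card A\geqq 2$) and put $a_{n}=f^{n}(a_{0})$ for $n\in\mathbb{N}$. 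Then the $a_{n}$ are pairwise distinct, $P(a_{0})\subseteq P(a_{1})\subseteq\cdots$ since $a_{n-1}\in f^{-1}(a_{n})$, and $A=\bigcup_{n\in\mathbb{N}\cup\{0\}}P(a_{n})$ by connectedness. For each $n$ the partial algebra $(P(a_{n}),f\!\upharpoonright\! P(a_{n}))$ is connected with $a_{n}$ as its unique element outside the domain, $f^{-1}(a_{n})\neq\emptyset$, and $\s_{f}(a_{n})\leqq\beta$ (the degree of $a_{n}$ does not change on passing to $P(a_{n})$); moreover it satisfies ($\bigstar$), because $P(y)$ formed in $A$ is contained in $P(a_{n})$ for every $y\in P(a_{n})$, so that ($\bigstar$) for $A$ restricts to ($\bigstar$) for $P(a_{n})$. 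Hence $(P(a_{n}),f\!\upharpoonright\! P(a_{n}))\in\mathcal{T}_{\beta}$.

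Choosing, for each class in $\mathcal{S}_{\beta}^{(2)}/\cong$, a representative $A$ together with such an $a_{0}$, we obtain a map
$$
\sigma\colon\ \mathcal{S}_{\beta}^{(2)}/\cong\ \longrightarrow\ (\mathcal{T}_{\beta}/\cong)^{\mathbb{N}\cup\{0\}},\qquad
\sigma([A])=\bigl(\,[(P(a_{n}),f\!\upharpoonright\! P(a_{n}))]\,\bigr)_{n\in\mathbb{N}\cup\{0\}}.
$$
The crucial step is that $\sigma$ is injective. Suppose $A,A'$ have the same $\sigma$-image, so $P(a_{k})\cong P(a'_{k})$ for all $k$, where $(a_{n})$, $(a'_{n})$ are the associated chains. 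I would build isomorphisms $\phi_{n}\colon P(a_{n})\to P(a'_{n})$ with $\phi_{n+1}\!\upharpoonright\! P(a_{n})=\phi_{n}$ by induction on $n$; then $\phi=\bigcup_{n}\phi_{n}$ is an isomorphism of $A=\bigcup_{n}P(a_{n})$ onto $A'=\bigcup_{n}P(a'_{n})$. For the inductive step, start from any isomorphism $\psi\colon P(a_{n+1})\to P(a'_{n+1})$. It sends the root $a_{n+1}$ to the root $a'_{n+1}$, so $\psi(a_{n})\in f^{-1}(a'_{n+1})$ and $P(\psi(a_{n}))$, formed inside $P(a'_{n+1})$, is isomorphic to $P(a_{n})$, hence to $P(a'_{n})$. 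By ($\bigstar$) there are at most two $u\in f^{-1}(a'_{n+1})$ with $P(u)\cong P(a'_{n})$; composing $\psi$ with the automorphism of $P(a'_{n+1})$ that is the identity off these (at most two) subtrees $P(u)$ and, when there are two of them, a fixed isomorphism together with its inverse on those two subtrees, we may assume $\psi(a_{n})=a'_{n}$. Then $\psi\!\upharpoonright\! P(a_{n})$ is an isomorphism $P(a_{n})\to P(a'_{n})$, and composing $\psi$ once more with the automorphism of $P(a'_{n+1})$ obtained by extending $\phi_{n}\circ(\psi\!\upharpoonright\! P(a_{n}))^{-1}$ by the identity on $P(a'_{n+1})\setminus P(a'_{n})$ — this is an automorphism since any automorphism of $P(a'_{n})$ fixes its root $a'_{n}$ — we obtain the desired $\phi_{n+1}$.

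Once $\sigma$ is shown injective, $\mathcal{S}_{\beta}^{(2)}/\cong$ embeds into the set $(\mathcal{T}_{\beta}/\cong)^{\mathbb{N}\cup\{0\}}$, which is a set by Lemma~\ref{tau-beta-lemma}; hence $\mathcal{S}_{\beta}^{(2)}/\cong$ is a set and $\card(\mathcal{S}_{\beta}^{(2)}/\cong)\leqq\card\bigl((\mathcal{T}_{\beta}/\cong)^{\mathbb{N}\cup\{0\}}\bigr)\leqq(\tau(\beta))^{\aleph_{0}}$. The main obstacle is the injectivity of $\sigma$: the limit (back-and-forth) construction of the isomorphism, and in particular the use of ($\bigstar$) to pin down, up to a harmless two-fold ambiguity, how $P(a_{n})$ is embedded in $P(a_{n+1})$, together with checking that the correcting maps are genuine automorphisms of $P(a'_{n+1})$.
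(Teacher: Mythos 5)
Your proof takes essentially the same route as the paper: both encode $A\in\mathcal{S}_{\beta}^{(2)}$ by the sequence of isomorphism types $\bigl([P(f^{n}(a))]\bigr)_{n}$ in $(\mathcal{T}_{\beta}/\cong)^{\mathbb{N}}$ and conclude via Lemma~\ref{tau-beta-lemma}. The only difference is that the paper merely asserts the injectivity of this encoding, whereas you supply the inductive tower-of-compatible-isomorphisms argument (and take the minor extra care of choosing $a_{0}$ with $f^{-1}(a_{0})\neq\emptyset$ so that $P(a_{0})\in\mathcal{T}_{\beta}$); that argument is correct.
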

\begin{proof}
For a class $[A]\in \mathcal{S}_{\beta}^{(2)}/\cong$ take a fixed representant $\overline{A}$ of this class and let $a$ be an arbitrary (fixed) element of $\overline{A}$. We define a mapping 
$$
\varphi\colon (\mathcal{S}_{\beta}^{(2)}/\cong)\to (\mathcal{T}_{\beta}/\cong)^{\mathbb N}
$$
as follows. If $[A]\in \mathcal{S}_{\beta}^{(2)}/\cong$, then put
$$
\varphi([A])=(P(a),P(f(a)),P(f^{2}(a)),\dots ).
$$
The mapping $\varphi$ is injective, therefore $\mathcal{S}_{\beta}^{7}/\cong$ is a set and we obtain in view of Lemma~\ref{tau-beta-lemma},
$$
\card(\mathcal{S}_{\beta}^{(2)}/\cong)\leqq (\tau(\beta))^{\aleph_{0}}.
$$
\end{proof}

\begin{corollary}
\label{corollary 4.0.1}
Let $\beta\in \ord$. Then 
$\mathcal{S}_{\beta}/\cong$ is a set and
$\card(\mathcal{S}_{\beta}/\cong)\leqq (\tau(\beta))^{\aleph_{0}}$. 

\end{corollary}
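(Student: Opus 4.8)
The plan is to reduce the statement to the three preceding cardinality estimates by splitting $\mathcal{S}_{\beta}$ along the decomposition $\mathcal{S}=\mathcal{S}^{(0)}\cup\mathcal{S}^{(1)}\cup\mathcal{S}^{(2)}$. First I would observe that intersection distributes over union, so
\[
\mathcal{S}_{\beta}=\mathcal{M}_{\beta}\cap\mathcal{S}=(\mathcal{M}_{\beta}\cap\mathcal{S}^{(0)})\cup(\mathcal{M}_{\beta}\cap\mathcal{S}^{(1)})\cup(\mathcal{M}_{\beta}\cap\mathcal{S}^{(2)})=\mathcal{S}^{(0)}\cup\mathcal{S}_{\beta}^{(1)}\cup\mathcal{S}_{\beta}^{(2)},
\]
where the last equality uses the remark preceding the corollary that $\mathcal{M}_{\beta}\cap\mathcal{S}^{(0)}=\mathcal{S}^{(0)}$, together with the definitions of $\mathcal{S}_{\beta}^{(1)}$ and $\mathcal{S}_{\beta}^{(2)}$. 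Passing to isomorphism classes, this gives $(\mathcal{S}_{\beta}/\cong)=(\mathcal{S}^{(0)}/\cong)\cup(\mathcal{S}_{\beta}^{(1)}/\cong)\cup(\mathcal{S}_{\beta}^{(2)}/\cong)$.

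Next I would check that each of the three pieces is a set. For $\mathcal{S}^{(0)}/\cong$ this is immediate from the definition $\mathcal{S}^{(0)}=[Z,E]\cup\bigcup_{n\in\mathbb{N}}[\underline{n},\widehat{n}]$: it consists of the isomorphism types of $Z$, of $E$, and of $\underline{n},\widehat{n}$ for $n\in\mathbb{N}$, hence $\card(\mathcal{S}^{(0)}/\cong)=\aleph_{0}$. For the remaining two pieces I invoke the two lemmas immediately preceding the corollary, which state that $\mathcal{S}_{\beta}^{(1)}/\cong$ is a set with $\card(\mathcal{S}_{\beta}^{(1)}/\cong)\leqq\tau(\beta)$ and that $\mathcal{S}_{\beta}^{(2)}/\cong$ is a set with $\card(\mathcal{S}_{\beta}^{(2)}/\cong)\leqq(\tau(\beta))^{\aleph_{0}}$. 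A union of three sets is a set, so $\mathcal{S}_{\beta}/\cong$ is a set.

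Finally I would assemble the cardinal bound. From the displayed decomposition,
\[
\card(\mathcal{S}_{\beta}/\cong)\leqq\aleph_{0}+\tau(\beta)+(\tau(\beta))^{\aleph_{0}}.
\]
To collapse the right-hand side I use that $\tau(\beta)\geqq\aleph_{0}$ for every $\beta\in\ord$: by Notation~\ref{tau-beta-notation}, $\tau(n)=\aleph_{0}$ when $n$ is finite, and when $\beta$ is infinite $\tau(\beta)$ is a supremum of cardinals of the form $2^{\tau(\alpha)}\geqq\aleph_{0}$. Hence $\aleph_{0}\leqq\tau(\beta)\leqq(\tau(\beta))^{\aleph_{0}}$, so the sum above equals $(\tau(\beta))^{\aleph_{0}}$, which is the desired bound. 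There is essentially no obstacle in this argument; the only point requiring a word of care is the elementary cardinal arithmetic that absorbs the first two summands into the third, and that rests precisely on the inequality $\tau(\beta)\geqq\aleph_{0}$.
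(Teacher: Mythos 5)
Your proof is correct and is exactly the argument the paper intends: the corollary is stated without proof immediately after the remark that $\mathcal{M}_{\beta}\cap\mathcal{S}^{(0)}=\mathcal{S}^{(0)}$ and the two lemmas bounding $\card(\mathcal{S}_{\beta}^{(1)}/\cong)$ and $\card(\mathcal{S}_{\beta}^{(2)}/\cong)$, so your decomposition and the absorption of $\aleph_{0}+\tau(\beta)$ into $(\tau(\beta))^{\aleph_{0}}$ via $\aleph_{0}\leqq\tau(\beta)$ is precisely the intended reasoning.
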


\begin{proposition}
Let $\mathcal{K}\subseteq \mathcal{U}_c$. 
The following conditions are equivalent:
\begin{enumerate}
\item[\rm{(i)}] $\mathbf{V}(\mathcal{K})$ is set-principal,
\item[\rm{(ii)}] there is $\beta\in\ord$ such that 
$\mathcal{K}\subseteq \mathcal{M}_{\beta}$.
\end{enumerate} 
\end{proposition}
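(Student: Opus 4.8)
The plan is to prove the two implications separately. For (i) $\Rightarrow$ (ii): if $\mathbf{V}(\mathcal{K})$ is set-principal, then Proposition~\ref{prop_SP} supplies an ordinal $\beta$ with $\mathbf{V}(\mathcal{K}) \subseteq \mathcal{M}_{\beta}$, and $\mathcal{K} \subseteq \mathbf{V}(\mathcal{K})$ gives $\mathcal{K} \subseteq \mathcal{M}_{\beta}$.

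For (ii) $\Rightarrow$ (i) I would first record an auxiliary fact: \emph{$\mathbf{R}(A) \subseteq \mathcal{M}_{\beta}$ whenever $A \in \mathcal{M}_{\beta}$.} To see this, let $B$ be a subalgebra of $A$ that is a retract of $A$, with retraction endomorphism $h\colon A\to B$. First I would check $B^{(\infty)} = B \cap A^{(\infty)}$: the inclusion $\subseteq$ is clear, and for $\supseteq$, if $x\in B\cap A^{(\infty)}$ is witnessed by a sequence $x_{0}=x,x_{1},x_{2},\dots$ in $A$ with $f(x_{n})=x_{n-1}$, then $h(x_{0})=x,\ h(x_{1}),\ h(x_{2}),\dots$ is a sequence in $B$ with $f(h(x_{n}))=h(f(x_{n}))=h(x_{n-1})$, so $x\in B^{(\infty)}$. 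Hence $B\setminus B^{(\infty)}=B\setminus A^{(\infty)}$, and on this set transfinite induction on the degree of $x$ computed in $A$ shows that the degree of $x$ computed in $B$ is no larger: every $f$-preimage of $x$ inside $B$ is also an $f$-preimage of $x$ inside $A$, hence has strictly smaller $A$-degree, and being then outside $A^{(\infty)}$ (so outside $B^{(\infty)}$) it has strictly smaller $B$-degree by the induction hypothesis; taking the supremum bounds the $B$-degree of $x$ by its $A$-degree. Since $A\in\mathcal{M}_{\beta}$ bounds the $A$-degree on $A\setminus A^{(\infty)}$ by $\beta$, it follows that $B\in\mathcal{M}_{\beta}$.

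Granting this, assume $\mathcal{K}\subseteq\mathcal{M}_{\beta}$. For each $A\in\mathcal{K}$, Lemma~\ref{lemma 4.7} yields a set $I_{A}$ and algebras $B_{i}^{A}\in\mathbf{R}(A)\cap\mathcal{S}$ ($i\in I_{A}$) with $\mathbf{V}(A)=\mathbf{V}(\{B_{i}^{A}\colon i\in I_{A}\})$; by the auxiliary fact, $B_{i}^{A}\in\mathcal{M}_{\beta}\cap\mathcal{S}=\mathcal{S}_{\beta}$. Put $\mathcal{L}=\bigcup_{A\in\mathcal{K}}\{B_{i}^{A}\colon i\in I_{A}\}\subseteq\mathcal{S}_{\beta}$. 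Then $\mathbf{V}(\mathcal{L})=\mathbf{V}(\mathcal{K})$ by Lemma~\ref{pom7} in both directions, since each $A\in\mathcal{K}$ lies in $\mathbf{V}(\{B_{i}^{A}\colon i\in I_{A}\})\subseteq\mathbf{V}(\mathcal{L})$ and each $B_{i}^{A}$ lies in $\mathbf{R}(A)\subseteq\mathbf{V}(A)\subseteq\mathbf{V}(\mathcal{K})$. By Corollary~\ref{corollary 4.0.1}, $\mathcal{S}_{\beta}/\cong$ is a set, so $\mathcal{L}$ realizes only set-many isomorphism types; picking one algebra of each such type produces a \emph{set} $\mathcal{L}_{0}$ with $\mathbf{V}(\mathcal{L}_{0})=\mathbf{V}(\mathcal{L})$ (once more by Lemma~\ref{pom7}, using that $\mathbf{R}$ absorbs isomorphic copies). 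Hence $\mathbf{V}(\mathcal{K})=\mathbf{V}(\mathcal{L}_{0})$ is generated by a set, i.e.\ set-principal.

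The one step that is not a formal manipulation is the closure $\mathbf{R}(A)\subseteq\mathcal{M}_{\beta}$, and inside it the observation that a retraction can only decrease degrees and cannot move an element out of the well-founded part — which is exactly why $B^{(\infty)}=B\cap A^{(\infty)}$. The remainder is a routine combination of Lemma~\ref{lemma 4.7}, Corollary~\ref{corollary 4.0.1}, Lemma~\ref{pom7}, and Proposition~\ref{prop_SP}.
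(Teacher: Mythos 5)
Your proof is correct and follows essentially the same route as the paper: (i)$\Rightarrow$(ii) via Proposition~\ref{prop_SP}, and (ii)$\Rightarrow$(i) by decomposing each $A\in\mathcal{K}$ through Lemma~\ref{lemma 4.7} into generators in $\mathbf{R}(A)\cap\mathcal{S}\subseteq\mathcal{S}_{\beta}$ and invoking Corollary~\ref{corollary 4.0.1}. The only difference is that you explicitly verify the closure $\mathbf{R}(A)\subseteq\mathcal{M}_{\beta}$ (via $B^{(\infty)}=B\cap A^{(\infty)}$ and the degree comparison), a step the paper asserts without proof; your argument for it is sound.
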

\begin{proof}
Let (ii) hold. If $A\in\mathcal{K}$, then Lemma~\ref{lemma 4.7} implies that there are a set $I_{A}$ and algebras 
$B_{i}^{A}\in \mathbf{R}(A)\cap \mathcal{S}$ for $i\in I_{A}$ such that
$A\in \mathbf{V}(\{B_{i}^{A}\colon i\in I_{A}\}) $. 
 
We get 
$$
\mathbf{V}(\mathcal{K})=\mathbf{V}(\{A\colon A\in\mathcal{K}\})=\mathbf{V}(\bigcup\limits_{A\in\mathcal{K}}\{B_{i}^{A}\colon i\in I_{A}\}).
$$
Further, $B_{i}^{A}\in \mathcal{S}_{\beta}$ since $B_{i}^{A}\in \mathbf{R}(A)$ and $A\in \mathcal{M}_{\beta}$.
That means that $\mathbf{V}(\mathcal{K})$
is set-principal with respect to Corollary~\ref{corollary 4.0.1}.

The opposite implication is proved in Proposition~\ref{prop_SP}.
\end{proof}

\section{General case}

In this section we finish a description of set principal retract varieties of monounary algebras.

For a monounary algebra $A$ consider the following condition:
\begin{enumerate}
\item[{$(^{\bigstar} _{\bigstar})$}] if $C_{1}$, $C_{2}$, $C_{3}$ are connected components of $A$ such that $C_{1}\cong C_{2}\cong C_{3}$, then $\card \{C_{1},C_{2},C_{3}\}\leqq 2$.
\end{enumerate}

We denote 
\begin{center}
$\mathcal{U}^{\bigstar}_{\bigstar}=\{ (A\in \mathcal{U} \colon$ the condition ($^{\bigstar} _{\bigstar}$) is satisfied for all connected components $C_1,C_2,C_3$ of $A\}$.
\end{center}

\begin{lemma}
\label{lemma 5.1}
Let $A\in \mathcal{U}$. Then there is $B\in \mathbf{R}(A)\cap \mathcal{U}^{\bigstar}_{\bigstar}$ such that $\mathbf{V}(A)=\mathbf{V}(B)$.
\end{lemma}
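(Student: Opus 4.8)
The statement is the "global" analogue of Lemma~\ref{lemma 4.4}, passing from the component-wise condition $(\bigstar)$ to the condition $(^\bigstar_\bigstar)$ on the multiset of isomorphism types of connected components. The strategy will mirror the proof of Lemma~\ref{lemma 4.4} exactly, but one level up: instead of collapsing classes $U(x)$ of elements with isomorphic subtrees $P(x)$ down to two representatives, we collapse classes of mutually isomorphic connected components down to two representatives, and then recover $A$ as a retract of a suitable power of the collapsed algebra $B$.

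First I would dispose of the trivial case $A\in\mathcal{U}^\bigstar_\bigstar$. Otherwise, for a connected component $C$ of $A$ let $W(C)$ denote the union of all connected components of $A$ isomorphic to $C$, and let $\{W_i\colon i\in I\}$ enumerate those $W(C)$ for which at least three components are isomorphic to $C$. For each $i\in I$ fix two distinct components $K_1^i,K_2^i$ inside $W_i$, and put
$$
B=\Bigl(A\setminus\bigcup_{i\in I}W_i\Bigr)\cup\bigcup_{i\in I}\bigl(K_1^i\cup K_2^i\bigr),
$$
i.e.\ $B$ is obtained from $A$ by discarding all but two components in each class $W_i$. Then $B\in\mathbf{R}(A)$: a retraction endomorphism sends each discarded component isomorphically onto $K_1^i$ (Theorem~\ref{Thm}, condition~(b) — every discarded component has a "twin" kept in $B$, matching cycles in case~(b1) and degree-profiles in case~(b2)). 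Also $B\in\mathcal{U}^\bigstar_\bigstar$ by construction, and $\mathbf{V}(B)\subseteq\mathbf{V}(A)$ by Lemma~\ref{pom7}.

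For the reverse inclusion I must show $A\in\mathbf{V}(B)$. Let $\kappa=\sup\{\card W_i\colon i\in I\}$ (the sup of the numbers of components in each isomorphism class being collapsed), choose an index set $J$ with $\card J=\kappa$, and form $C=B^{J}=\prod_{j\in J}B$. For each $i\in I$, the set of tuples $c\in C$ with $c(j)$ lying in the component $K_1^i$ or $K_2^i$ for every $j$ splits $C$ into at least $2^\kappa\ge\card W_i$ connected components, each isomorphic to $K_1^i\ (\cong K_2^i\cong$ every component of $W_i)$; choosing distinct such diagonal-type components, together with the diagonal copy of each component of $A\setminus\bigcup_i W_i$, exhibits a subalgebra $T\subseteq C$ with $T\cong A$. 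The point is that $T$ is actually a \emph{retract} of $C$: each component of $C$ not in $T$ either has a cycle of length $n$ for which a component of $T$ has a cycle dividing $n$ (handled by Theorem~\ref{Thm}(b1)), or is acyclic with a degree-profile dominated by that of some component of $T$ (Theorem~\ref{Thm}(b2)); these domination facts hold because $C$ is a power of $B$, so every component of $C$ looks (coordinatewise) like a product of components of $B$, each of which embeds into, and has smaller or equal degree function than, a component of $T$. Hence $A\cong T\in\mathbf{R}(C)\subseteq\mathbf{RP}(B)=\mathbf{V}(B)$, and so $\mathbf{V}(A)\subseteq\mathbf{V}(B)$.

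The main obstacle is the verification that $T$ is a retract of $C$, i.e.\ checking conditions~(b1) and~(b2) of Theorem~\ref{Thm} for \emph{every} connected component of the power $C=B^J$ against the chosen component structure of $T$. This is where the argument is more delicate than in Lemma~\ref{lemma 4.4}: there the retraction was built one element-subtree at a time using explicit isomorphisms $\psi_1^x,\psi_2^x$, whereas here components of $B^J$ can be genuinely new connected shapes (products of components of $B$), and one must argue that each such shape maps homomorphically into the copy of the "richest" component available in $T$ — using that in the diagonal components of $T$ we have packed $\kappa$-fold multiplicity of coordinates, so degrees and in-degrees there dominate those anywhere in $C$, exactly as in the proofs of Lemmas~\ref{lemma 4.1} and~\ref{lemma 4.2}. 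Once that domination is in place, condition~(a) of Theorem~\ref{Thm} is automatic (the retraction is an identity on $T$), and the proof closes.
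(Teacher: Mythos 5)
Your overall strategy is the same as the paper's: keep two components from each isomorphism class to obtain $B\in\mathbf{R}(A)\cap\mathcal{U}^{\bigstar}_{\bigstar}$, and then recover $A$ inside a power $B^{J}$ with $\card J=\kappa$ by exploiting the $2^{\kappa}$ ways of mixing the two retained representatives. The first half (construction of $B$, the retraction of $A$ onto $B$, and $\mathbf{V}(B)\subseteq\mathbf{V}(A)$ via Lemma~\ref{pom7}) matches the paper. The second half, however, has a genuine gap: you claim that the connected components of $C=B^{J}$ all of whose coordinates lie in $K_1^i\cup K_2^i$ are ``each isomorphic to $K_1^i$''. This is false in general, because a component of a power of a connected monounary algebra is usually much larger than the factor. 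For instance, if $K_1^i$ is the two-element algebra $\{c,a\}$ with $f(c)=c$, $f(a)=c$, then $K_1^i\times K_2^i$ is already connected with four elements and three non-fixed preimages of its fixed point, hence not isomorphic to $K_1^i$, and in $B^{J}$ the discrepancy only grows. Consequently, if your $T$ is a union of whole components of $C$ (which is the only reading under which your remark that condition (a) of Theorem~\ref{Thm} is ``automatic'' is correct, since then $f^{-1}(T)\subseteq T$), then $T\cong A$ fails. The same issue already affects the ``diagonal copy'' of a kept component: the diagonal is a subalgebra of a generally larger component of $C$, not a component itself.

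The paper avoids this by using the $2^{\kappa}$ pattern-components only as addresses: it fixes an injection $\xi_i\colon U_i\to Q_i$ into the set of components of $D=B^{J}$ whose coordinates lie in $C_i\cup C_i'$, and maps each discarded component $C$ isomorphically onto a \emph{twisted diagonal} inside the component $\xi_i(C)$, using in coordinate $j$ either $\psi_C$ or $\psi_C'$ according to the pattern, while kept components go to the ordinary diagonal. Then $T=\nu(A)$ is isomorphic to $A$, but it is a proper subalgebra of a union of components of $D$, so condition (a) of Theorem~\ref{Thm} is no longer automatic and must be verified along with (b1) and (b2) — this is exactly the ``can be verified'' step, parallel to Lemma~\ref{lemma 4.4}. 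To repair your argument, replace ``choose whole components isomorphic to $K_1^i$'' by ``embed each discarded component as a twisted diagonal indexed by a distinct pattern of the two representatives'', and then actually check condition (a) (as well as (b)) for the resulting subalgebra $T$.
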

\begin{proof}
If $A\in \mathcal{U}^{\bigstar}_{\bigstar}$ then $B= A$.
Assume that $A$ does not fulfil $(^{\bigstar} _{\bigstar})$. 
For each connected component $C$ of $A$ let $U(C)$ be the set of all connected components of $A$ isomorphic to $C$. 
Denote
$$
\{U_{i}\colon i\in I\}=\{U(C)\colon C \ \ \textrm{is a connected component of}\ \ A, \ \ \card U(C)>2\}.
$$
Since $(^{\bigstar} _{\bigstar})$ is not valid in $A$, the set $I$ is nonempty. 
For each $i\in I$ take two distinct fixed elements of $U_{i}$ and denote them by 
$C_i$ and $C'_i$. 
Now put
$$
B=(A\setminus \bigcup\limits_{i\in I} \bigcup\limits_{K\in U_{i}} K)
\cup \bigcup\limits_{i\in I}(C_i\cup C'_i).
$$
Then $B\in \mathcal{U}^{\bigstar}_{\bigstar}$. 
If $i\in I$ and $C\in U_{i}$ then there exist isomorphisms
$$
\psi_{C} \colon C\to C_i,
$$
$$
\psi '_{C} \colon C\to C'_i.
$$
Define a mapping $\varphi\colon A\to B$ as follows:

if $x\in B$, then $\varphi(x)=x$, or

if $x\in C\in U_{i}\setminus \{C_i,C'_i\}$ for some $i\in I$, then $\varphi(x)=\psi_{C}(x)$.
\newline
This $\varphi$ is a retraction endomorphism on $A$, therefore 
$B\in \mathbf{R}(A)$ and $\mathbf{V}(B)\subset \mathbf{V}(A)$ according to Lemma~\ref{pom7}.

We need to see $A\in \mathbf{V}(B)$.
Let
$$
\kappa = \sup\{\card U_{i}\colon i\in I\}
$$
and let $J$ be a set with $\card J=\kappa$. Put $B_{j}=B$ for each $j\in J$ and let $D=\prod\limits_{j\in J}B_{j}$.
We are going to show that $A\in \mathbf{R}(D)$. If $a\in B$, then denote $\nu(a)=\overline{a}\in D$, where $\overline{a}(j)=a$ for each $j\in J$.
If $i\in I$, then the set
$$
Q_{i}=\{Y \colon Y \ \ \textrm{is a connected component of}\ \ D,\ \ Y(j)\in\{C_i, C'_i\}\ \ \textrm{for each} \ \ j\in J\}
$$
has at least $2^{\kappa}$ elements, thus there is an injection $\xi_{i}\colon U_{i} \to Q_{i}$. 
If $a\in C\in U_{i}-\{C_i, C'_i\}$, then define $\nu(a)=z$, where
$$
z(j)=
\begin{cases}
\psi_{C}(a) &  \text{if}\ \ j\in J \ \text{and}\ (\xi_{i}(C))(j)=C_i,\\
\psi '_{C}(a) &  \text{if}\ \ j\in J \ \text{and}\ (\xi_{i}(C))(j)=C'_i.
\end{cases}
$$
Denote $T=\nu(A)$. It can be verified that
\begin{enumerate}
\item $\nu \colon A\to T$ is an isomorphism,
\item $T$ is a retract of $D$.
\end{enumerate}
Thus
$$
A\in \mathbf{R}(D)\subseteq \mathbf{RP}(B)=\mathbf{V}(B).
$$
\end{proof}

\begin{lemma}
\label{lemma 5.2}
Let $A\in \mathcal{U}$.
Then there exist a set $K$ and algebras 
$B_{\xi}\in \mathcal{U}$ for each $ \xi \in K$
such that
\begin{enumerate}
\item
if $\xi \in K$, then every connected component of $B_{\xi}$ belongs to $\mathcal{S}$,
\item
$\mathbf{V}(A)=\mathbf{V}(\{B_{\xi}\colon \xi\in K\})$.
\end{enumerate}
\end{lemma}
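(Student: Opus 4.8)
The plan is to keep the connected components of $A$, replace each of them by the members of $\mathcal S$ supplied by Lemma~\ref{lemma 4.7}, and then reassemble these pieces in all possible ways. Write $A=\bigcup_{c\in C}A_{c}$ as the disjoint union of its connected components, where $C$ is a set and each $A_{c}\in\mathcal U_{c}$. By Lemma~\ref{lemma 4.7}, for every $c$ there is a set $\mathcal B_{c}\subseteq\mathbf R(A_{c})\cap\mathcal S$ with $\mathbf V(A_{c})=\mathbf V(\mathcal B_{c})$; in particular each member of $\mathcal B_{c}$ is a connected retract of $A_{c}$. Let $K=\prod_{c\in C}\mathcal B_{c}$ be the set of all choice functions and, for $\sigma\in K$, put $B_{\sigma}=\bigcup_{c\in C}\sigma(c)$, a disjoint union. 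Then every connected component of $B_{\sigma}$ lies in $\mathcal S$, so (1) holds and it remains to prove that $\mathbf V(A)=\mathbf V(\{B_{\sigma}:\sigma\in K\})$. The inclusion $\mathbf V(\{B_{\sigma}:\sigma\in K\})\subseteq\mathbf V(A)$ is easy: if $r_{c}$ is, for each $c$, a retraction of $A_{c}$ onto a subalgebra isomorphic to $\sigma(c)$, then $\bigcup_{c}r_{c}$ is a retraction of $A$ onto a subalgebra isomorphic to $B_{\sigma}$, so $B_{\sigma}\in\mathbf R(A)\subseteq\mathbf V(A)$, and Lemmas~\ref{pom2} and~\ref{pom7} give the inclusion.

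For the reverse inclusion it suffices, by Lemma~\ref{pom2}, to show that $A\in\mathbf{RP}(\{B_{\sigma}:\sigma\in K\})$. Since $A_{c}\in\mathbf{RP}(\mathcal B_{c})$, fix for each $c$ a set $L_{c}$, a map $\beta_{c}\colon L_{c}\to\mathcal B_{c}$ and a realization of $A_{c}$ as a retract of $P_{c}:=\prod_{\ell\in L_{c}}\beta_{c}(\ell)$; take the sets $L_{c}$ pairwise disjoint, put $L=\bigcup_{c}L_{c}$, and fix some $e_{c}\in\mathcal B_{c}$. For $\ell\in L_{c(\ell)}$ define $\sigma_{\ell}\in K$ by $\sigma_{\ell}(c(\ell))=\beta_{c(\ell)}(\ell)$ and $\sigma_{\ell}(c)=e_{c}$ for $c\neq c(\ell)$, and set $D=\prod_{\ell\in L}B_{\sigma_{\ell}}$, a product of members of $\{B_{\sigma}:\sigma\in K\}$. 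Since a direct product of disjoint unions is the disjoint union, taken over all ways of choosing a component in each factor, of the corresponding products, the connected components of $D$ are indexed by pairs $(\gamma,N)$ with $\gamma\colon L\to C$ and $N$ a component of $\prod_{\ell}\sigma_{\ell}(\gamma(\ell))$. For the constant map $\gamma\equiv c_{0}$ this last product equals $P_{c_{0}}\times\prod_{\ell\in L\setminus L_{c_{0}}}e_{c_{0}}$; using the observation that, if an algebra is a retract of $P_{c_{0}}$ and admits a homomorphism into an algebra $X$, then it is a retract of $P_{c_{0}}\times X$, together with the fact that $A_{c_{0}}$ maps homomorphically (diagonally, via the retraction of $A_{c_{0}}$ onto $e_{c_{0}}$) into $\prod_{\ell\in L\setminus L_{c_{0}}}e_{c_{0}}$, we obtain a copy $A_{c_{0}}^{\circ}\cong A_{c_{0}}$ which is a retract of $\prod_{\ell}\sigma_{\ell}(c_{0})$. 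Being connected, $A_{c_{0}}^{\circ}$ lies in --- and is a retract of --- a single component $K_{c_{0}}$ of $D$, and the components $K_{c_{0}}$, corresponding to pairs with pairwise different constant maps $\gamma$, are pairwise distinct. Put $T=\bigcup_{c_{0}\in C}A_{c_{0}}^{\circ}$; this is a subalgebra of $D$ isomorphic to $A$.

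It remains to verify, by means of Theorem~\ref{Thm}, that $T$ is a retract of $D$. Condition~(a) is relevant only for elements $y$ with $f(y)\in A_{c_{0}}^{\circ}$ for some $c_{0}$; such a $y$ lies in $K_{c_{0}}$, and condition~(a) of Theorem~\ref{Thm}, applied to the retract $A_{c_{0}}^{\circ}$ of $K_{c_{0}}$, supplies the required $z$ (degrees computed inside $K_{c_{0}}$ coincide with degrees computed inside $D$). For condition~(b), let $K$ be a component of $D$ with $K\cap T=\emptyset$, arising from a map $\gamma\colon L\to C$ with range $S$; each factor $\sigma_{\ell}(\gamma(\ell))$ is a connected retract of $A_{\gamma(\ell)}$. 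If $K$ has a cycle with $d$ elements, then every $A_{c}$ with $c\in S$ has a cycle; since a connected retract of a connected algebra with an $n$-element cycle again has an $n$-element cycle, each factor over a coordinate mapping to $c$ has a cycle with $n_{c}$ elements, whence $d=\operatorname{lcm}_{c\in S}n_{c}$, and for any $c_{1}\in S$ the component $K_{c_{1}}$ meets $T$ and has a cycle with $n_{c_{1}}$ elements, $n_{c_{1}}\mid d$; this is condition~(b1). If $K$ has no cycle, choose $c_{1}\in S$ with $A_{c_{1}}$ acyclic and a coordinate $\ell_{0}$ with $\gamma(\ell_{0})=c_{1}$; for $x_{0}\in K$ the coordinate $x_{0,\ell_{0}}$ lies in a connected subalgebra of $A_{c_{1}}$ and so corresponds to some $y_{0}\in A_{c_{1}}^{\circ}\subseteq T$, and
\[
\s_{f}\bigl(f^{k}(x_{0})\bigr)\le\s_{f}\bigl(f^{k}(x_{0,\ell_{0}})\bigr)\le\s_{f}\bigl(f^{k}(y_{0})\bigr)\qquad\text{for all }k\in\mathbb N\cup\{0\},
\]
the first inequality because the degree of an element of a direct product does not exceed the degree of any of its coordinates (cf.\ the proof of Proposition~\ref{prop_SP}), the second because passing to a subalgebra cannot raise the degree of an element and $K_{c_{1}}$ is a full component of $D$; this is condition~(b2). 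Hence $T$ is a retract of $D$, and therefore $A\cong T\in\mathbf R(D)\subseteq\mathbf{RP}(\{B_{\sigma}:\sigma\in K\})$.

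I expect the main obstacle to be exactly this last step: organizing the (in general very many) components of the auxiliary product $D$ and checking conditions~(b1) and~(b2) of Theorem~\ref{Thm} for the ``mixed'' components $K$, which simultaneously involve retracts of several different components $A_{c}$ of $A$. The cyclic case comes down to the elementary divisibility remark above, while the acyclic case relies on comparing the degree function of a direct product with those of its factors.
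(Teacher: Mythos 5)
Your construction of the generating family is exactly the paper's: decompose $A$ into connected components, apply Lemma~\ref{lemma 4.7} to each, and let the $B_{\sigma}$ be all componentwise recombinations indexed by choice functions; the easy inclusion is handled the same way in both proofs. Where you diverge is the hard direction. The paper embeds $A$ into the product $\prod_{\xi\in K}B_{\xi}$ over \emph{all} choice functions via the single formula $b(\xi)=(\nu_{j}(a))(\xi(j))$ and leaves the retract verification essentially to the reader; you instead build a smaller product $\prod_{\ell\in L}B_{\sigma_{\ell}}$ indexed by the coordinates of the original realizations, using a base point $e_{c}\in\mathcal B_{c}$ and the graph-of-a-homomorphism trick to place a copy of $A_{c_{0}}$ inside the component with constant colouring $\gamma\equiv c_{0}$. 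Both routes work, and yours has the merit of actually checking conditions (a), (b1), (b2) of Theorem~\ref{Thm}, which the paper's proof glosses over.

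There is one genuine (though easily repaired) gap, in your verification of (b2). You assume that an acyclic component $K$ of $D$ must involve some acyclic $A_{c_{1}}$ with $c_{1}\in S$. That is false: if every factor $\sigma_{\ell}(\gamma(\ell))$ contains a cycle but the distances of the coordinates $x_{0,\ell}$ from those cycles are unbounded over the infinitely many $\ell$, the component of $x_{0}$ contains no cyclic element (take, e.g., infinitely many copies of $\widehat{1}$ and the element whose $n$-th coordinate sits at distance $n$ from the loop). In that situation the $c_{1}$ you want to choose does not exist. The fix is one line: if every $A_{c}$ with $c\in S$ has a cycle, pick any $c_{1}\in S$ and let $y_{0}$ be a cyclic element of $A_{c_{1}}^{\circ}\subseteq T$; then $\s_{f}(f^{k}(y_{0}))=\infty$ for every $k$, so (b2) holds trivially. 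With that case added your argument is complete. (A cosmetic point: you use $K$ both for the index set of the family $\{B_{\sigma}\}$ and for a component of $D$ in the final verification; rename one of them.)
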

\begin{proof}
Let $\{A_j, j\in J\}$ be a partition of $A$ into connected components.

By using Lemma~\ref{lemma 4.7} we obtain that for each $j\in J$ there exist a set $I_{j}$ and  $D_{ji}\in \mathbf{R}(A_{j})\cap \mathcal{S},i\in I_{j}$ such that 
 $A_{j}\in \mathbf{R}(\prod\limits_{i\in I_{j}}D_{ji})$.
Put $K$ the set of all mappings $\xi$ of $J$ into $\bigcup\limits_{j\in J}I_{j}$ such that
$\xi(j)\in I_{j}$ for each $j\in J$.
If $\xi \in K$, then put $B_{\xi}$ this algebra that 
 $\{D_{j\xi(j)},j\in J\}$ is a partition of $B_{\xi}$ into connected components.

If $\xi \in K$, then $D_{j\xi(j)}\in \mathbf{R}(A_{j})$ for every $j\in J$.
Thus $B_{\xi}\in \mathbf{R}(A)$ and 
$$\mathbf{V}(\{B_{\xi}\colon \xi\in K\})\subseteq \mathbf{V}(A)$$ according to Lemma~\ref{pom7}.

To see the opposite inclusion suppose that $\nu_{j}$ is an isomorphism of $A_{j}$ onto some retract $T_{j}$ of $\prod\limits_{i\in I_{j}}D_{ji}$.
Let us define a mapping $\nu$ of $A$ into $\prod\limits_{\xi \in K}B_{\xi}$ as follows. If $a\in A$, then there is a uniquely determined $j\in J$ with $a\in A_{j}$. Then $\nu_{j}(a)\in T_{j}\subseteq \prod\limits_{i\in I_{j}}D_{ji}$ and we put $\nu(a)=b$, where 
$b(\xi)=(\nu_{j}(a))(\xi(j))$ for each $\xi \in K$.

The mapping $\nu$ is injective, since if $a\in A_{j}$, $a'\in A_{m}$, $j,m\in J$, $\nu(a)=\nu(a' )$, then, for each $\xi\in K$,
$$
(\nu_{j}(a))(\xi(j))=(\nu_{m}(a' ))(\xi(m)),
$$ 
thus $j=m$, $\nu_{j}(a)= \nu_{j}(a' )$, hence $a=a' $. 
It can be shown that $\nu $ is a homomorphism, thus
$\nu(A)\cong A$.
Further, $\nu(A)$ is a retract of $\prod\limits_{\xi\in K}B_{\xi}$ since $A_{j}\in \mathbf{R}(\prod\limits_{i\in I_{j}}D_{ji})$.
Therefore we have 
 $A\in \mathbf{RP}(B_{\xi}\colon \xi\in K)= \mathbf{V}(B_{\xi}\colon \xi\in K)$.
\end{proof}

\begin{corollary}
\label{corollary 5.0.1}
Let $A$ be a monounary algebra. 
Then there are a set $I$ and monounary algebras 
$B_{i}\in \mathcal{U}^{\bigstar}_{\bigstar}$ for each $i\in I$ such that
\begin{enumerate}
\item[\rm(i)] $ \mathbf{V}(A)=\mathbf{V}(B_{i}\colon i\in I)$,
\item[\rm(ii)] if $i\in I$ and $C$ is a connected component of $B_{i}$, then 
$C\in \mathcal{S}$.
\end{enumerate}
\end{corollary}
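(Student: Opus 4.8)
The plan is to obtain Corollary~\ref{corollary 5.0.1} by chaining Lemma~\ref{lemma 5.2} and Lemma~\ref{lemma 5.1}. First I would apply Lemma~\ref{lemma 5.2} to $A$, obtaining a set $K$ and monounary algebras $B_{\xi}$ ($\xi\in K$) such that every connected component of each $B_{\xi}$ lies in $\mathcal{S}$ and $\mathbf{V}(A)=\mathbf{V}(\{B_{\xi}\colon\xi\in K\})$. This already secures a version of condition~(ii), but the $B_{\xi}$ need not satisfy $(^{\bigstar}_{\bigstar})$, so condition~(i) is not yet in the required form.

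Next I would apply Lemma~\ref{lemma 5.1} separately to each $B_{\xi}$, producing $C_{\xi}\in\mathbf{R}(B_{\xi})\cap\mathcal{U}^{\bigstar}_{\bigstar}$ with $\mathbf{V}(B_{\xi})=\mathbf{V}(C_{\xi})$. I would then set $I=K$ and $B_i:=C_i$. Condition~(i) follows from a two-sided inclusion via Lemma~\ref{pom7}: since $C_{\xi}\in\mathbf{R}(B_{\xi})\subseteq\mathbf{V}(B_{\xi})\subseteq\mathbf{V}(\{B_{\xi}\colon\xi\in K\})$ for each $\xi$, we get $\mathbf{V}(\{C_{\xi}\colon\xi\in K\})\subseteq\mathbf{V}(\{B_{\xi}\colon\xi\in K\})=\mathbf{V}(A)$; conversely $B_{\xi}\in\mathbf{V}(C_{\xi})\subseteq\mathbf{V}(\{C_{\xi}\colon\xi\in K\})$ gives the reverse inclusion, so $\mathbf{V}(A)=\mathbf{V}(\{C_{\xi}\colon\xi\in K\})$.

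It remains to verify condition~(ii) for the $C_{\xi}$, and this is the only point requiring a small argument. Inspecting the construction in the proof of Lemma~\ref{lemma 5.1}, the algebra there is obtained from its input by discarding entire connected components (keeping, for each isomorphism type occurring more than twice, exactly two copies), so every connected component of $C_{\xi}$ is isomorphic to a connected component of $B_{\xi}$, hence lies in $\mathcal{S}$ because $\mathcal{S}=\mathcal{S}^{(0)}\cup\mathcal{S}^{(1)}\cup\mathcal{S}^{(2)}$ is closed under isomorphism. This establishes~(ii) and finishes the proof.

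The main—indeed essentially the only—obstacle is this last observation: one must be sure that the particular retract supplied by Lemma~\ref{lemma 5.1} does not introduce connected components outside $\mathcal{S}$. Since the retraction there acts componentwise and only deletes components, no new component types arise, so the difficulty is bookkeeping rather than a genuine complication; everything else is a direct assembly of the two preceding lemmas and Lemma~\ref{pom7}.
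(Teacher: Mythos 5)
Your proposal is correct and follows exactly the route the paper intends: the paper's proof is the one-line remark that the corollary is a consequence of Lemmas~\ref{lemma 5.2},~\ref{lemma 5.1} and~\ref{pom7}, applied in just the order you use. Your additional check that the construction in Lemma~\ref{lemma 5.1} only deletes whole components (so condition~(ii) survives) is a worthwhile detail the paper leaves implicit.
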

\begin{proof}
The assertion is a consequence of Lemmas~\ref{lemma 5.2},~\ref{lemma 5.1} and~\ref{pom7}.
\end{proof}

\begin{corollary}
\label{cor 5.0.2}
Let $\mathcal{K}\subseteq \mathcal{U}$. 
Then there exists $\mathcal{L}\subseteq \mathcal{U}^{\bigstar}_{\bigstar}$ such that
\begin{enumerate}
\item[\rm(i)]
$\mathbf{V}(\mathcal{L})=\mathbf{V}(\mathcal{K})$,
\item[\rm(ii)] if $A\in \mathcal{L}$ and $C$ is a connected component of $A$, then 
$C\in \mathcal{S}$.
\end{enumerate} 
\end{corollary}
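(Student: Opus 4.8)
The plan is to obtain $\mathcal{L}$ by applying Corollary~\ref{corollary 5.0.1} separately to each algebra of $\mathcal{K}$ and then taking the union of the resulting families. Concretely, for every $A\in\mathcal{K}$ Corollary~\ref{corollary 5.0.1} supplies a set $I_{A}$ and monounary algebras $B^{A}_{i}\in\mathcal{U}^{\bigstar}_{\bigstar}$ ($i\in I_{A}$) such that $\mathbf{V}(A)=\mathbf{V}(B^{A}_{i}\colon i\in I_{A})$ and every connected component of each $B^{A}_{i}$ belongs to $\mathcal{S}$. I would then put
$$
\mathcal{L}=\bigcup_{A\in\mathcal{K}}\{B^{A}_{i}\colon i\in I_{A}\}.
$$
By construction $\mathcal{L}\subseteq\mathcal{U}^{\bigstar}_{\bigstar}$, and condition (ii) holds automatically, since each member of $\mathcal{L}$ is one of the algebras $B^{A}_{i}$, all of whose connected components lie in $\mathcal{S}$.

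It then remains to verify (i), i.e.\ $\mathbf{V}(\mathcal{L})=\mathbf{V}(\mathcal{K})$, and for this I would invoke Lemma~\ref{pom7} twice. For the inclusion $\mathbf{V}(\mathcal{K})\subseteq\mathbf{V}(\mathcal{L})$: if $A\in\mathcal{K}$, then, since $\{B^{A}_{i}\colon i\in I_{A}\}\subseteq\mathcal{L}$ and every $B^{A}_{i}\in\mathbf{V}(\mathcal{L})$, Lemma~\ref{pom7} gives $\mathbf{V}(A)=\mathbf{V}(B^{A}_{i}\colon i\in I_{A})\subseteq\mathbf{V}(\mathcal{L})$, whence $A\in\mathbf{V}(\mathcal{L})$; applying Lemma~\ref{pom7} once more (with $\mathcal{K}$ and $\mathcal{L}$ in the roles of its two classes) yields $\mathbf{V}(\mathcal{K})\subseteq\mathbf{V}(\mathcal{L})$. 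For the reverse inclusion: each $B\in\mathcal{L}$ equals some $B^{A}_{i}$ with $A\in\mathcal{K}$, and $B^{A}_{i}\in\mathbf{V}(B^{A}_{j}\colon j\in I_{A})=\mathbf{V}(A)\subseteq\mathbf{V}(\mathcal{K})$ because $A\in\mathcal{K}$ (using Lemma~\ref{pom2}); so $B\in\mathbf{V}(\mathcal{K})$ for every $B\in\mathcal{L}$, and Lemma~\ref{pom7} gives $\mathbf{V}(\mathcal{L})\subseteq\mathbf{V}(\mathcal{K})$. Combining the two inclusions completes the proof.

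I do not expect any genuine obstacle: the statement is a routine reorganisation of Corollary~\ref{corollary 5.0.1} over an arbitrary class of generators, and the only point needing a little care is to keep the two inclusions straight and to note that membership of each individual $B^{A}_{i}$ in the retract variety generated by its siblings suffices — which is exactly the content of Lemma~\ref{pom7}. In particular no new product or retract construction is required here, everything being inherited from the single-algebra case treated in Section~5.
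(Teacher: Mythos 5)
Your proposal is correct and matches the paper's intent: the paper states Corollary~\ref{cor 5.0.2} without a separate proof, treating it exactly as you do — apply Corollary~\ref{corollary 5.0.1} to each $A\in\mathcal{K}$, take the union of the resulting families $\{B^{A}_{i}\colon i\in I_{A}\}$, and combine the two inclusions via Lemma~\ref{pom7} (with Lemma~\ref{pom2} for $\mathbf{V}(A)\subseteq\mathbf{V}(\mathcal{K})$). No further construction is needed, so your argument is essentially the paper's own.
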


\begin{notation}
{\rm{
For $\beta\in\ord$ let 

$\mathcal{R}_{\beta}=\{ B\in \mathcal{M}_{\beta}\cap \mathcal{U}^{\bigstar}_{\bigstar}
\colon$ 
if $C$ is a connected component of $B$, then 
$C\in \mathcal{S}\}.$
}}
\end{notation}

\begin{lemma}
\label{lemma 5.3}
Let $\beta\in \ord$. 
Then $\mathcal{R}_{\beta}/\cong$ is a set such that 
\begin{align*}
\card(\mathcal{R}_{\beta}/\cong)\leqq 2^{(\tau(\beta))^{\aleph_{0}}}
\end{align*}
\end{lemma}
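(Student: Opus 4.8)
The plan is to read off the bound directly from Corollary~\ref{corollary 4.0.1}, which already gives that $\mathcal{S}_{\beta}/\cong$ is a set with $\card(\mathcal{S}_{\beta}/\cong)\leqq(\tau(\beta))^{\aleph_{0}}$, combined with the observation that condition $(^{\bigstar}_{\bigstar})$ caps the multiplicity with which any isomorphism type of connected component may occur at $2$.

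First I would check that every connected component $C$ of an algebra $B\in\mathcal{R}_{\beta}$ lies in $\mathcal{S}_{\beta}$. Indeed, $C\in\mathcal{S}$ by the definition of $\mathcal{R}_{\beta}$, and $C$ is connected; since $A^{(\infty)}$ and the degree $\s_{f}$ are computed componentwise, one has $C^{(\infty)}=C\cap B^{(\infty)}$ and the degree of $x\in C$ in $C$ equals its degree in $B$. Hence $B\in\mathcal{M}_{\beta}$ forces $\s_{f}(x)\leqq\beta$ for each $x\in C\setminus C^{(\infty)}$, i.e. $C\in\mathcal{M}_{\beta}$, so $C\in\mathcal{M}_{\beta}\cap\mathcal{S}=\mathcal{S}_{\beta}$.

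Next I would encode $B$ by a multiplicity function. For $B\in\mathcal{R}_{\beta}$ define $m_{B}\colon(\mathcal{S}_{\beta}/\cong)\to\{0,1,2\}$ by letting $m_{B}([C])$ be the number of connected components of $B$ isomorphic to $C$; this is well defined by the previous step, and it takes values in $\{0,1,2\}$ by $(^{\bigstar}_{\bigstar})$. Since a monounary algebra is determined up to isomorphism by the multiset of isomorphism types of its connected components, and here all these multiplicities are finite, the assignment $[B]\mapsto m_{B}$ is a well-defined injection of $\mathcal{R}_{\beta}/\cong$ into $\{0,1,2\}^{\mathcal{S}_{\beta}/\cong}$. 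As $\mathcal{S}_{\beta}/\cong$ is a set, so is $\{0,1,2\}^{\mathcal{S}_{\beta}/\cong}$, hence $\mathcal{R}_{\beta}/\cong$ is a set and $\card(\mathcal{R}_{\beta}/\cong)\leqq 3^{\card(\mathcal{S}_{\beta}/\cong)}$.

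Finally I would finish with elementary cardinal arithmetic: by Corollary~\ref{corollary 4.0.1}, $3^{\card(\mathcal{S}_{\beta}/\cong)}\leqq 3^{(\tau(\beta))^{\aleph_{0}}}$, and since $\tau(\beta)\geqq\aleph_{0}$ for every $\beta\in\ord$ (immediate from Notation~\ref{tau-beta-notation}), the cardinal $(\tau(\beta))^{\aleph_{0}}$ is infinite, so $3^{(\tau(\beta))^{\aleph_{0}}}=2^{(\tau(\beta))^{\aleph_{0}}}$, yielding the stated bound. There is no substantial obstacle; the only points requiring care are the (routine) fact that isomorphism of monounary algebras is detected componentwise and the identity $3^{\kappa}=2^{\kappa}$ for infinite $\kappa$.
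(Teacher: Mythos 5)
Your proof is correct and follows essentially the same route as the paper: both arguments rest on the two facts that $(^{\bigstar}_{\bigstar})$ caps each component's multiplicity at $2$ and that Corollary~\ref{corollary 4.0.1} bounds $\card(\mathcal{S}_{\beta}/\cong)$ by $(\tau(\beta))^{\aleph_{0}}$, finishing with routine cardinal arithmetic. Your encoding of an isomorphism class by its multiplicity function into $\{0,1,2\}^{\mathcal{S}_{\beta}/\cong}$ is a slightly cleaner bookkeeping of the paper's count (which instead bounds the number of components by $2\cdot(\tau(\beta))^{\aleph_{0}}$ and counts assignments of types to components), but it is the same idea.
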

\begin{proof}
If $A\in \mathcal{R}_{\beta}$, then every connected component of $A$ is from $\mathcal{S}_{\beta}$.
In view of $A\in \mathcal{U}^{\bigstar}_{\bigstar}$ and Corollary~\ref{corollary 4.0.1} the algebra $A$ consists of at most
\begin{align*}
2\cdot \card(\mathcal{S}_{\beta}^{(1)}\cup \mathcal{S}_{\beta}^{(2)}) = (\tau(\beta))^{\aleph_{0}}
\end{align*}
connected components. 
For each connected component there are at most $(\tau(\beta))^{\aleph_{0}}$ possibilities, therefore
\begin{align*}
\card(\mathcal{R}_{\beta}/\cong)\leqq [(\tau(\beta))^{\aleph_{0}}]^{(\tau(\beta))^{\aleph_{0}}} = 2^{(\tau(\beta))^{\aleph_{0}}}.
\end{align*}
\end{proof}

\begin{proposition}
\label{proposition 5.1}
Let $\beta\in\ord$ and $\mathcal{V}\subseteq \mathcal{M}_{\beta}$ be a retract variety.
 Then $\mathcal{V}$ is set-principal.
\end{proposition}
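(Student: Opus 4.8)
The plan is to reduce the statement to the already-established machinery about $\mathcal{R}_\beta$. First I would take any $A \in \mathcal{V}$. Since $\mathcal{V} \subseteq \mathcal{M}_\beta$, we have $A \in \mathcal{M}_\beta$, and by Corollary~\ref{corollary 5.0.1} there are a set $I$ and algebras $B_i \in \mathcal{U}^{\bigstar}_{\bigstar}$ with $\mathbf{V}(A) = \mathbf{V}(B_i \colon i \in I)$ and every connected component of each $B_i$ lying in $\mathcal{S}$. Because each $B_i$ is a retract of $A$ (this is how the $B_i$ are produced in Lemmas~\ref{lemma 5.2} and~\ref{lemma 5.1}) and retracts do not increase the degree of non-$A^{(\infty)}$-elements, each $B_i$ is itself in $\mathcal{M}_\beta$. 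Hence $B_i \in \mathcal{R}_\beta$ for every $i \in I$, and $A \in \mathbf{V}(\mathcal{R}_\beta)$ by Lemma~\ref{pom7}.

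Having done this for every $A \in \mathcal{V}$, Lemma~\ref{pom7} gives $\mathcal{V} = \mathbf{V}(\mathcal{V}) \subseteq \mathbf{V}(\mathcal{R}_\beta)$. For the reverse inclusion, I would note that each $B \in \mathcal{R}_\beta$ arises (up to isomorphism) as a retract of some $A \in \mathcal{V}$; more carefully, the $B_i$ produced for $A$ satisfy $B_i \in \mathbf{R}(A)$, and since $\mathcal{V}$ is a retract variety closed under $\mathbf{R}$, we get $B_i \in \mathcal{V}$. Thus every member of the generating system constructed above already lies in $\mathcal{V}$, so $\mathbf{V}(\mathcal{R}_\beta \cap \mathcal{V}) \subseteq \mathcal{V}$; combined with the previous paragraph (where the generators were in fact chosen inside $\mathcal{V}$) we obtain $\mathcal{V} = \mathbf{V}(\mathcal{R}_\beta \cap \mathcal{V})$.

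Finally, $\mathcal{R}_\beta / \cong$ is a set by Lemma~\ref{lemma 5.3}, hence so is $(\mathcal{R}_\beta \cap \mathcal{V})/\cong$; choosing one representative from each isomorphism class yields an honest set $\mathcal{G}$ of algebras with $\mathbf{V}(\mathcal{G}) = \mathbf{V}(\mathcal{R}_\beta \cap \mathcal{V}) = \mathcal{V}$. Therefore $\mathcal{V}$ is set-principal.

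The step I expect to be the main obstacle is the careful bookkeeping showing that the generators supplied by Corollary~\ref{corollary 5.0.1} can be taken \emph{inside} $\mathcal{V}$ and simultaneously inside $\mathcal{R}_\beta$: one must track through Lemmas~\ref{lemma 5.1}, \ref{lemma 5.2} and~\ref{lemma 4.7} that each intermediate algebra is a retract of $A$ (so it lies in $\mathcal{V}$, using that $\mathcal{V}$ is closed under $\mathbf{R}$) and that its degrees are bounded by $\beta$ (so it lies in $\mathcal{M}_\beta$, using Proposition~\ref{prop_SP}-style reasoning or directly that a subalgebra of $A$ has $\s_f$-values bounded by those of $A$ off $A^{(\infty)}$). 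Once this is in place, everything else is a direct appeal to the cardinality bound of Lemma~\ref{lemma 5.3}.
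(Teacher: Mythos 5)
Your proposal is correct and follows essentially the same route as the paper: apply Corollary~\ref{corollary 5.0.1} to each $A\in\mathcal{V}$, observe that the resulting generators lie in $\mathcal{R}_{\beta}$, and invoke the cardinality bound of Lemma~\ref{lemma 5.3} to extract a set of generators. Your extra bookkeeping with $\mathcal{R}_{\beta}\cap\mathcal{V}$ for the reverse inclusion just makes explicit what the paper leaves implicit (that the generating class is itself contained in $\mathcal{V}$), so no substantive difference.
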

\begin{proof}
If $A\in\mathcal{V}$, then Corollary~\ref{corollary 5.0.1} implies that there are a set $I_{A}$ and monounary algebras $B_{i}^{A}\in \mathcal{U}^{\bigstar}_{\bigstar}$ for each $i\in I_{A}$ such that
\begin{enumerate}
\item[\rm(i)] $ \mathbf{V}(A)=\mathbf{V}(B_{i}^{A}\colon i\in I_{A})$,
\item[\rm(ii)]  if $i\in I_{A}$ and $C$ is a connected component of $B_{i}^{A}$, 
then $C\in \mathcal{S}$.
\end{enumerate}
Then the assumption $\mathcal{V}\subseteq \mathcal{M}_{\beta}$ implies
$B_{i}^{A}\in \mathcal{R}_{\beta}$ for each $i\in I_{A}$.
We get
\begin{align*}
\mathcal{V} = \mathbf{V}(\{A \colon A\in\mathcal{V}\}) = 
\mathbf{V}(\{B_{i}^{A} \colon A\in\mathcal{V}, i\in I_{A}\}) \subseteq \mathbf{V}(\mathcal{R}_{\beta}).
\end{align*}
and Lemma~\ref{lemma 5.3} implies that the retract variety $\mathcal{V}$ is set-principal.
\end{proof}

\begin{theorem}\label{(th2)}
Let $\mathcal{V}$ be a retract variety of monounary algebras. The following conditions are equivalent:
\begin{enumerate}
\item $\mathcal{V}$ is set-principal,
\item there is $\beta\in \ord$ such that  if $A\in \mathcal{V}$, $x\in A\setminus A^{(\infty)}$, then $\s_{f}(x)\leqq\beta$.
\end{enumerate}
\end{theorem}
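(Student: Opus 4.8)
The plan is to prove the theorem by combining Proposition~\ref{prop_SP} with Proposition~\ref{proposition 5.1}, observing that condition (2) is just an unpacking of the statement $\mathcal{V}\subseteq\mathcal{M}_\beta$. Indeed, by the definition of $\mathcal{M}_\beta$ in Notation~\ref{notation 2.3}, an algebra $(A,f)\in\mathcal{U}$ lies in $\mathcal{M}_\beta$ precisely when $\s_f(x)\leqq\beta$ for every $x\in A\setminus A^{(\infty)}$; hence the assertion ``$\mathcal{V}\subseteq\mathcal{M}_\beta$'' is verbatim the same as condition (2). So the whole content of the theorem is the equivalence of set-principality with the existence of such a $\beta$, and both directions are already in hand.

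First I would prove (1)$\Rightarrow$(2). Assume $\mathcal{V}$ is set-principal. By Proposition~\ref{prop_SP} there is $\beta\in\ord$ with $\mathcal{V}\subseteq\mathcal{M}_\beta$, which by the remark above is exactly condition (2). Next I would prove (2)$\Rightarrow$(1). Assume there is $\beta\in\ord$ such that every $A\in\mathcal{V}$ and every $x\in A\setminus A^{(\infty)}$ satisfy $\s_f(x)\leqq\beta$; equivalently $\mathcal{V}\subseteq\mathcal{M}_\beta$. Then Proposition~\ref{proposition 5.1} applies directly and yields that $\mathcal{V}$ is set-principal. This closes the cycle of implications.

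I do not expect a genuine obstacle here, since the theorem is a clean restatement of the two propositions from Sections~3 and~5; the only thing to be careful about is to state explicitly that condition (2) is the definitional expansion of $\mathcal{V}\subseteq\mathcal{M}_\beta$, so that the reader sees why Proposition~\ref{prop_SP} and Proposition~\ref{proposition 5.1} combine to give exactly the claimed equivalence. If anything deserves a sentence of justification, it is that $\mathcal{V}\subseteq\mathcal{M}_\beta$ unwinds, via Notation~\ref{notation 2.3}, to ``for each $A\in\mathcal{V}$ and each $x\in A\setminus A^{(\infty)}$, $\s_f(x)\leqq\beta$'', and conversely. The heavy lifting — the structural reductions to connected algebras in $\mathcal{S}$, the cardinality bounds on $\mathcal{T}_\beta/{\cong}$, $\mathcal{S}_\beta/{\cong}$ and $\mathcal{R}_\beta/{\cong}$, and the construction of retracts inside large products — has already been carried out to establish Proposition~\ref{proposition 5.1}, so the proof of the theorem itself is essentially a two-line assembly.

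\begin{proof}
By Notation~\ref{notation 2.3}, for $\beta\in\ord$ we have $\mathcal{V}\subseteq\mathcal{M}_\beta$ if and only if $\s_f(x)\leqq\beta$ for every $A\in\mathcal{V}$ and every $x\in A\setminus A^{(\infty)}$. Thus condition (2) is equivalent to the existence of $\beta\in\ord$ with $\mathcal{V}\subseteq\mathcal{M}_\beta$.

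$(1)\Rightarrow(2)$. If $\mathcal{V}$ is set-principal, then Proposition~\ref{prop_SP} gives $\beta\in\ord$ with $\mathcal{V}\subseteq\mathcal{M}_\beta$, which is condition (2).

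$(2)\Rightarrow(1)$. If condition (2) holds for some $\beta\in\ord$, then $\mathcal{V}\subseteq\mathcal{M}_\beta$, and Proposition~\ref{proposition 5.1} yields that $\mathcal{V}$ is set-principal.
\end{proof}
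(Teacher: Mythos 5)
Your proof is correct and takes exactly the paper's route: the paper derives Theorem~\ref{(th2)} directly from Propositions~\ref{prop_SP} and~\ref{proposition 5.1}, with condition (2) being the definitional unpacking of $\mathcal{V}\subseteq\mathcal{M}_{\beta}$ via Notation~\ref{notation 2.3}. Your write-up merely makes this identification explicit, which is fine.
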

\begin{proof}
It follows from Propositions~\ref{proposition 5.1} and~\ref{prop_SP}. 
\end{proof}

\begin{proposition}
\label{prop 5.3}
There exists a retract variety $\mathcal{T}$ of monounary algebras such that $\mathcal{T}$ is not set-principal.
\end{proposition}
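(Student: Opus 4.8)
The plan is to take the whole class $\mathcal{U}$ of all monounary algebras as the required retract variety $\mathcal{T}$, and to read off its failure to be set-principal directly from Theorem~\ref{(th2)}. First I would note that $\mathcal{U}$ is trivially a retract variety: a retract of a monounary algebra and a direct product of monounary algebras are again monounary algebras, so $\mathcal{U}$ is closed under $\mathbf{R}$ and $\mathbf{P}$ (and under isomorphisms); equivalently $\mathbf{V}(\mathcal{U})=\mathbf{RP}(\mathcal{U})=\mathcal{U}$.

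By Theorem~\ref{(th2)} (whose substantive content in this direction is Proposition~\ref{prop_SP}) it then suffices to show that there is no ordinal $\beta$ such that $\s_{f}(x)\leqq\beta$ for all $A\in\mathcal{U}$ and all $x\in A\setminus A^{(\infty)}$. So let $\beta\in\ord$ be arbitrary. By the remark following the definition of $\s_{f}$, for the ordinal $\lambda=\beta+1$ there is $(A,f)\in\mathcal{U}$ together with an element $x\in A$ with $\s_{f}(x)=\lambda$. Since $\s_{f}(x)=\lambda\in\ord$, we have $x\in A^{(\lambda)}$, and the sets $A^{(\lambda)}$ are disjoint from $A^{(\infty)}$; hence $x\in A\setminus A^{(\infty)}$ while $\s_{f}(x)=\beta+1>\beta$. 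Thus condition (2) of Theorem~\ref{(th2)} fails for this $\beta$, and since $\beta$ was arbitrary it fails for every ordinal. Consequently $\mathcal{U}$ is not set-principal, which proves the proposition with $\mathcal{T}=\mathcal{U}$.

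I do not expect a genuine obstacle here: the real work is already done in Theorem~\ref{(th2)} and Proposition~\ref{prop_SP}. The only two points needing a sentence of justification are (i) that $\mathcal{U}$ really is a retract variety, which is immediate, and (ii) that an element whose degree is an ordinal lies outside $A^{(\infty)}$, which is immediate from the definition of $\s_{f}$ (for each $x$, exactly one of $x\in A^{(\infty)}$ or $x\in A^{(\lambda)}$ for some ordinal $\lambda$ holds). If one prefers a generating proper class that is not literally everything, the same argument applies verbatim to the retract variety generated by $\{A_{\lambda}\colon\lambda\in\ord\}$, where $A_{\lambda}$ is any connected monounary algebra possessing an element of degree $\lambda$; but taking $\mathcal{U}$ itself is the cleanest route.
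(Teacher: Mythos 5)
Your proof is correct and is essentially the paper's argument: both rest on the remark that for every ordinal $\lambda$ some monounary algebra has an element of degree $\lambda$, and both then invoke Theorem~\ref{(th2)} to rule out a uniform bound $\beta$. The only cosmetic difference is that you take $\mathcal{T}=\mathcal{U}$ while the paper takes $\mathbf{V}(\{A_{\alpha}\colon\alpha\in\ord\})$ --- a variant you yourself note at the end --- so there is nothing substantive to add.
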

\begin{proof}
For $\alpha\in\ord$ there exists a (connected) monounary algebra $A_{\alpha}$ and an element $a_{\alpha}\in A_{\alpha}$ such that $\s_{f}(a_{\alpha})=\alpha$. Then Theorem~\ref{(th2)} implies that the retract variety $\mathbf{V}(\{A_{\alpha} \colon \alpha \in \ord \})$ is not set-principal.
\end{proof}

\end{document}